%

\documentclass[11pt,twoside]{amsart}
\usepackage{latexsym,amssymb,amsmath}
\usepackage[all]{xy}
\usepackage{tikz,pgfplots}
\usepackage{extpfeil}
\usepackage{afterpage}
\usepackage{float}
\usepackage{hyperref}

\makeatletter  
\@namedef{subjclassname@2020}{%
\textup{2020} Mathematics Subject Classification}
\makeatother

\textwidth=16.00cm
\textheight=22.00cm
\topmargin=0.00cm
\oddsidemargin=0.00cm 
\evensidemargin=0.00cm
\headheight=0cm
\headsep=1cm
\headsep=0.5cm 
\numberwithin{equation}{section}
\hyphenation{semi-stable}
\setlength{\parskip}{3pt}

\newtheorem{theorem}{Theorem}[section]
\newtheorem{lemma}[theorem]{Lemma}
\newtheorem{proposition}[theorem]{Proposition}
\newtheorem{corollary}[theorem]{Corollary}
\newtheorem{conjecture}[theorem]{Conjecture}

\theoremstyle{definition}
\newtheorem{definition}[theorem]{Definition} 
\newtheorem{procedure}[theorem]{Procedure} 

\newtheorem{remark}[theorem]{Remark}
\newtheorem{example}[theorem]{Example}

\begin{document}

\title[Symbolic Powers]
{Symbolic powers: Simis and weighted monomial ideals}

\thanks{The first author was supported by a scholarship from
CONAHCYT. 
The second author
was partially supported by the Center for Mathematical Analysis, Geometry and
Dynamical Systems of Instituto Superior T\'ecnico, Universidade de
Lisboa. 
The third author was supported by SNII, Mexico.}

\author[F. O. M\'endez]{Fernando O. M\'endez}
\address{
Departamento de
Matem\'aticas\\
Centro de Investigaci\'on y de Estudios
Avanzados del
IPN\\
Apartado Postal
14--740 \\
Ciudad de M\'exico, M\'exico, CP 07000.
}
\email{fomendez@math.cinvestav.mx}

\author[M. Vaz Pinto]{Maria Vaz Pinto}
\address{Departamento de Matem\'atica, Instituto Superior T\'ecnico,
Universidade de Lisboa, Avenida Rovisco Pais, 1, 1049-001 Lisboa,
Portugal.
} 
\email{vazpinto@math.tecnico.ulisboa.pt}

\author[R. H. Villarreal]{Rafael H. Villarreal}
\address{
Departamento de
Matem\'aticas\\
Centro de Investigaci\'on y de Estudios
Avanzados del
IPN\\
Apartado Postal
14--740 \\
Ciudad de M\'exico, M\'exico, CP 07000.
}
\email{vila@math.cinvestav.mx}

\keywords{Monomial ideal, bipartite graph, primary decomposition, integral
closure, edge ideal, weighted oriented graph, clutter, dual, ideal of covers,
normally torsion-free, symbolic power}
\subjclass[2020]{Primary 13C70; Secondary 13A70, 13F20, 05E40, 05C22, 05C25.} 

\dedicatory{Dedicated to Professor Sudhir Ghorpade on the occasion of his
$60$th birthday}  

\begin{abstract}
The aim of this work is to compare symbolic and ordinary powers of monomial ideals
using commutative algebra and combinatorics. Monomial ideals whose
symbolic and ordinary powers coincide are 
called Simis ideals. 
Weighted monomial 
ideals are defined by assigning linear weights to monomials. 
We examine Simis and normally torsion-free ideals, 
relate some of the properties of monomial ideals and
weighted monomial ideals, and present a structure theorem for
edge ideals of $d$-uniform clutters whose ideal of covers is Simis in
degree $d$. 
One of our main results is a combinatorial
classification of when the dual of the edge ideal of a weighted oriented
graph is Simis in degree $2$.  
\end{abstract}

\maketitle 

\section{Introduction}\label{intro-section}
Let $S=K[t_1,\ldots,t_s]$ be a polynomial ring over a field $K$ and
let $I$ be an ideal of $S$. 
A prime ideal $\mathfrak{p}$ of $S$ is called an \textit{associated prime}
of $I$ if
$$(I\colon f)=\mathfrak{p},$$
for some $f\in S$, where 
$(I\colon f):=\{g\in S\mid gf\in I\}$ is
an ideal quotient \cite[p.~8]{AM}. The set of associated primes of $I$
is denoted by ${\rm Ass}(I)$. 

Let $I=\bigcap_{i=1}^r\mathfrak{q}_i$ 
be a \textit{minimal primary decomposition} of $I$ with 
${\rm rad}(\mathfrak{q}_i)=\mathfrak{p}_i$, that is, $\mathfrak{q}_i$
is a $\mathfrak{p}_i$-primary ideal, ${\rm
rad}(\mathfrak{q}_i)\neq{\rm
rad}(\mathfrak{q}_j)$ for $i\neq j$ and
$I\neq\textstyle\bigcap_{i\neq j}\mathfrak{q}_{i}$ for
$j=1,\ldots,r$. The set of associated primes of $I$ 
is equal to $\{\mathfrak{p}_1,\ldots,\mathfrak{p}_r\}$
\cite[Theorem~4.5]{AM}.

An associated prime of
$I$ which properly contains another associated prime of $I$ is
called an \textit{embedded prime} of $I$. 
Let ${\rm minAss}(I)$ be the set of minimal elements of ${\rm Ass}(I)$
(minimal with respect to inclusion), that is, the set of non-embedded
associated primes of $I$. The set ${\rm minAss}(I)$ is equal to 
${\rm Min}(I)$, the minimal primes of $I$ \cite{AM}. If
$\mathfrak{p}_i$ is a minimal prime of $I$ and $S_{\mathfrak{p}_i}$ is the
localization of $S$ at $\mathfrak{p}_i$, then $IS_{\mathfrak{p}_i}\cap
S=\mathfrak{q}_i$ \cite[Proposition~4.9]{AM} and 
$\mathfrak{q}_i$ is called the $\mathfrak{p}_i$-primary component of
$I$. If $\mathfrak{p}_i$ is an embedded prime, $\mathfrak{q}_i$ is
not unique and is called an embedded $\mathfrak{p}_i$-primary 
components of $I$. We say $\mathfrak{q}_1,\ldots,\mathfrak{q}_r$ are the
\textit{primary components} of $I$. 

Given an integer $n\geq 1$,  
the $n$-th {\it symbolic power} of 
$I$, denoted $I^{(n)}$, is the ideal  
$$
I^{(n)}:=\bigcap_{\mathfrak{p}\in{\rm minAss}(I)}
(I^nS_{\mathfrak{p}}\textstyle\bigcap
S),
$$
where $I^nS_{\mathfrak{p}}\textstyle\cap
S$ is the ${\mathfrak p}$-primary component of
$I^n$ (see \cite[p.~484]{simis-trung},
\cite[Definition~3.5.1]{Vas1}). In particular, $I^{(1)}$ is the 
intersection of the non-embedded 
primary components of $I$. An alternative
notion of symbolic 
power can 
be introduced 
using the set ${\rm Ass}(I)$ of associated
primes of $I$ instead 
(see, e.g., \cite{cooper-etal,symbolic-powers-survey}): 
$$
I^{\langle n\rangle}: =\bigcap_{\mathfrak{p}\in {\rm
Ass}(I)}(I^nS_\mathfrak{p}{\textstyle\bigcap
S)}=\bigcap_{\mathfrak{p}\in {\rm maxAss}(I)}
(I^nS_\mathfrak{p}\textstyle\bigcap
S),
$$
where ${\rm maxAss}(I)$ denotes the set of maximal elements 
of ${\rm Ass}(I)$ (maximal with respect to inclusion). 
Clearly $I^n\subset I^{\langle n\rangle}\subset I^{(n)}$. 
If $I$ has no embedded primes, the two definitions $I^{(n)}$ 
and $I^{\langle n\rangle}$ of symbolic powers
coincide. 

There are algorithms, that we use in Appendix~\ref{Appendix}, for
computing the symbolic powers of ideals of $S$ which were developed
and implemented in
\textit{Macaulay}$2$ \cite{mac2} by Drabkin, Grifo, 
Seceleanu and Stone \cite{Grifo-etal}. One of these algorithms uses
the methods of  Eisenbud, Huneke, and Vasconcelos for finding 
primary decompositions of ideals of $S$ \cite{EHV}, see also
\cite{singular,Shimoyama} and references therein.  

The containment problem for ordinary and symbolic
powers of ideals consists of determining the positive integers $n$ and
$k$ for which the containment $I^{(n)}\subset I^k$ holds. 
A major result of Hochster and Huneke
\cite[Theorem~1.1]{Hochster-Huneke} shows that 
$I^{(hn)}\subset I^n$ for all positive integers $n$, where 
$h$ is the bigheight of $I$, that is, the largest height of any associated
prime of $I$. We are interested in classifying the containment 
 $I^{(n)}\subset I^n$ for certain families of  ideals.

The monomials of $S$ are denoted by $t^a:=t_1^{a_1}\cdots t_s^{a_s}$,
$a=(a_1,\dots,a_s)\in\mathbb{N}^s$. We say $I$ is a \textit{monomial
ideal} if $I$ is generated by a finite set of monomials of $S$. We
denote the  minimal set of generators of $I$ consisting of monomials by
$\mathcal{G}(I)$. 
The aim of this work is to compare symbolic and ordinary powers of monomial ideals 
using commutative algebra and combinatorics. 

A monomial ideal $I$ is called a \textit{Simis ideal} 
if $I^{(n)}=I^n$ for all $n\geq 1$ and $I$ is called
\textit{normally torsion-free} if ${\rm Ass}(I^n)\subset{\rm Ass}(I)$ 
for all $n\geq 1$. We say $I$ is \textit{Simis} in degree $n$ if
$I^{(n)}=I^n$. The term \textit{Simis ideal} is introduced to recognize the
pioneering work of Aron Simis on symbolic powers of monomial ideals 
\cite{bahiano,HuSV,aron-hoyos,Aron-bookII,simis-ulrich,ITG}. 

Giving a combinatorial
characterization of Simis ideals is a difficult open problem in this
area. This problem has been solved for squarefree monomial ideals,
that is, for edge ideals of clutters 
\cite[Corollary~3.14]{clutters}, \cite[Theorem~1.4]{hhtz}, 
for edge ideals of graphs \cite[Theorem~5.9]{ITG}, for certain classes
of generalized edge ideals \cite[Theorems~3.6 and 3.10]{Das-K}, for ideals of
covers of graphs \cite{alexdual,reesclu}, and for edge ideals of weighted oriented
graphs \cite[Theorem~3.3]{weighted-symbolic}, \cite[Corollary
3.8]{Mandal-Pradhan}, using methods from combinatorial optimization and
graph theory. If one defines symbolic powers of monomial ideals using
all the associated
primes, classifying the Simis ideals seems to be an even more difficult
problem \cite{Banerjee-etal,Das-K,Mandal-Pradhan,Mandal-Pradhan1}. 

We begin to describe the content of this work.
In Section~\ref{prelim-section}, 
we give some more definitions and present some well-known
results. 

Let $\mathfrak{p}$ be a prime ideal containing an ideal $I$ of $S$. The 
\textit{saturation} of $I$ with respect to $\mathfrak{p}$ 
is the ideal $IS_\mathfrak{p}\cap S$. We say $I$ is 
\textit{saturated} with respect to $\mathfrak{p}$ 
when $I=IS_\mathfrak{p}\cap S$. In Section~\ref{symbolic-section}, we
give some sufficient conditions for an ideal to be 
saturated with respect to a prime ideal that are used to 
study symbolic powers (Propositions~\ref{jan16-24} and
\ref{jan21-24-1}). 

In Section~\ref{weighted-section}, we study weighted monomial 
ideals. For convenience, we now introduce this notion. Let 
$w\colon\mathbb{R}^s\rightarrow\mathbb{R}^s$ be a linear function
such that $w(\mathbb{N}_+^s)\subset\mathbb{N}_+^s$, where $\mathbb{N}_+$ denotes the set 
of positive integers. We can write 
$$
w=(w_1,\ldots,w_s),\ a\mapsto(w_1(a),\ldots,w_s(a)), 
$$
where $w_i\colon\mathbb{R}^s\mapsto\mathbb{R}$ is the composition 
$\pi_i w$ of the $i$-th projection map
$\pi_i$ and $w$. 
To any monomial $t^a$ of $S$,  
we assign the weighted monomial $t^{w(a)}=t_1^{w_1(a)}\cdots
t_s^{w_s(a)}$. We call $w$ a \textit{linear weighting} of $S$. A
linear weighting $w$ of $S$ is called \textit{standard} if there are
positive integers $d_1,\ldots,d_s$ such that
$w(a)=(d_1\pi_1(a),\ldots,d_s\pi_s(a))$ for all $a$. Note that in this
case $t^{w(e_i)}=t_i^{d_i}$, where $e_i$ is the $i$-th
unit vector in $\mathbb{R}^s$, and we may 
assign weight $d_i$ to $t_i$, and use the notation
$w(t_i):=d_i$. 

Let $I\subset S$ be a monomial ideal of $S$ and let
$\mathcal{G}(I):=\{t^{v_1},\ldots,t^{v_q}\}$ be the minimal set of
generators of $I$. The \textit{weighted monomial
ideal} of $I$, denoted $I_w$, is given by
$$ 
I_w:=(\{t^{w(a)}\mid t^a\in I\})=(\{t^{w(v_i)}\mid i=1,\ldots,q\}).
$$
\quad An ideal $L$ of $S$ is called {\it irreducible} if 
$L$ cannot be written as an intersection of two ideals of $S$ that
properly contain $L$.  
According to 
\cite[Theorems~6.1.16 and 6.1.17]{monalg-rev}, there is a 
\textit{unique irreducible decomposition}:
\begin{equation}\label{jun4-21}
I=I_{1}\textstyle\cap\cdots\cap I_m,
\end{equation}
such that each ${I}_{i}$ is an irreducible monomial ideal 
of the form ${I}_i=(\{t_k^{b_k}\vert\, b_k\geq 1\})$ for some 
$b=(b_1,\ldots,b_s)$ in
$\mathbb{N}^s\setminus\{0\}$, where $\mathbb{N}=\{0,1,\ldots\}$, 
and $I\neq\textstyle\bigcap_{i\neq j}I_{i}$ for
$j=1,\ldots,m$. The ideals ${I}_{1},\ldots,{I}_m$
are called the {\it irreducible components\/} of $I$. 

Since irreducible ideals are
primary \cite[Lemma~7.12]{AM}, the irreducible decomposition of $I$ is a 
primary decomposition of $I$.  The irreducible decomposition of $I$
is \textit{minimal} if ${\rm rad}(I_i)\neq{\rm
rad}(I_j)$ for $i\neq j$. For edge ideals of
weighted oriented graphs and for squarefree monomial ideals, their irreducible
decompositions are minimal \cite{WOG,monalg-rev}.

Let $I$ be a monomial ideal of $S$ and let $I=I_1\cap\cdots\cap I_m$ 
be the irreducible decomposition of $I$ 
in Eq.~\eqref{jun4-21} and let
$\{\mathfrak{p}_1,\ldots,\mathfrak{p}_r\}$ be the set of all prime 
ideals $\mathfrak{p}$ of $S$ such that ${\rm rad}(I_j)=\mathfrak{p}$
for some $j$. We set 
$$
\mathfrak{q}_i:=\bigcap_{{\rm rad}(I_j)=\mathfrak{p}_i}I_j,\quad
i=1,\ldots,r.
$$
\quad The ideal $\mathfrak{q}_i$ is a $\mathfrak{p}_i$-primary monomial
ideal and this is the irreducible decomposition of $\mathfrak{q}_i$ for
$i=1,\ldots,r$. The following canonical decomposition
\begin{equation}\label{oct28-23}
I=\mathfrak{q}_1\cap\cdots\cap\mathfrak{q}_r
\end{equation}
is a minimal primary decomposition of $I$ and 
${\rm Ass}(I)=\{\mathfrak{p}_1,\ldots,\mathfrak{p}_r\}$. For
monomial ideals, this is the \textit{minimal primary decomposition} 
of $I$ that we use by default.   
By \cite[Lemma~2]{cm-oriented-trees}, one has the following useful 
expression for the $n$-th symbolic power of $I$: 
$$
I^{(n)} =\bigcap_{\mathfrak{p}_i\in {\rm
minAss}(I)}\hspace{-4mm}\mathfrak{q}_i^n\mbox{ for all }n\geq 1. 
$$

Sayedsadeghi and Nasernejad
\cite{nasernejad-w} studied the behavior of normally torsion-freeness
of monomial ideals under standard linear weightings. Other works 
where standard linear weightings are considered are
\cite{Al-Ayyoub-Nasernejad-cover-ideals,Ehrhart-weighting,cm-oriented-trees,
weighted-symbolic,Mandal-Pradhan,Mandal-Pradhan1,lattice-dim1,WOG,nasernejad-s}. 

Taking powers of
ideals commutes with taking weights, that is, if $w$ is a linear 
weighting, then $(I_w)^n=(I^n)_w$ for all $n\geq 1$
(Lemma~\ref{comm-lw}). For standard weightings, this was proved in 
\cite[Lemma~3.5]{nasernejad-w}. The function $I\mapsto I_w$ is
injective for standard weightings (Lemma~\ref{iw=jw}). 

For standard linear weightings it is known that a monomial ideal $I$
is normally torsion-free if and only  
if $I_w$ is normally torsion-free \cite[Theorem~3.10]{nasernejad-w}. 
One can use the following theorem to obtain a similar result for
Simis ideals (Corollary~\ref{simis-w}(b), cf. \cite[Lemma~1]{jtt1}). 


\noindent \textbf{Theorem~\ref{i-iw-thm}.}\textit{\ Let $I$ be a monomial ideal and let 
$I=\mathfrak{q}_1\cap\cdots\cap\mathfrak{q}_r$ be
the minimal primary decomposition of $I$ in Eq.~\eqref{oct28-23}. If
$w$ is a standard weighting of $S$ and $n\in\mathbb{N}_+$ is a fixed
positive integer, 
then $I^n=\bigcap_{i=1}^r\mathfrak{q}_i^n$ 
if and only if
$(I_w)^n=\bigcap_{i=1}^r((\mathfrak{q}_i)_w)^n$.
}

If $I_w$ is a normal ideal (Definition~\ref{normal-ideal-def}) and $w$ is
a standard linear weighting, then $I$ is a normal ideal 
\cite[Theorem~2.6]{Al-Ayyoub-Nasernejad-cover-ideals} (Proposition~\ref{jan24-24}) but the converse does not hold. 

In Section~\ref{symbolic-monomial-section}, we 
give sufficient conditions for a monomial ideal and its powers 
to be saturated with respect to a prime ideal
(Proposition~\ref{jan20-24}). Then, we study Simis and
normally torsion-free monomial
ideals, and relate some of the properties of $I$ and $I_w$.

If 
$\mathfrak{p}$ is an associated prime of an ideal $I$ of $S$ and
$\mathfrak{q}_1,\ldots,\mathfrak{q}_r$ are the primary components of
$I$, with ${\rm rad}(\mathfrak{q}_i)=\mathfrak{p}_i$, 
following \cite{cooper-etal}, we denote the intersection of all $\mathfrak{q}_i$ with
$\mathfrak{p}_i\subset\mathfrak{p}$ by $\mathfrak{q}_{\subset\mathfrak{p}}$. 
By
Proposition~\ref{dec17-23-coro}, one has
$\mathfrak{q}_{\subset\mathfrak{p}}=IS_\mathfrak{p}\cap S$.

As an application, we recover the following result of Cooper, Embree,
H$\rm \grave{a}$ and Hoefel \cite{cooper-etal}: 
If $I\subset S$ is a monomial ideal and $I=\bigcap_{i=1}^r\mathfrak{q}_i$
is a minimal primary decomposition of $I$ with ${\rm
rad}(\mathfrak{q}_i)=\mathfrak{p}_i$, then the $n$-th symbolic power 
$I^{\langle n\rangle}$ of $I$ relative to ${\rm Ass}(I)$ is given by 
$$
I^{\langle n\rangle}=\bigcap_{\mathfrak{p}_i\in {\rm
maxAss}(I)}\hspace{-4mm}(\mathfrak{q}_{\subset\mathfrak{p}_i})^n\ \mbox{ for all }\ 
n\geq 1,
$$
where $\mathfrak{q}_{\subset\mathfrak{p}_i}=IS_{\mathfrak{p}_i}\cap S$
(Corollary~\ref{dec17-23-2}). 
We characterize Simis ideals in algebraic terms 
and note that normally torsion-free ideals are not Simis in
general (Proposition~\ref{ntf-char-coro}, Example~\ref{ntf-simis-minass}). 
For standard weightings, 
$I$ is a Simis ideal if and only if $I_w$ is a Simis ideal
(Corollary~\ref{simis-w}(b)). 

Let $I$ is a monomial ideal and let $I^{\langle n\rangle}$ be its
$n$-th symbolic defined in terms of ${\rm Ass}(I)$. If ${\rm
Ass}(I^n)\subset{\rm Ass}(I)$ for some $n\geq 1$, then 
$I^{\langle n\rangle}=I^n$
(Proposition~\ref{ntf-implies-simis-all-ass}). 
In particular, if $I$ is normally
torsion-free, then $I^{\langle n\rangle}=I^n$ for all $n\geq 1$ but
the converse fails (Example~\ref{converse-ntf-simis-ass}).

For a certain family of ideals, we present a conjecture of what Simis
ideals should be like in terms 
of weightings, and
show some support for the conjecture
(Conjecture~\ref{conjecture-simis}, Corollary~\ref{char-ntf-w}). 
Powers of Simis ideals are Simis
(Proposition~\ref{jan25-24}).

In Section~\ref{symbolic-squarefree}, we show a structure theorem for
edge ideals of $d$-uniform clutters whose ideal of covers is Simis in
degree $d$ (Theorem~\ref{dual-d-simis}) and give another
algebraic classification of bipartite graphs using the second symbolic power of
ideals of covers of graphs (Proposition~\ref{dec11-23}). For
convenience, we now introduce clutters, and their edge ideals and 
ideals of covers.

Let $\mathcal C$ be a \textit{clutter} with vertex 
set $V(\mathcal{C})=\{t_1,\ldots,t_s\}$, that is, $\mathcal C$ is a 
family of subsets $E(\mathcal{C})$ of $V(\mathcal{C})$, called edges,
none of which is contained in
another \cite{cornu-book}. For example, a graph (no multiple edges or loops) is a
clutter. The
 \textit{edge ideal} of $\mathcal{C}$, denoted $I(\mathcal{C})$, 
is the ideal of $S$ given by 
$$I(\mathcal{C}):=(\textstyle\{\prod_{t_i\in e}t_i\mid e\in
E(\mathcal{C})\}).$$
\quad 
The clutter $\mathcal{C}$ is
called $d$-uniform if $|e|=d$ for all $e\in E(\mathcal{C})$. 
A set of vertices $C$ of $\mathcal{C}$ is called a \textit{vertex
cover} if every edge of $\mathcal{C}$ contains at least one vertex of
$C$. A  \textit{minimal vertex cover} of $\mathcal{C}$ is a vertex cover which
is minimal with respect to inclusion. The ideal of covers of
$\mathcal{C}$, denoted $I_c(\mathcal{C})$, is generated by all
$\prod_{t_i\in C}t_i$ such that $C$ is a minimal vertex cover of
$\mathcal{C}$ \cite[p.~221]{monalg-rev}.

The paper of Huneke,
Simis and Vasconcelos \cite{HuSV} was first used in \cite{Lisboar,reesclu,clutters} to study 
combinatorial problems of clutters using symbolic powers of 
edge ideals of clutters, see also \cite{Seceleanu-packing} and 
the survey papers \cite{symbolic-powers-survey,wolmer-survey}.
A breakthrough in the area of edge ideals is a theorem relating 
symbolic powers and the max-flow min-cut property of integer
programming \cite[Corollary~3.14]{clutters}, cf. \cite[Theorem
1.4]{hhtz}, creating a bridge between algebra and optimization
problems. A famous conjecture of Conforti--Cornu\'ejols
\cite{CC} from combinatorial optimization, 
known as the packing problem for clutters, 
was shown to be equivalent to the equality of ordinary and symbolic
powers of edge ideals \cite[Conjecture~3.10]{clutters}, 
\cite[Theorem~4.6]{reesclu}. To the best of our knowledge the 
conjecture is still unsolved. 
 
We come to one of our main results.

\noindent \textbf{Theorem~\ref{dual-d-simis}.}\textit{\ 
Let $\mathcal{C}$ be a $d$-uniform clutter without isolated vertices. If
$I_c(\mathcal{C})^{(d)}=I_c(\mathcal{C})^{d}$, then there are 
mutually disjoint minimal vertex covers $C_1,\ldots,C_d$ of
$\mathcal{C}$ such that $V( \mathcal{C})=\bigcup_{i=1}^d C_i$ and
every edge of $\mathcal{C}$ has the form 
$e=\{t_{i_1},\ldots,t_{i_d}\}$, where $t_{i_j}\in C_j$ for all $j$.
}

The converse of Theorem~\ref{dual-d-simis} holds if $d=2$, that is, if
the clutter $\mathcal{C}$ is a graph 
(Proposition~\ref{dec11-23}) but it fails in general
(Example~\ref{dual-d-simis-ce}). If $G$ is a graph and $I_c(G)$ is
its 
ideal of covers, we show that 
$I_c(G)^{(2)}=I_c(G)^2$ if and only if $G$ is bipartite 
(Proposition~\ref{dec11-23}). The associated primes of $I_c(G)^2$ 
were determined by  
Francisco, H$\rm \grave{a}$ and Van Tuyl \cite{fhv}, they
showed that $\mathfrak{p}$ is an associated prime of $I_c(G)^2$ if an
only if $\mathfrak{p}$ is generated by the vertices of an edge of $G$
or by  the vertices forming an induced 
odd cycle of $G$. The minimal generators of
$\mathcal{R}_s(I_c(G))=\bigoplus_{n=0}^{\infty}I_c(G)^{(n)}t^n$, the
symbolic Rees algebra of $I_c(G)$, were determined by Dupont and the
third author \cite{covers}. We can use these results to give
alternative proofs of Proposition~\ref{dec11-23}. 

In Section~\ref{symbolic-dual-section}, we classify combinatorially 
when the dual of
the edge ideal of a weighted oriented graph is a Simis ideal in degree
$2$ (Theorem~\ref{J2=J(2)}). For convenience we briefly 
introduce weighted oriented graphs and the dual of their edge ideals.   

Let $G$ be a simple graph with vertex set $V(G)=\{t_1,\ldots,t_s\}$ and edge
set $E(G)$. Let $D$ be a {\it weighted oriented graph\/} whose {\it underlying
graph\/} is $G$, that is, $D$ is a triplet $(V(D),E(D),w)$ where $V(D)=V(G)$,
$E(D)\subset V(D)\times V(D)$ such that $E(G)$ is the set of
pairs $\{t_i,t_j\}$ with $(t_i,t_j)\in E(D)$,  
$|E(D)|=|E(G)|$, and  
$w\colon V(D) \to\mathbb{N}_+$ is a \textit{weight function}.  In other words, $D$ is obtained
from $G$ by assigning a direction to its edges and a weight to its
vertices. The \textit{vertex set} of $D$ and the \textit{edge set} of $D$
are $V(D)$ and $E(D)$, respectively. The \emph{weight} of $t_i\in V(D)$
is $w(t_i)$ and is denoted simply by $w_i$. The set of vertices
$\{t_i\in V(D)\mid w_i>1\}$ is denoted by
$V^{+}(D)$. The
 \textit{edge ideal} of $D$, introduced in \cite{cm-oriented-trees,WOG}, 
is the ideal of $S$ given by 
$$I(D):=(\{t_{i}t_{j}^{w_j}\mid (t_{i},t_{j})\in E(D)\}).$$
\quad If $w_i=1$ for each $t_i\in V(D)$, then $I(D)$ is the usual edge
ideal $I(G)$ of the graph $G$ \cite{cm-graphs} The
motivation to study $I(D)$ comes from coding theory, see
\cite[p.~536]{oriented-graphs} and \cite[p.~1]{WOG}. If a vertex $t_i$ of
$D$ is a {\it source\/} (i.e., a vertex 
with only outgoing edges) we shall always 
assume that $w_i=1$ because in this case the definition of $I(D)$
 does not depend on the weight of $t_i$ (Remark~\ref{nov30-21}). A
 \emph{sink} vertex of $D$ is a vertex with only incoming edges. This
 notion will play a role in some of our main results. If all
 vertices of $V^+(D)$ are sinks, then $I(D)$ is obtained from $I(G)$ 
by making the change of variables $t_i\rightarrow t_i^{w_i}$ for
$i=1,\ldots,s$.

Following
\cite[p.~495]{cm-oriented-trees}, we define the 
\textit{dual} $J(D)$ of the edge ideal $I(D)$ as: 
$$
J(D):=\bigcap_{(t_i,t_j)\in E(D)}
(t_i,t_j^{w_j}),
$$
and this is the irreducible decomposition of $J(D)$. 
If $w_i=1$ for
all $i$, then $J(D)$ is the ideal of covers $I_c(G)$ of $G$ and we
denote $J(D)$ by $J(G)$.  

If $G$ is a graph, 
then $J(G)$ is Simis if and only if $G$ is bipartite
\cite{alexdual}, \cite[Corollary~3.17, Theorem~4.6,
Proposition~4.27]{reesclu}. 
By \cite[Theorem~3.2]{weighted-symbolic}, 
$I(D)^2=I(D)^{(2)}$ if and only if 
every vertex in $V^+(D)$ is a sink and the underlying graph $G$ of
$D$ has no triangles. 

We come to our main result.

\noindent \textbf{Theorem~\ref{J2=J(2)}.}\textit{\ 
Let $D$ be a weighted oriented graph and let $J(D)$ be the dual of 
the edge ideal $I(D)$ of $D$. Then, $J(D)^2=J(D)^{(2)}$ if and only if the following
two conditions hold:
\begin{enumerate}
\item[{\rm (i)}] Every vertex in $V^+(D)$ is a sink.
\item[{\rm (ii)}] The underlying 
graph $G$ of $D$ is bipartite.
\end{enumerate}
}

The ideal $I(D)$ is Simis if and only if every vertex in $V^+(D)$ is
a sink and $G$ is a bipartite graph
\cite[Theorem~3.3]{weighted-symbolic}, \cite[Corollary
3.8]{Mandal-Pradhan}. If $J(D)^2=J(D)^{(2)}$, then
$I(D)^2=I(D)^{(2)}$ (Theorems~\ref{J2=J(2)} and \ref{In=I(n)}) but the
converse does not hold (Example~\ref{converseI2}).

If the ideal $I(D)$ has no embedded primes, we give
some classifications of the normally torsion-freeness of $J(D)$ 
(Corollary~\ref{ntf-weighted}).
The normality of $J(D)$ is related to the Simis property of $J(D)$. 
If $J(D)^n=J(D)^{(n)}$ for all $n\geq
1$, then $J(D)$ is a normal ideal
\cite[Corollary~4.2]{weighted-symbolic}. If $I$ is the edge ideal of
a clutter and $I$ is Simis, then $I$ is normal \cite[Corollary~5.3]{ITG}.

In Section~\ref{examples-section}, we present examples related to 
some of our results. Then, in Appendix~\ref{Appendix}, we give the
procedures for \textit{Macaulay}$2$ \cite{mac2} that are used in the examples 
to compute symbolic powers, primary and irreducible
decompositions, and saturations of monomial ideals.  

For unexplained
terminology and additional information,  we refer to 
\cite{graphs,Herzog-Hibi-book,edge-ideals,Sta5,chapter-vantuyl,monalg-rev}
for the theory of edge ideals and 
\cite{AM,Eisen} for 
commutative algebra.   

\section{Preliminaries}\label{prelim-section}
In this section we give some definitions and present some well-known
results that will be used in the following sections.  
To avoid repetitions, we continue to employ 
the notations and
definitions used in Section~\ref{intro-section}.

\begin{definition}
An ideal $\mathfrak{q}$ in $S$ is \textit{primary} if
$\mathfrak{q}\neq S$ and if $xy\in \mathfrak{q}$, $x,y\in S$, implies
that either $x\in\mathfrak{q}$ of $y^n\in\mathfrak{q}$ for some 
$n\geq 1$.
\end{definition}

\begin{proposition}\cite[Proposition~4.8]{AM}\label{dec17-23}
Let $R$ be a multiplicatively closed subset of $S$ and let
$\mathfrak{q}$ be a $\mathfrak{p}$-primary ideal. The following hold.

{\rm(i)} If $R\cap \mathfrak{p}\neq\emptyset$, then
$R^{-1}\mathfrak{q}=R^{-1}S$.

{\rm(ii)} If $R\cap\mathfrak{p}=\emptyset$, then $R^{-1}\mathfrak{q}$ is
$R^{-1}\mathfrak{p}$-primary and its contraction in $S$ is
$\mathfrak{q}$.
\end{proposition}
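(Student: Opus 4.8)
The plan is to prove the two parts directly from the definition that $\mathfrak{q}$ is $\mathfrak{p}$-primary (so $\mathfrak{q}\neq S$, $\mathrm{rad}(\mathfrak{q})=\mathfrak{p}$, and $xy\in\mathfrak{q}$ with $x\notin\mathfrak{q}$ forces $y^{n}\in\mathfrak{q}$ for some $n\geq 1$), using nothing beyond the elementary description of $R^{-1}S$. For (i), I would pick $s\in R\cap\mathfrak{p}$. Since $s\in\mathrm{rad}(\mathfrak{q})$ there is $n\geq 1$ with $s^{n}\in\mathfrak{q}$, and $s^{n}\in R$ because $R$ is multiplicatively closed. Hence $R^{-1}\mathfrak{q}$ contains $s^{n}/1$, which is a unit of $R^{-1}S$ with inverse $1/s^{n}$, so $R^{-1}\mathfrak{q}=R^{-1}S$.

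For (ii) assume $R\cap\mathfrak{p}=\emptyset$. First I would compute the contraction $(R^{-1}\mathfrak{q})^{c}:=\{x\in S: x/1\in R^{-1}\mathfrak{q}\}$. The inclusion $\mathfrak{q}\subseteq(R^{-1}\mathfrak{q})^{c}$ is immediate. Conversely, if $x/1\in R^{-1}\mathfrak{q}$, write $x/1=q/u$ with $q\in\mathfrak{q}$ and $u\in R$; then there is $v\in R$ with $v(ux-q)=0$, hence $(vu)x=vq\in\mathfrak{q}$. Now $vu\in R$, so $vu\notin\mathfrak{p}=\mathrm{rad}(\mathfrak{q})$, i.e.\ no power of $vu$ lies in $\mathfrak{q}$; since $\mathfrak{q}$ is primary and $(vu)x\in\mathfrak{q}$, this forces $x\in\mathfrak{q}$. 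Thus $(R^{-1}\mathfrak{q})^{c}=\mathfrak{q}$, and in particular $R^{-1}\mathfrak{q}\neq R^{-1}S$ since $1\notin\mathfrak{q}$.

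Next I would show $R^{-1}\mathfrak{q}$ is primary. The previous paragraph shows that, for $y\in S$, one has $y/1\in R^{-1}\mathfrak{q}$ iff $y\in\mathfrak{q}$; and for $a\in S$, $s\in R$, one has $a/s\in R^{-1}\mathfrak{q}$ iff $a/1\in R^{-1}\mathfrak{q}$ (multiply by the unit $s/1$), hence iff $a\in\mathfrak{q}$. Now suppose $(a/s)(b/t)\in R^{-1}\mathfrak{q}$ with $a/s\notin R^{-1}\mathfrak{q}$. Applying the previous remark to $ab/(st)$ gives $ab\in\mathfrak{q}$, while $a\notin\mathfrak{q}$; since $\mathfrak{q}$ is primary, $b^{n}\in\mathfrak{q}$ for some $n\geq 1$, whence $(b/t)^{n}=b^{n}/t^{n}\in R^{-1}\mathfrak{q}$. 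So $R^{-1}\mathfrak{q}$ is primary. Finally, since radicals commute with localization, $\mathrm{rad}(R^{-1}\mathfrak{q})=R^{-1}\mathrm{rad}(\mathfrak{q})=R^{-1}\mathfrak{p}$, and $R^{-1}\mathfrak{p}$ is a prime ideal of $R^{-1}S$ exactly because $\mathfrak{p}\cap R=\emptyset$; therefore $R^{-1}\mathfrak{q}$ is $R^{-1}\mathfrak{p}$-primary.

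The only step requiring care is the denominator-clearing in the contraction computation, where the auxiliary element $v\in R$ appears and where one uses both that $R$ is multiplicatively closed (so $vu\in R$) and the hypothesis $R\cap\mathfrak{p}=\emptyset$ (so $vu\notin\mathrm{rad}(\mathfrak{q})$); the remaining steps are formal, and part (i) is immediate. So I do not anticipate a genuine obstacle.
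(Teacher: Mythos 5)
Your proof is correct and complete; the paper itself gives no argument for this proposition, citing it directly as \cite[Proposition~4.8]{AM}, and your argument is essentially the standard one found there (unit from a power of $s\in R\cap\mathfrak{p}$ for (i); denominator-clearing plus the primary condition to compute the contraction, then transporting the primary property and the radical through localization for (ii)). All the delicate points — that $vu\in R$ stays outside $\mathrm{rad}(\mathfrak{q})=\mathfrak{p}$, and that membership of $a/s$ in $R^{-1}\mathfrak{q}$ reduces to $a\in\mathfrak{q}$ — are handled correctly.
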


\begin{proposition}\cite[Proposition~4.9]{AM}\label{dec17-23-coro}
Let $I\subset S$ be an ideal and let $I=\bigcap_{i=1}^r\mathfrak{q}_i$
be a minimal primary decomposition of $I$ with ${\rm
rad}(\mathfrak{q}_i)=\mathfrak{p}_i$. If $\mathfrak{p}\in {\rm
Ass}(I)$, then 
$$
IS_\mathfrak{p}=\bigcap_{\mathfrak{p}_i\subset\mathfrak{p}}
\mathfrak{q}_iS_\mathfrak{p}\ \mbox{ and}\ 
IS_\mathfrak{p}\cap S=\bigcap_{\mathfrak{p}_i\subset\mathfrak{p}}
\mathfrak{q}_i,
$$
and these are minimal primary decompositions.
\end{proposition}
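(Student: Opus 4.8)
The plan is to reduce the statement to Proposition~\ref{dec17-23} by localizing the given minimal primary decomposition $I=\bigcap_{i=1}^r\mathfrak{q}_i$ at the multiplicatively closed set $R=S\setminus\mathfrak{p}$ and then contracting back to $S$. First I would invoke the standard fact that localization commutes with finite intersections of ideals, so that $IS_\mathfrak{p}=\bigcap_{i=1}^r\mathfrak{q}_iS_\mathfrak{p}$. For each $i$ exactly one of the two alternatives of Proposition~\ref{dec17-23} applies: if $\mathfrak{p}_i\not\subset\mathfrak{p}$ then $R\cap\mathfrak{p}_i\neq\emptyset$, so part~(i) gives $\mathfrak{q}_iS_\mathfrak{p}=S_\mathfrak{p}$ and this factor disappears from the intersection; if $\mathfrak{p}_i\subset\mathfrak{p}$ then $R\cap\mathfrak{p}_i=\emptyset$, so part~(ii) says $\mathfrak{q}_iS_\mathfrak{p}$ is $\mathfrak{p}_iS_\mathfrak{p}$-primary with contraction $\mathfrak{q}_iS_\mathfrak{p}\cap S=\mathfrak{q}_i$. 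This gives $IS_\mathfrak{p}=\bigcap_{\mathfrak{p}_i\subset\mathfrak{p}}\mathfrak{q}_iS_\mathfrak{p}$, and, since contraction (preimage under $S\to S_\mathfrak{p}$) commutes with intersections, $IS_\mathfrak{p}\cap S=\bigcap_{\mathfrak{p}_i\subset\mathfrak{p}}(\mathfrak{q}_iS_\mathfrak{p}\cap S)=\bigcap_{\mathfrak{p}_i\subset\mathfrak{p}}\mathfrak{q}_i$. The index set $\{i:\mathfrak{p}_i\subset\mathfrak{p}\}$ is nonempty because $\mathfrak{p}$ contains $I$, hence contains some $\mathfrak{p}_i$; so both displayed intersections are proper primary decompositions, each surviving factor being primary, with radical $\mathfrak{p}_iS_\mathfrak{p}$ upstairs and $\mathfrak{p}_i$ downstairs.

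It remains to check minimality. Distinctness of the radicals $\mathfrak{p}_i$ with $\mathfrak{p}_i\subset\mathfrak{p}$ is part of the hypothesis, and distinctness of the radicals $\mathfrak{p}_iS_\mathfrak{p}$ follows from the order-preserving bijection between the primes of $S_\mathfrak{p}$ and the primes of $S$ contained in $\mathfrak{p}$. For irredundancy I would argue upstairs first. Suppose $\bigcap_{i\neq j,\,\mathfrak{p}_i\subset\mathfrak{p}}\mathfrak{q}_iS_\mathfrak{p}\subset\mathfrak{q}_jS_\mathfrak{p}$ for some $j$ with $\mathfrak{p}_j\subset\mathfrak{p}$. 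Since $I=\bigcap_{i=1}^r\mathfrak{q}_i$ is minimal, there is $x\in\bigcap_{i\neq j}\mathfrak{q}_i$ with $x\notin\mathfrak{q}_j$; then $x\in\bigcap_{i\neq j,\,\mathfrak{p}_i\subset\mathfrak{p}}\mathfrak{q}_i\subset\bigcap_{i\neq j,\,\mathfrak{p}_i\subset\mathfrak{p}}\mathfrak{q}_iS_\mathfrak{p}\subset\mathfrak{q}_jS_\mathfrak{p}$, so $x\in\mathfrak{q}_jS_\mathfrak{p}\cap S=\mathfrak{q}_j$ by Proposition~\ref{dec17-23}(ii), a contradiction. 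Hence $IS_\mathfrak{p}=\bigcap_{\mathfrak{p}_i\subset\mathfrak{p}}\mathfrak{q}_iS_\mathfrak{p}$ is minimal. Finally, if $\bigcap_{i\neq j,\,\mathfrak{p}_i\subset\mathfrak{p}}\mathfrak{q}_i\subset\mathfrak{q}_j$ for some such $j$, then applying $(-)S_\mathfrak{p}$ and using that localization preserves inclusions and intersections would give $\bigcap_{i\neq j,\,\mathfrak{p}_i\subset\mathfrak{p}}\mathfrak{q}_iS_\mathfrak{p}\subset\mathfrak{q}_jS_\mathfrak{p}$, contradicting the minimality just established; so the contracted decomposition is minimal as well.

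The only genuinely non-formal point is this last minimality argument, that is, keeping straight the transfer of (ir)redundancy between $S$ and $S_\mathfrak{p}$; everything else is bookkeeping with Proposition~\ref{dec17-23} and the exactness of localization. I would also remark that the hypothesis $\mathfrak{p}\in{\rm Ass}(I)$ is used only to guarantee $\mathfrak{p}\supseteq I$, so the same proof works verbatim for any prime $\mathfrak{p}$ containing $I$.
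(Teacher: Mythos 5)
Your proof is correct and follows essentially the same route as the paper: localize the minimal primary decomposition at $\mathfrak{p}$, apply Proposition~\ref{dec17-23} to discard the components with $\mathfrak{p}_i\not\subset\mathfrak{p}$ and contract the rest back to the $\mathfrak{q}_i$. The only difference is that you spell out the minimality verification (distinct radicals and irredundancy upstairs and downstairs), which the paper leaves implicit in ``the result follows by localizing.''
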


\begin{proof} Setting $R=S\setminus\mathfrak{p}$, by
Proposition~\ref{dec17-23}, one has: 
{\rm(i)} if $\mathfrak{p}_i\not\subset\mathfrak{p}$, then
$\mathfrak{q}_iS_\mathfrak{p}=S_\mathfrak{p}$, and {\rm(ii)} 
if $\mathfrak{p}_i\subset\mathfrak{p}$, then
$\mathfrak{q}_iS_\mathfrak{p}$ is
$\mathfrak{p}_iS_\mathfrak{p}$-primary and
$\mathfrak{q}_iS_\mathfrak{p}\cap S=\mathfrak{q}_i$. Hence, the
result follows by localizing the primary decomposition of 
$I$ at $\mathfrak{p}$.
\end{proof}

\begin{proposition}
An ideal $\mathfrak{q}$ in $S$ is {primary} if and only 
if ${\rm rad}(\mathfrak{q})$ is prime and 
$\mathfrak{q}S_\mathfrak{p}\cap S=\mathfrak{q}$, where 
$\mathfrak{p}={\rm rad}(\mathfrak{q})$.
\end{proposition}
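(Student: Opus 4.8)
The plan is to deduce the equivalence by descending to and ascending from the localization $S_{\mathfrak p}$, where $\mathfrak p:={\rm rad}(\mathfrak q)$, and using the correspondence between $\mathfrak p$-primary ideals and their localizations recorded in Proposition~\ref{dec17-23}.

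For the forward implication, suppose $\mathfrak q$ is primary. A standard computation shows ${\rm rad}(\mathfrak q)$ is prime: if $xy\in{\rm rad}(\mathfrak q)$ then $x^ny^n\in\mathfrak q$ for some $n$, and if $x^n\notin\mathfrak q$ then a power of $y^n$ lies in $\mathfrak q$, so $y\in{\rm rad}(\mathfrak q)$; moreover ${\rm rad}(\mathfrak q)\neq S$ since $\mathfrak q\neq S$. Write $\mathfrak p={\rm rad}(\mathfrak q)$, so that $\mathfrak q$ is $\mathfrak p$-primary. Take $R=S\setminus\mathfrak p$, which is multiplicatively closed and, because $\mathfrak p$ is prime, satisfies $R\cap\mathfrak p=\emptyset$. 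By part {\rm(ii)} of Proposition~\ref{dec17-23}, $R^{-1}\mathfrak q=\mathfrak q S_\mathfrak{p}$ is $\mathfrak p S_\mathfrak{p}$-primary and its contraction to $S$ equals $\mathfrak q$; that is, $\mathfrak q S_\mathfrak{p}\cap S=\mathfrak q$, as required.

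For the converse, suppose $\mathfrak p:={\rm rad}(\mathfrak q)$ is prime and $\mathfrak q S_\mathfrak{p}\cap S=\mathfrak q$. First note $\mathfrak q\neq S$, for otherwise ${\rm rad}(\mathfrak q)=S$ would not be prime. Since forming radicals commutes with localization, ${\rm rad}(\mathfrak q S_\mathfrak{p})=\mathfrak p S_\mathfrak{p}$, which is the maximal ideal of the local ring $S_\mathfrak{p}$. An ideal whose radical is a maximal ideal $\mathfrak m$ is primary: in the quotient every element is a unit or nilpotent (its unique prime is $\mathfrak m$ modulo the ideal), so $\bar x\bar y=0$ forces $\bar x=0$ or a power of $\bar y$ to vanish. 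Hence $\mathfrak q S_\mathfrak{p}$ is a primary ideal of $S_\mathfrak{p}$. Finally, the contraction of a primary ideal along any ring homomorphism is primary: if $xy$ lies in the contraction and $x$ does not, then the image of a power of $y$ lies in the primary ideal, so a power of $y$ lies in the contraction, and the contraction is proper since $1$ is not in the primary ideal. Applying this to the localization map $S\to S_\mathfrak{p}$ shows $\mathfrak q=\mathfrak q S_\mathfrak{p}\cap S$ is primary.

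The argument is essentially routine; the only point worth care is that in the converse the saturation hypothesis $\mathfrak q S_\mathfrak{p}\cap S=\mathfrak q$ is genuinely needed and cannot be dropped — for instance $(t_1^2,t_1t_2)\subset K[t_1,t_2]$ has prime radical $(t_1)$ but is not primary, precisely because its saturation with respect to $(t_1)$ is $(t_1)$, strictly larger than $(t_1^2,t_1t_2)$. So the main thing to keep straight while writing this up is the distinct roles of the two hypothesized conditions and the correct choice of multiplicatively closed set when invoking Proposition~\ref{dec17-23}.
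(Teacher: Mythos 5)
Your proof is correct, and the forward direction is essentially the paper's: both reduce the saturation equality to the localization correspondence for $\mathfrak{p}$-primary ideals (the paper cites Proposition~\ref{dec17-23-coro}, which itself rests on Proposition~\ref{dec17-23}(ii), the statement you invoke directly). The converse is where you genuinely diverge. The paper gives a short element chase: take $xy\in\mathfrak{q}$ with $x\notin\mathfrak{q}$; if $y\notin\mathfrak{p}$ then $x=(xy)/y\in\mathfrak{q}S_\mathfrak{p}\cap S=\mathfrak{q}$, a contradiction, so $y\in\mathfrak{p}={\rm rad}(\mathfrak{q})$ and a power of $y$ lies in $\mathfrak{q}$. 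You instead pass to $S_\mathfrak{p}$, observe that ${\rm rad}(\mathfrak{q}S_\mathfrak{p})=\mathfrak{p}S_\mathfrak{p}$ is the maximal ideal of the local ring so that $\mathfrak{q}S_\mathfrak{p}$ is primary, and then contract. Both arguments are sound; the paper's is shorter and uses nothing beyond the saturation hypothesis itself, while yours is more structural — it isolates exactly where each hypothesis enters (primeness of the radical makes the localized ideal primary, saturation makes $\mathfrak{q}$ the contraction of that primary ideal) at the cost of invoking two standard facts (an ideal whose radical is maximal is primary, and contractions of primary ideals are primary). Your closing example $(t_1^2,t_1t_2)$, showing the saturation hypothesis cannot be dropped, is a correct and worthwhile observation not made in the paper.
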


\begin{proof} $\Rightarrow$) From the 
definition of primary ideal, we get that ${\rm rad}(\mathfrak{q})$ 
is prime. The equality $\mathfrak{q}S_\mathfrak{p}\cap
S=\mathfrak{q}$ follows from Proposition~\ref{dec17-23-coro}. 

$\Leftarrow$)  Take $x,y\in S$ such that $xy\in\mathfrak{q}$ and
assume that $x\notin\mathfrak{q}$. If $y\notin\mathfrak{p}$, then
$x=(xy)/y\in \mathfrak{q}S_\mathfrak{p}\cap
S$, and consequently $x\in\mathfrak{q}$, a contradiction. 
Thus, $y\in \mathfrak{p}={\rm rad}(\mathfrak{q})$, and
$y^n\in\mathfrak{q}$ for some $n\geq 1$. 
\end{proof}

The \textit{support} of a monomial $t^a=t_1^{a_1}\cdots t_s^{a_s}$, denoted 
${\rm supp}(t^a)$, is the set of all $t_i$ such that $a_i\neq 0$. 
The next result has no analogue for 
graded primary ideals of $S$ (Example~\ref{jan21-24}).

\begin{proposition}\cite[Proposition~6.1.7]{monalg-rev}\label{stanprim-iff}
A monomial ideal $\mathfrak{q}\subset S$ is 
primary if and only if, after permutation of the 
variables $t_1,\ldots,t_s$ in $S$, $\mathfrak{q}$ has the form:
\[
\mathfrak{q}=(t_1^{a_1},\ldots,t_k^{a_k},t^{b_1},\ldots,t^{b_\ell}),
\]
where $a_i\geq 1$ and 
$\cup_{i=1}^\ell{\rm supp}(t^{b_i})\subset \{t_1,\ldots,t_k\}$. 
\end{proposition}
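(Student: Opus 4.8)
The plan is to prove both implications directly from the combinatorial description of monomial primary ideals given in Proposition~\ref{stanprim-iff}, together with the characterization of primary ideals via localization that was just established. For the forward implication, suppose $\mathfrak{q}\subset S$ is a primary monomial ideal and let $\mathfrak{p}={\rm rad}(\mathfrak{q})$. First I would observe that $\mathfrak{p}$ is a monomial prime, hence generated by a subset of the variables; after permuting the $t_i$, write $\mathfrak{p}=(t_1,\ldots,t_k)$. Since $\sqrt{\mathfrak{q}}=\mathfrak{p}$, for each $i\le k$ some power $t_i^{a_i}$ lies in $\mathfrak{q}$; choose $a_i$ minimal. Each other minimal generator $t^{b}$ of $\mathfrak{q}$ is a monomial whose radical lies in $\mathfrak{p}$, so ${\rm supp}(t^b)\subset\{t_1,\ldots,t_k\}$ — the point being that if $t^b$ involved a variable $t_j$ with $j>k$, then writing $t^b=t_j\cdot(t^b/t_j)$ one factor is $t_j\notin\mathfrak{q}$ (as $t_j\notin\mathfrak{p}$, so no power of $t_j$ is in $\mathfrak{q}$) and the other is $t^b/t_j\notin\mathfrak{q}$ by minimality of $t^b$ in $\mathcal{G}(\mathfrak{q})$, contradicting primariness. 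This yields exactly the stated normal form.

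For the converse, suppose after permutation $\mathfrak{q}=(t_1^{a_1},\ldots,t_k^{a_k},t^{b_1},\ldots,t^{b_\ell})$ with $a_i\ge 1$ and $\bigcup_i{\rm supp}(t^{b_i})\subset\{t_1,\ldots,t_k\}$. I would first check that ${\rm rad}(\mathfrak{q})=(t_1,\ldots,t_k)=:\mathfrak{p}$, which is immediate: every generator is a monomial supported in $\{t_1,\ldots,t_k\}$ so $\mathfrak{q}\subset\mathfrak{p}$ and $\sqrt{\mathfrak{q}}\subset\mathfrak{p}$, while $t_i^{a_i}\in\mathfrak{q}$ gives $t_i\in\sqrt{\mathfrak{q}}$. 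Then I verify primariness: take $x,y\in S$ with $xy\in\mathfrak{q}$ and $x\notin\mathfrak{q}$; I must show $y\in\mathfrak{p}$ (equivalently $y^n\in\mathfrak{q}$ for some $n$). Because $\mathfrak{q}$ is monomial it suffices to treat $x=t^c$, $y=t^d$ monomials. If $y=t^d\notin\mathfrak{p}$ then $d$ is supported away from $\{t_1,\ldots,t_k\}$; since every generator of $\mathfrak{q}$ is supported in $\{t_1,\ldots,t_k\}$, divisibility $t^g\mid t^ct^d$ for a generator $t^g$ forces $t^g\mid t^c$, so $x=t^c\in\mathfrak{q}$, a contradiction. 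Hence $y\in\mathfrak{p}=\sqrt{\mathfrak{q}}$, as required.

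Alternatively — and this is cleaner if I want to invoke the previously stated criterion — I can prove the converse by checking that $\mathfrak{q}S_{\mathfrak{p}}\cap S=\mathfrak{q}$ for $\mathfrak{p}=(t_1,\ldots,t_k)$. For a monomial ideal, localizing at $\mathfrak{p}$ inverts all variables outside $\{t_1,\ldots,t_k\}$, and $\mathfrak{q}S_{\mathfrak{p}}\cap S$ is the monomial ideal whose generators are the images $t^g/(\text{part of }t^g\text{ outside }\{t_1,\ldots,t_k\})$ of the generators of $\mathfrak{q}$; but each generator of $\mathfrak{q}$ is already supported in $\{t_1,\ldots,t_k\}$, so this saturation does nothing and $\mathfrak{q}S_{\mathfrak{p}}\cap S=\mathfrak{q}$. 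Combined with ${\rm rad}(\mathfrak{q})=\mathfrak{p}$ being prime, the proposition characterizing primary ideals by localization (stated just above) gives that $\mathfrak{q}$ is primary. I expect the only mild subtlety — hardly an obstacle — to be the bookkeeping that a monomial localization contracts back to the monomial ideal obtained by deleting the "inverted" variables from each generator; this is standard (it is essentially how saturations of monomial ideals are computed in the Appendix), and once it is in hand both directions are short.
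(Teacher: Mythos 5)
The paper does not prove this statement at all: it is quoted verbatim from \cite[Proposition~6.1.7]{monalg-rev} and used as a black box, so there is no in-paper argument to compare yours against. Your proof is the standard textbook one and is essentially correct. The forward direction is clean: writing $\mathfrak{p}={\rm rad}(\mathfrak{q})=(t_1,\ldots,t_k)$, extracting minimal pure powers $t_i^{a_i}$, and ruling out a variable $t_j$ with $j>k$ in a minimal generator $t^b$ by applying the primary condition to $(t^b/t_j)\cdot t_j$ is exactly the right argument. In the converse, the one step you should not wave through is ``because $\mathfrak{q}$ is monomial it suffices to treat $x=t^c$, $y=t^d$ monomials'': the equivalence of the monomial-witness condition with the full primary condition for arbitrary polynomials is itself a lemma requiring proof (e.g., via showing every associated prime $(\mathfrak{q}:t^c)$ must equal ${\rm rad}(\mathfrak{q})$ under the monomial condition, or via a nonzerodivisor/McCoy argument in $S/\mathfrak{q}\cong(K[t_1,\ldots,t_k]/\mathfrak{q}_0)[t_{k+1},\ldots,t_s]$). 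Your alternative route through the localization criterion stated just before the proposition is the cleaner one in the context of this paper: it only needs ${\rm rad}(\mathfrak{q})=(t_1,\ldots,t_k)$ together with $\mathfrak{q}S_{\mathfrak{p}}\cap S=\mathfrak{q}$, and the latter is precisely the content of Propositions~\ref{jan16-24} and \ref{jan20-24} (whose proofs do not rely on \ref{stanprim-iff}, so there is no circularity). Either way the argument closes; just be aware that the ``reduce to monomials'' reduction and the ``saturation deletes inverted variables'' fact are the two places where the actual work of the cited textbook proof lives.
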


\begin{lemma}{\rm(\cite[Lemma~2]{cm-oriented-trees},
\cite[Lemma~3.1]{herzog-hibi-trung})} 
\label{anoth-one-char-spow-general} 
Let $I$ be a monomial ideal of $S$. If 
$\mathfrak{q}_1,\ldots,\mathfrak{q}_p$ are the primary components
corresponding to the  minimal primes 
of $I$, then 
$$
I^{(n)}={\mathfrak q}_1^{n}\textstyle\cap\cdots\cap {\mathfrak q}_p^{n}\ 
\mbox{ for all }\ n\geq 1.
$$
\end{lemma}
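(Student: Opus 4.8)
\textbf{Proof proposal for Lemma~\ref{anoth-one-char-spow-general}.}
The plan is to reduce the statement to the definition of the $n$-th symbolic power together with Proposition~\ref{dec17-23-coro}. First I would recall that, by the canonical minimal primary decomposition in Eq.~\eqref{oct28-23}, we may write $I=\mathfrak{q}_1\cap\cdots\cap\mathfrak{q}_r$ with ${\rm rad}(\mathfrak{q}_i)=\mathfrak{p}_i$, where after reindexing $\mathfrak{q}_1,\ldots,\mathfrak{q}_p$ are exactly the primary components attached to the minimal primes $\mathfrak{p}_1,\ldots,\mathfrak{p}_p\in{\rm minAss}(I)$. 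By definition,
$$
I^{(n)}=\bigcap_{\mathfrak{p}\in{\rm minAss}(I)}\bigl(I^nS_\mathfrak{p}\cap S\bigr)
=\bigcap_{i=1}^p\bigl(I^nS_{\mathfrak{p}_i}\cap S\bigr),
$$
so it suffices to show that $I^nS_{\mathfrak{p}_i}\cap S=\mathfrak{q}_i^n$ for each $i=1,\ldots,p$.

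Next I would analyze the right-hand side locally. Since taking $n$-th powers and intersections do not in general commute, I will not expand $I^n$; instead I use that $I^n=(\mathfrak{q}_1\cap\cdots\cap\mathfrak{q}_r)^n\subset\mathfrak{q}_1^n\cap\cdots\cap\mathfrak{q}_r^n$ always holds, and that each $\mathfrak{q}_j^n$ is again $\mathfrak{p}_j$-primary (a power of a primary monomial ideal has the same radical; for monomial ideals this is immediate from Proposition~\ref{stanprim-iff}, or one invokes the general fact that powers of primary ideals with prime radical are primary). Localizing at $\mathfrak{p}_i$ with $R=S\setminus\mathfrak{p}_i$, Proposition~\ref{dec17-23} gives $\mathfrak{q}_j^nS_{\mathfrak{p}_i}=S_{\mathfrak{p}_i}$ whenever $\mathfrak{p}_j\not\subset\mathfrak{p}_i$. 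Because $\mathfrak{p}_i$ is a \emph{minimal} prime of $I$, the only $j$ with $\mathfrak{p}_j\subset\mathfrak{p}_i$ is $j=i$ itself. Hence $I^nS_{\mathfrak{p}_i}=\mathfrak{q}_i^nS_{\mathfrak{p}_i}$: the inclusion $\subset$ follows from $I^n\subset\mathfrak{q}_i^n$, and the reverse inclusion follows because localization is exact and $\mathfrak{q}_i^nS_{\mathfrak{p}_i}=\bigcap_{j=1}^r\mathfrak{q}_j^nS_{\mathfrak{p}_i}\supset (\bigcap_{j=1}^r\mathfrak{q}_j^n)S_{\mathfrak{p}_i}\supset\dots$ — more cleanly, one shows directly that the canonical decomposition of $I^n$ (or of any monomial ideal between $I^n$ and $\bigcap\mathfrak{q}_j^n$ with the same associated primes) localizes so that only the $\mathfrak{p}_i$-primary part survives, and that part is $\mathfrak{q}_i^n$. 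Then intersecting back, $I^nS_{\mathfrak{p}_i}\cap S=\mathfrak{q}_i^nS_{\mathfrak{p}_i}\cap S=\mathfrak{q}_i^n$, the last equality by Proposition~\ref{dec17-23}(ii) since $\mathfrak{q}_i^n$ is $\mathfrak{p}_i$-primary.

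Finally I would assemble the pieces: substituting $I^nS_{\mathfrak{p}_i}\cap S=\mathfrak{q}_i^n$ into the displayed formula for $I^{(n)}$ yields $I^{(n)}=\mathfrak{q}_1^n\cap\cdots\cap\mathfrak{q}_p^n$, as claimed, for every $n\geq 1$. The main obstacle is the step $I^nS_{\mathfrak{p}_i}\cap S=\mathfrak{q}_i^n$: one has to argue carefully that, although $I^n$ may have embedded primes and its $\mathfrak{p}_i$-primary component is a priori only well-defined via the localization, the contraction of $I^nS_{\mathfrak{p}_i}$ is precisely $\mathfrak{q}_i^n$ and not something strictly larger. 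The key point making this work is the minimality of $\mathfrak{p}_i$ among the primes of $I$ (so no other $\mathfrak{q}_j^n$ contributes after localizing) combined with the fact that $\mathfrak{q}_i^n$ is still $\mathfrak{p}_i$-primary, so Proposition~\ref{dec17-23-coro} applies to the decomposition $I^n\subset\mathfrak{q}_1^n\cap\cdots\cap\mathfrak{q}_r^n$ after one checks these two ideals agree after localizing at $\mathfrak{p}_i$ — which holds because locally $I^n S_{\mathfrak{p}_i} = \mathfrak{q}_i^n S_{\mathfrak{p}_i}$, as every other factor becomes the unit ideal.
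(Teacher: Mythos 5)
Your overall strategy is the paper's: reduce to showing $I^nS_{\mathfrak{p}_i}\cap S=\mathfrak{q}_i^n$ for each minimal prime $\mathfrak{p}_i$, note that $\mathfrak{q}_i^n$ is still $\mathfrak{p}_i$-primary (via Proposition~\ref{stanprim-iff}), and finish with Proposition~\ref{dec17-23}(ii). However, the one step that carries the content of the lemma --- the inclusion $\mathfrak{q}_i^nS_{\mathfrak{p}_i}\subset I^nS_{\mathfrak{p}_i}$ --- is not actually established in your write-up. The chain you display, $\mathfrak{q}_i^nS_{\mathfrak{p}_i}=\bigcap_j\mathfrak{q}_j^nS_{\mathfrak{p}_i}\supset(\bigcap_j\mathfrak{q}_j^n)S_{\mathfrak{p}_i}\supset I^nS_{\mathfrak{p}_i}$, only reproduces the trivial direction, and the fallback claim that ``the canonical decomposition of $I^n$ localizes so that only the $\mathfrak{p}_i$-primary part survives, and that part is $\mathfrak{q}_i^n$'' is circular: identifying the $\mathfrak{p}_i$-primary component of $I^n$ with $\mathfrak{q}_i^n$ is precisely what must be proved. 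Your closing justification (``every other factor becomes the unit ideal'') applies to the intersection $\bigcap_j\mathfrak{q}_j^n$, not to $I^n$, which is in general strictly smaller.

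The missing ingredient is the standard fact that localization commutes with taking powers of ideals, so that $I^nS_{\mathfrak{p}_i}=(IS_{\mathfrak{p}_i})^n$. Combining this with $IS_{\mathfrak{p}_i}=\mathfrak{q}_iS_{\mathfrak{p}_i}$ --- which follows from Proposition~\ref{dec17-23-coro} applied to the decomposition $I=\bigcap_j\mathfrak{q}_j$ together with the minimality of $\mathfrak{p}_i$, exactly as you observe --- gives
$$
I^nS_{\mathfrak{p}_i}=(IS_{\mathfrak{p}_i})^n=(\mathfrak{q}_iS_{\mathfrak{p}_i})^n=\mathfrak{q}_i^nS_{\mathfrak{p}_i},
$$
and contracting to $S$ using that $\mathfrak{q}_i^n$ is $\mathfrak{p}_i$-primary yields $I^nS_{\mathfrak{p}_i}\cap S=\mathfrak{q}_i^n$. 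This is exactly how the paper closes the argument; with that one line inserted in place of your attempted justification, your proof is complete and coincides with the paper's.
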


\begin{proof} By Proposition~\ref{stanprim-iff}, it follows that $\mathfrak{q}_i^n$ is
a $\mathfrak{p}_i$-primary ideal for all $i$. Then, applying 
Propositions~\ref{dec17-23-coro} and \ref{dec17-23}, one has
$$
I^{(n)}=\bigcap_{i=1}^p(I^nS_{\mathfrak{p}_i}\cap
S)=\bigcap_{i=1}^p((IS_{\mathfrak{p}_i})^n\cap S)
=\bigcap_{i=1}^p((\mathfrak{q}_iS_{\mathfrak{p}_i})^n\cap S)=
\bigcap_{i=1}^p(\mathfrak{q}_i^nS_{\mathfrak{p}_i}\cap
S)=\bigcap_{i=1}^p\mathfrak{q}_i^n,
$$
and the proof is complete.
\end{proof}

\begin{lemma}{\rm(cf.~\cite[Proposition 3.8]{nasernejad-w})} Let $L,I_1,\ldots,I_m$ be
irreducible primary monomial ideals of $S$. If\/
$\bigcap_{i=1}^mI_i\subset L$, then $I_i\subset L$ for some $i$.
\end{lemma}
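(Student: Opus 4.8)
The plan is to argue by contradiction, using the explicit form of irreducible monomial ideals recalled in and after Eq.~\eqref{jun4-21}: after permuting the variables $t_1,\ldots,t_s$ we may write $L=(t_1^{b_1},\ldots,t_p^{b_p})$ for some $1\le p\le s$ and positive integers $b_1,\ldots,b_p$. The only elementary fact I need is the membership criterion for monomials in such an ideal: a monomial $t^a$ lies in $L$ if and only if $t_\ell^{b_\ell}\mid t^a$ for some $\ell\in\{1,\ldots,p\}$; equivalently, $t^a\notin L$ precisely when $a_\ell<b_\ell$ for \emph{every} $\ell=1,\ldots,p$ (see \cite[Chapter~6]{monalg-rev}).

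Assume, for contradiction, that $I_i\not\subset L$ for every $i=1,\ldots,m$. Since each $I_i$ is a monomial ideal, $I_i\not\subset L$ forces some element of $\mathcal{G}(I_i)$ to lie outside $L$; fix such a monomial $t^{a_i}\in I_i\setminus L$ for each $i$. By the criterion above, $(a_i)_\ell<b_\ell$ for all $\ell=1,\ldots,p$ and all $i=1,\ldots,m$.

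Next I consider the least common multiple $t^{c}:=\operatorname{lcm}(t^{a_1},\ldots,t^{a_m})$, whose exponent vector satisfies $c_k=\max_{1\le i\le m}(a_i)_k$ for each $k$. Since $t^{a_i}\mid t^{c}$ and $I_i$ is a monomial ideal, $t^{c}\in I_i$ for every $i$, hence $t^{c}\in\bigcap_{i=1}^m I_i\subset L$. On the other hand, for each $\ell=1,\ldots,p$ we have $c_\ell=\max_i(a_i)_\ell<b_\ell$, so $t_\ell^{b_\ell}\nmid t^{c}$ for every $\ell$; by the membership criterion this gives $t^{c}\notin L$, contradicting $t^{c}\in L$. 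Therefore $I_i\subset L$ for some $i$, as claimed.

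A remark on where the hypotheses are used: the argument only needs $L$ to be an irreducible \emph{monomial} ideal and the $I_i$ to be monomial ideals — neither the irreducibility nor the primariness of the $I_i$ plays any role — and the sole structural input is the pure‑power description of $L$. Consequently the proof presents no genuine obstacle; the only points requiring (routine) care are the monomial membership criterion for $L$ and the fact that the exponents of a monomial lcm are computed coordinatewise as maxima, so that a common divisibility witness for the $t^{a_i}$ escaping $L$ is itself an element of $\bigcap_i I_i$ escaping $L$.
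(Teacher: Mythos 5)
Your proof is correct, and it is essentially the argument the paper is pointing to: the paper gives no details and simply defers to the proofs of Proposition~6.1.16 and Theorem~6.1.17 in the cited monograph, where the same device is used — write the irreducible ideal $L$ as $(t_1^{b_1},\ldots,t_p^{b_p})$, pick a monomial of each $I_i$ outside $L$, and observe that their lcm lies in $\bigcap_i I_i$ but still has every relevant exponent below $b_\ell$. Your closing remark is also accurate: irreducibility and primariness of the $I_i$ are not needed (indeed, irreducible monomial ideals are automatically primary, so that hypothesis is redundant even for $L$), and your version is self-contained where the paper's is not.
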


\begin{proof} This follows from the proofs of
\cite[Proposition~6.1.16, Theorem~6.1.17]{monalg-rev}.
\end{proof}

\begin{lemma}\cite[p.~208]{monalg-rev}\label{intofmonoismono} 
Let $I$ and $J$ be two ideals generated by finite sets of 
monomials $\mathcal{G}(I)$ and $\mathcal{G}(J)$, respectively. 
Then, the intersection $I\cap J$ is generated by 
the set 
$$
\{{\rm lcm}(f,g)\mid f\in \mathcal{G}(I)\mbox{ and } g\in
\mathcal{G}(J)\},
$$
where ${\rm lcm}(f,g)$ denotes the least common multiple of $f$ and $g$.
\end{lemma} 

\begin{remark}\label{nov30-21} 
Let $D = (V(D),E(D),w)$ be a weighted oriented graph
with vertex set $V(D)=\{t_1,\ldots,t_s\}$, underlying graph $G$, and
edge ideal $I(D)$.
Consider the weighted
oriented graph $D' = (V(D),E(D),w')$ with $w'(t_i) = 1$ if $t_i$ is a
\textit{source} vertex and $w'(t_i) =
w(t_i)$ if $t_i$ is not a source vertex. 
Then, $I(D') = I(D)$, that is, $I(D)$ does not depend on the weights
that we place at source vertices. For this reason we will always 
assume that all sources of $D$ have weight $1$.
\end{remark}

\begin{lemma}\cite[p.~169]{Vas1}\label{icd}
If $I$ is a monomial ideal of $S$ and $n\in\mathbb{N}_+$, then the
integral closure $\overline{I^n}$ of $I^n$ is given by 
\begin{equation*}
\overline{I^n}=(\{t^a\in S\mid (t^a)^{p}\in I^{pn}
\mbox{ for some }p\geq 1\}).
\end{equation*}
\end{lemma}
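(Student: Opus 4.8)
The plan is to prove the two inclusions separately; write $L=(\{t^a\in S\mid (t^a)^p\in I^{pn}\ \text{for some}\ p\geq 1\})$ for the ideal on the right-hand side. The inclusion $L\subseteq\overline{I^n}$ is immediate: if $t^a$ is one of the displayed generators, say $(t^a)^p\in I^{pn}=(I^n)^p$, then $X^p-(t^a)^p=0$ is an equation of integral dependence of $t^a$ over $I^n$, since its coefficient in degree $X^{p-i}$ lies in $(I^n)^i$ for every $i$ — trivially, being $0$, for $i<p$, and equal to $-(t^a)^p\in(I^n)^p$ for $i=p$. Hence each generator of $L$ lies in $\overline{I^n}$, and since $\overline{I^n}$ is an ideal, $L\subseteq\overline{I^n}$.

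For the reverse inclusion I would first observe that $\overline{I^n}$ is a monomial ideal: the torus $(K^{*})^s$ acts on $S$ by scaling the variables, this action fixes the monomial ideal $I^n$ and therefore carries equations of integral dependence over $I^n$ to equations of the same shape, so it fixes $\overline{I^n}$; an ideal invariant under this action is monomial (split it into $\mathbb{Z}^s$-graded components). It then suffices to show that every monomial $t^a\in\overline{I^n}$ lies in $L$. Fix such a $t^a$ and an equation of integral dependence $(t^a)^m+c_1(t^a)^{m-1}+\cdots+c_m=0$ with $c_i\in(I^n)^i=I^{ni}$, and rewrite it as $(t^a)^m=-\sum_{i=1}^m c_i(t^a)^{m-i}$. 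The left-hand side is a single monomial with coefficient $1$, so the monomial $(t^a)^m$ must occur with nonzero coefficient in some summand $c_i(t^a)^{m-i}$, for an index $i\in\{1,\ldots,m\}$. Writing $c_i=\sum_b f_b\,b$ with $b$ ranging over the monomials of $I^{ni}$ and $f_b\in S$, a monomial of $c_i(t^a)^{m-i}$ has the form $\nu\,b\,(t^a)^{m-i}$ for some monomial $\nu$ in the support of $f_b$; equating $\nu\,b\,(t^a)^{m-i}=(t^a)^m$ forces $\nu\,b=(t^a)^i$, hence $b\mid(t^a)^i$ and so $(t^a)^i\in(b)\subseteq I^{ni}=I^{pn}$ with $p:=i\geq 1$. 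Thus $t^a\in L$, and therefore $\overline{I^n}\subseteq L$.

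I expect the only genuine subtlety to be the monomial-cancellation step in the second paragraph, where one passes from a polynomial identity to a divisibility relation among monomials: the point is that the left-hand side of the rearranged equation is a single monomial, so it cannot cancel and must survive in one of the terms on the right. The fact that the integral closure of a monomial ideal is again monomial is standard (see \cite{monalg-rev}), and the torus argument above is one quick justification; alternatively one could invoke the Newton-polyhedron description of the integral closure of a monomial ideal and clear denominators in a rational convex-combination-plus-translate representation of $a$ as a point of the Newton polyhedron of $I^n$, which yields the same conclusion.
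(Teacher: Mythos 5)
Your proof is correct. The paper itself gives no argument for this lemma---it is quoted directly from Vasconcelos's book \cite[p.~169]{Vas1}---and what you write is exactly the standard proof: the trivial integral equation $X^p-(t^a)^p$ for one inclusion, and for the other the reduction to monomials followed by the observation that the single monomial $(t^a)^m$ must survive uncancelled in some term $c_i(t^a)^{m-i}$, forcing $(t^a)^i\in I^{ni}$. The only point worth a footnote is that your torus-action justification of the monomiality of $\overline{I^n}$ needs $K$ infinite (or a base change to $\overline{K}$); since you also defer to \cite{monalg-rev} and to the Newton-polyhedron description for that standard fact, this is not a gap.
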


\begin{definition}\label{normal-ideal-def}
A monomial ideal $I$ is \textit{normal} if $\overline{I^n}=I^n$ for 
all $n\geq 1$. 
\end{definition}
 
\section{Symbolic powers of ideals}\label{symbolic-section}
In this section, we give some sufficient conditions for an ideal to be
saturated with respect to a prime ideal, that will be used to 
study symbolic powers. To avoid repetitions, we continue to employ 
the notations and definitions used in Sections~\ref{intro-section}
and 
\ref{prelim-section}.

\begin{proposition}\label{jan16-24} Let $I\subset S$ be an ideal and
let $\mathfrak{p}\subset S$ be a prime ideal. If $\mathfrak{p}$ 
contains all associated primes of $I$, then $IS_\mathfrak{p}\cap S=I$.
\end{proposition}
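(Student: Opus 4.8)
The plan is to use the minimal primary decomposition of $I$ together with Proposition~\ref{dec17-23} to localize term by term. Write a minimal primary decomposition $I=\bigcap_{i=1}^r\mathfrak{q}_i$ with $\mathrm{rad}(\mathfrak{q}_i)=\mathfrak{p}_i$, so that $\mathrm{Ass}(I)=\{\mathfrak{p}_1,\ldots,\mathfrak{p}_r\}$. The hypothesis says $\mathfrak{p}_i\subset\mathfrak{p}$ for every $i$. Setting $R=S\setminus\mathfrak{p}$, for each $i$ we have $R\cap\mathfrak{p}_i=\emptyset$ (since $\mathfrak{p}_i\subset\mathfrak{p}$), so Proposition~\ref{dec17-23}(ii) applies and gives $\mathfrak{q}_iS_\mathfrak{p}\cap S=\mathfrak{q}_i$.

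Next I would intersect these and use that localization and contraction commute with finite intersections: $IS_\mathfrak{p}\cap S=\bigl(\bigcap_{i=1}^r\mathfrak{q}_i\bigr)S_\mathfrak{p}\cap S=\bigcap_{i=1}^r(\mathfrak{q}_iS_\mathfrak{p}\cap S)=\bigcap_{i=1}^r\mathfrak{q}_i=I$. In fact this is precisely the statement of Proposition~\ref{dec17-23-coro} in the special case where \emph{all} the $\mathfrak{p}_i$ are contained in $\mathfrak{p}$: that proposition yields $IS_\mathfrak{p}\cap S=\bigcap_{\mathfrak{p}_i\subset\mathfrak{p}}\mathfrak{q}_i$, and under our hypothesis the index set is all of $\{1,\ldots,r\}$, so the right-hand side is $I$.

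So the cleanest write-up is: apply Proposition~\ref{dec17-23-coro} with $\mathfrak{p}$ the given prime (noting $\mathfrak{p}\in\mathrm{Ass}(I)$ is not needed — one only needs $\mathfrak{p}$ to contain $I$ so that $R\cap\mathrm{rad}(I)=\emptyset$ fails to cause trouble; more precisely $IS_\mathfrak{p}\neq S_\mathfrak{p}$ holds because $\mathfrak{p}\supset I$). There is essentially no obstacle here: the only mild subtlety is that Proposition~\ref{dec17-23-coro} as stated assumes $\mathfrak{p}\in\mathrm{Ass}(I)$, so if one wants to invoke it verbatim one should instead argue directly from Proposition~\ref{dec17-23}(ii) applied to each $\mathfrak{q}_i$, which needs only $R\cap\mathfrak{p}_i=\emptyset$, i.e. $\mathfrak{p}_i\subset\mathfrak{p}$ — and that is exactly the hypothesis. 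The inclusion $I\subset IS_\mathfrak{p}\cap S$ is automatic, and the reverse inclusion follows from the displayed chain of equalities.
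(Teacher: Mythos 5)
Your argument is correct. The only thing to make explicit is the middle equality $\bigl(\bigcap_{i=1}^r\mathfrak{q}_i\bigr)S_\mathfrak{p}\cap S=\bigcap_{i=1}^r(\mathfrak{q}_iS_\mathfrak{p}\cap S)$, which rests on the standard fact that localization commutes with finite intersections (contraction commutes with arbitrary ones); once that is granted, Proposition~\ref{dec17-23}(ii) applied to each $\mathfrak{q}_i$ (legitimate since $\mathfrak{p}_i\subset\mathfrak{p}$, i.e.\ $R\cap\mathfrak{p}_i=\emptyset$) finishes the proof, and you are right that one should not quote Proposition~\ref{dec17-23-coro} verbatim because its statement assumes $\mathfrak{p}\in\mathrm{Ass}(I)$. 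The paper's own proof takes a slightly different, more elementary route: it avoids the localization machinery entirely and does an element chase --- take $f=g/h\in IS_\mathfrak{p}\cap S$ with $h\notin\mathfrak{p}$, so $hf\in Q_i$ for each primary component $Q_i$; if $f\notin Q_i$ then $h\in P_i\subset\mathfrak{p}$, a contradiction. That argument is just Proposition~\ref{dec17-23}(ii) unpacked by hand and sidesteps the commutation-with-intersections step, whereas your version is shorter and makes clear that the proposition is really the ``all components survive localization'' case of the decomposition lemma. Your parenthetical remark about only needing $\mathfrak{p}\supset I$ is harmless but unnecessary: the hypothesis $\mathfrak{p}_i\subset\mathfrak{p}$ for all $i$ already forces $I\subset\mathfrak{p}$, and nothing in either proof needs to rule out $IS_\mathfrak{p}=S_\mathfrak{p}$ separately.
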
 

\begin{proof} Clearly $IS_\mathfrak{p}\cap S\supset I$. To show
the reverse inclusion take $f\in IS_\mathfrak{p}\cap S$. Then, $f=g/h$,
$g\in I$, $h\notin\mathfrak{p}$. Thus, $hf\in I$. Pick a minimal primary
decomposition $I=\bigcap_{i=1}^\ell Q_i$, where $Q_i$ is a
$P_i$-primary ideal for $i=1,\ldots,\ell$. Then, $hf\in\ Q_i$ for
all $i$. If $f\notin Q_i$ for some $i\in\{1,\ldots,\ell\}$, 
then $hf\in Q_i$ and consequently $h^p\in Q_i$ for some $p\geq 1$.
Hence, $h\in P_i\subset\mathfrak{p}$, a contradiction. This proves that $f\in Q_i$ for
all $i$, that is, $f\in I$.
\end{proof}

\begin{corollary}\cite[Lemma~2.13]{weighted-symbolic}\label{feb11-24}
Let $I\subset S$ be a graded
ideal. The following hold.
\begin{enumerate}
\item[(a)] If $\mathfrak{m}=(t_1,\ldots,t_s)$ is the irrelevant
maximal ideal of $S$, then $IS_\mathfrak{m}\cap S=I$;
\item[(b)] If $\mathfrak{m}\in{\rm Ass}(I)$, then 
$I^{\langle n\rangle}=I^nS_\mathfrak{m}\cap S=I^n$ for all $n\geq 1$.
\end{enumerate}
\end{corollary}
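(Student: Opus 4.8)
The plan is to derive Corollary~\ref{feb11-24} as a direct consequence of Proposition~\ref{jan16-24}, whose statement we may already use. For part~(a), note that for a graded ideal $I$ every associated prime $\mathfrak{p}_i$ is itself a graded ideal (associated primes of a graded ideal are graded, \cite[Lemma~1.5.6]{Herzog-Hibi-book}), and a proper graded ideal of $S$ is contained in the irrelevant maximal ideal $\mathfrak{m}=(t_1,\ldots,t_s)$. Hence $\mathfrak{m}$ contains all associated primes of $I$, and Proposition~\ref{jan16-24} applied with $\mathfrak{p}=\mathfrak{m}$ gives $IS_\mathfrak{m}\cap S=I$.

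For part~(b), first observe that since $I$ is graded, so is $I^n$ for every $n\geq 1$, so part~(a) applied to $I^n$ yields $I^nS_\mathfrak{m}\cap S=I^n$. It remains to identify this common value with $I^{\langle n\rangle}$. Recall $I^{\langle n\rangle}=\bigcap_{\mathfrak{p}\in{\rm maxAss}(I)}(I^nS_\mathfrak{p}\cap S)$. Since every associated prime of $I$ is graded and hence contained in $\mathfrak{m}$, and $\mathfrak{m}\in{\rm Ass}(I)$ by hypothesis, the maximal element of ${\rm Ass}(I)$ with respect to inclusion is $\mathfrak{m}$ itself; that is, ${\rm maxAss}(I)=\{\mathfrak{m}\}$. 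Therefore the intersection defining $I^{\langle n\rangle}$ has a single term, namely $I^nS_\mathfrak{m}\cap S$, and combining with the previous sentence gives $I^{\langle n\rangle}=I^nS_\mathfrak{m}\cap S=I^n$.

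There is essentially no obstacle here; the only point requiring a little care is the assertion that the associated primes of a graded ideal are graded (so that they all sit inside $\mathfrak{m}$), which is a standard fact and the key reason part~(a) works. One could alternatively phrase part~(b) without invoking ${\rm maxAss}$ at all: the chain $I^n\subset I^{\langle n\rangle}\subset I^{(n)}$ from the introduction, together with $I^{\langle n\rangle}\subset I^nS_\mathfrak{m}\cap S$ (as $\mathfrak{m}$ is one of the associated primes over which the intersection runs) and $I^nS_\mathfrak{m}\cap S=I^n$ from part~(a), already forces all these ideals to coincide. I would present the argument via ${\rm maxAss}(I)=\{\mathfrak{m}\}$ since it makes the middle equality $I^{\langle n\rangle}=I^nS_\mathfrak{m}\cap S$ transparent.
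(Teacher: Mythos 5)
Your proposal is correct and follows essentially the same route as the paper: both reduce part~(a) to showing that every associated prime of the graded ideal $I$ lies in $\mathfrak{m}$ (the paper via a graded primary decomposition and taking radicals, you via the standard fact that associated primes of graded ideals are graded) and then invoke Proposition~\ref{jan16-24}, and both deduce part~(b) from ${\rm maxAss}(I)=\{\mathfrak{m}\}$ together with part~(a) applied to $I^n$.
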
 

\begin{proof} (a) Pick a minimal primary
decomposition $I=\bigcap_{i=1}^\ell Q_i$, where $Q_i$ is a
graded $P_i$-primary ideal for $i=1,\ldots,\ell$
\cite[Theorem~2.2.8]{monalg-rev}. Then,
$Q_i\subset\mathfrak{m}$  and consequently
$P_i\subset\mathfrak{m}$ for all $i$. Hence, by
Proposition~\ref{jan16-24}, one has $IS_\mathfrak{m}\cap S=I$.

(b) Note that ${\rm maxAss}(I)=\{\mathfrak{m}\}$. Then, by part (a), we
get $I^{\langle n\rangle}=I^nS_\mathfrak{m}\cap 
S =I^n$.
\end{proof}

\begin{proposition}\label{jan21-24-1} Let $I\subset S$ be an ideal
generated by polynomials $f_1,\ldots,f_q$ in $K[V]$ for 
some $V\subset\{t_1,\ldots,t_s\}$ and let 
$\mathfrak{N}=(V)$ be the ideal of $S$ generated by $V$. 
If all the associated primes of $I$ are generated by monomials, then 
any associated prime of $I$ is contained in
$\mathfrak{N}$ and $IS_\mathfrak{N}\cap S=I$. 
\end{proposition}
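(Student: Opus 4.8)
The plan is to reduce everything to Proposition~\ref{jan16-24}. That proposition tells us that if a prime $\mathfrak{p}$ contains every associated prime of $I$, then $IS_\mathfrak{p}\cap S=I$. So the entire content of the statement is the claim that, under the hypothesis that $I$ is generated by polynomials in $K[V]$ and that all associated primes of $I$ are monomial, every associated prime of $I$ is contained in $\mathfrak{N}=(V)$. Once that is established, the equality $IS_\mathfrak{N}\cap S=I$ is immediate from Proposition~\ref{jan16-24} applied with $\mathfrak{p}=\mathfrak{N}$.

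First I would fix an associated prime $\mathfrak{P}$ of $I$ and note that by hypothesis $\mathfrak{P}$ is generated by a subset of the variables, say $\mathfrak{P}=(t_i\mid i\in A)$ for some $A\subset\{1,\dots,s\}$. I want to show $A\subset\{j\mid t_j\in V\}$, equivalently $\mathfrak{P}\subset\mathfrak{N}$. The key observation is that $I\subset K[V]S$, so $I$ is contained in the extension $\mathfrak{N}\cdot S$ of the monomial prime $\mathfrak{N}$; hence $\mathfrak{N}$ is a prime ideal containing $I$, and every minimal prime of $I$ is contained in $\mathfrak{N}$. That handles the minimal primes, but embedded primes need an extra argument. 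For this I would argue via localization at the complementary multiplicative set: put $T=\{t_j\mid t_j\notin V\}$ and let $U$ be the multiplicative set generated by $T$. Since the generators $f_1,\dots,f_q$ of $I$ lie in $K[V]$, none of the variables outside $V$ appears in them, and one checks that $U^{-1}I\cap S=I$ (this is the ``saturation'' statement; concretely, if $u f\in I$ with $u$ a monomial in the variables outside $V$, then writing $f$ in terms of monomials and comparing degrees in those outside variables forces $f\in I$, because multiplying by $u$ cannot cancel or create outside-variable content in the $K[V]$-ideal $I$). Therefore $\mathrm{Ass}(I)=\mathrm{Ass}(U^{-1}I)$ pulled back to $S$, and every associated prime of $I$ is disjoint from $U$, i.e. contains no variable outside $V$; since each such prime is monomial, it is generated by variables in $V$, hence contained in $\mathfrak{N}$.

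With that in hand, Proposition~\ref{jan16-24} gives $IS_\mathfrak{N}\cap S=I$, completing the proof. The main obstacle I anticipate is making the saturation claim $U^{-1}I\cap S=I$ clean: the cleanest route is probably to avoid degree-chasing altogether and instead invoke the fact that each associated prime $\mathfrak{P}$ has the form $(I:h)=\mathfrak{P}$ for some $h\in S$, combined with the minimal primary decomposition $I=\bigcap Q_i$ into monomial primary components (which exists because all associated primes are monomial, so one may take the monomial primary decomposition); then $\mathfrak{P}=\mathrm{rad}(Q_i)$ for some $i$, and since $I\subset\mathfrak{N}$ one has $Q_i\subset\mathfrak{N}$ after possibly intersecting, forcing $\mathrm{rad}(Q_i)\subset\mathfrak{N}$. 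Either way the essential point is just that an ideal generated inside $K[V]$ cannot have an associated prime involving an outside variable, and the rest is Proposition~\ref{jan16-24}.
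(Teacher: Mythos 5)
Your main line of argument is correct and has the same two-step structure as the paper's proof: show that every associated prime lies in $\mathfrak{N}$, then invoke Proposition~\ref{jan16-24}. Where you differ is in how you prove the containment. The paper takes $\mathfrak{p}=(I:f)$, supposes $t_\ell\in\mathcal{G}(\mathfrak{p})\setminus V$, writes $t_\ell f=\sum_i(a_{i,1}+t_\ell a_{i,2})f_i$ with each $a_{i,1}$ free of $t_\ell$, substitutes $t_\ell=0$ to get $\sum_i a_{i,1}f_i=0$, cancels $t_\ell$ to conclude $f\in I$, hence $1\in\mathfrak{p}$, a contradiction. You instead localize at the multiplicative set $U$ generated by the variables outside $V$, prove the saturation $U^{-1}I\cap S=I$, and read off from the behaviour of $\mathrm{Ass}$ under localization that every associated prime misses $U$. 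Both proofs rest on the same core fact --- that $uf\in I$ with $u$ a monomial in the outside variables forces $f\in I$ --- and your justification of it (view $S$ as a free $K[V]$-module on the monomials in the outside variables, so $I=I_0S$ consists of the polynomials all of whose $K[V]$-coefficients lie in $I_0=(f_1,\dots,f_q)K[V]$, and multiplying by $u$ merely shifts those coefficients) is a clean, valid alternative to the substitution trick. You could also shortcut your final step: once $uf\in I\Rightarrow f\in I$ is known, an element $u\in\mathfrak{p}\cap U$ with $\mathfrak{p}=(I:f)$ immediately gives $f\in I$ and $\mathfrak{p}=S$, exactly as in the paper, with no need for the localization theory of associated primes.

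Two side remarks in your write-up are wrong, though neither is needed for the main argument. First, from ``$\mathfrak{N}$ is a prime containing $I$'' you cannot conclude that \emph{every} minimal prime of $I$ is contained in $\mathfrak{N}$; a prime containing $I$ contains \emph{some} minimal prime of $I$, not all of them (e.g.\ $(xy)\subset(x)$, yet $(y)\not\subset(x)$). Second, the ``cleaner route'' you sketch at the end does not work: $I$ need not be a monomial ideal, so there is no monomial primary decomposition to invoke, and $I\subset\mathfrak{N}$ does not force any individual primary component $Q_i$ to be contained in $\mathfrak{N}$. Keep the localization/saturation argument and drop those two claims.
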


\begin{proof} Let $\mathfrak{p}$ be an associated prime of $I$, that is, 
$\mathfrak{p}=(I\colon f)$ for some $f\in S$. Note that
$\mathcal{G}(\mathfrak{p})$, the minimal generating set of
$\mathfrak{p}$, is a subset of $\{t_1,\ldots,t_s\}$
because $\mathfrak{p}$ is a monomial prime ideal of $S$. To show
$\mathfrak{p}\subset\mathfrak{N}$, it suffices to show
the inclusion $\mathcal{G}(\mathfrak{p})\subset V$ because $V\subset\mathfrak{N}$.
We argue by
contradiction assuming that there is
$t_\ell\in\mathcal{G}(\mathfrak{p})\setminus V$. Then, $t_\ell f\in I$ and we can
write
$$
t_\ell f=(a_{1,1}+t_\ell a_{1,2})f_1+\cdots+(a_{q,1}+t_\ell
a_{q,2})f_q,
$$
where $a_{i,1}\in
K[\{t_1,\ldots,t_s\}\setminus\{t_\ell\}]$ and
$a_{i,2}\in S$ for $i=1,\ldots,q$. Hence, making $t_\ell=0$, we get 
$\sum_{i=1}^qa_{i,1}f_i=0$ and consequently
$f=\sum_{i=1}^qa_{i,2}f_i$. Hence, $f\in(f_1,\ldots,f_q)=I$ and
$1\in\mathfrak{p}$, a contradiction. Thus,
$\mathfrak{p}\subset\mathfrak{N}$. Therefore, by Proposition~\ref{jan16-24}, one has 
$IS_\mathfrak{N}\cap S=I$. 
\end{proof}

\section{Weighted monomial ideals}\label{weighted-section}
In this section, we study weighted monomial 
ideals. To avoid repetitions, we continue to employ 
the notations and definitions used in Sections~\ref{intro-section}
and 
\ref{prelim-section}.

\begin{lemma}\cite[Lemma~3.5]{nasernejad-w}\label{i-iw} 
Let 
$w$ be a linear weighting of $S$. 
If $I_1,\ldots,I_m$ are irreducible monomial ideals of $S$ and 
$I=I_1\cap\cdots\cap I_m$, then 
$$   
I_w\subset (I_1)_w\cap\cdots\cap (I_m)_w, 
$$
with equality if $w$ is a standard weighting of $S$.  
\end{lemma}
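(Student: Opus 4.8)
The plan is to reduce both inclusions to monomial-by-monomial membership tests, exploiting that all the ideals involved are monomial and that weighting acts on exponent vectors via the linear map $w$. First I would recall that $I = I_1 \cap \cdots \cap I_m$ is itself a monomial ideal, so $I_w$ is generated by $\{t^{w(a)} : t^a \in I\}$, and a monomial $t^{w(a)}$ lies in $(I_j)_w$ precisely when there is a generator $t^{v}$ of $I_j$ with $t^{v} \mid t^{b}$ for some $t^b \in I_j$ whose weight is $t^{w(a)}$ — but it is cleaner to use the irreducible form $I_j = (t_{k}^{b_k} : k \in T_j)$ given in Eq.~\eqref{jun4-21} and the characterization that $t^c \in I_j$ iff $c_k \ge b_k$ for some $k \in T_j$.

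For the inclusion $I_w \subset (I_1)_w \cap \cdots \cap (I_m)_w$, I would take a generator $t^{w(a)}$ of $I_w$ with $t^a \in I$; since $I \subset I_j$ we have $t^a \in I_j$, hence $t^{w(a)} \in (I_j)_w$ by definition of $(I_j)_w$. This holds for every $j$, so $t^{w(a)}$ lies in the intersection; as this accounts for all generators of $I_w$, the inclusion follows. This direction uses no hypothesis on $w$ beyond linearity with $w(\mathbb{N}_+^s) \subset \mathbb{N}_+^s$, which is exactly what is available.

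The reverse inclusion, under the standing assumption that $w$ is standard, is the substantive part. Write $w(a) = (d_1 a_1, \ldots, d_s a_s)$ with each $d_i \in \mathbb{N}_+$. I would take a monomial $t^c$ in $(I_1)_w \cap \cdots \cap (I_m)_w$ and show $t^c \in I_w$. Membership $t^c \in (I_j)_w$ means $t^c$ is divisible by $t^{w(v)}$ for some generator $t^v$ of $I_j$, i.e. $c_k \ge d_k b_k$ for some index $k$ in the support set $T_j$ of the irreducible ideal $I_j = (t_k^{b_k} : k \in T_j)$. The key observation is that $c_k \ge d_k b_k$ with $d_k \ge 1$ forces $\lceil c_k / d_k \rceil \ge b_k$; hence, setting $a := (\lceil c_1/d_1\rceil, \ldots, \lceil c_s/d_s\rceil)$, the exponent vector $a$ satisfies $a_k \ge b_k$ for that same $k \in T_j$, so $t^a \in I_j$. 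Since this works simultaneously for every $j$, we get $t^a \in I_1 \cap \cdots \cap I_m = I$, and therefore $t^{w(a)} \in I_w$. Finally $w(a)_i = d_i \lceil c_i/d_i\rceil \le c_i$ for all $i$ (here using $d_i \ge 1$ again), so $t^{w(a)} \mid t^c$, which gives $t^c \in I_w$ because $I_w$ is a monomial ideal closed under taking multiples. The main obstacle is precisely arranging that a single exponent vector $a$ works for all $m$ irreducible factors at once; the ceiling construction handles this uniformly, and the fact that standard weightings act diagonally on exponents is what makes the componentwise rounding legitimate — for a general (non-diagonal) linear weighting there is no such componentwise rounding, which is why only the inclusion holds in that case. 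I would close by noting that the computation $w(a)_i = d_i\lceil c_i/d_i\rceil \le c_i$ and the implication $c_k \ge d_k b_k \Rightarrow \lceil c_k/d_k\rceil \ge b_k$ are the only arithmetic checks needed, both elementary.
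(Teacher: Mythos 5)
Your first inclusion matches the paper's argument exactly. For the reverse inclusion your overall strategy is sound and is in fact a different (and arguably cleaner) route than the paper's: the paper picks, for each $i$, a generator $t_{j_i}^{\ell_i}$ of $I_i$ whose weighted version divides the given monomial, groups these by variable, keeps the maximal exponent for each variable, and shows that the least common multiple of the chosen generators is a monomial $t^c\in I$ with $t^{w(c)}$ dividing the given monomial. You instead build the witness exponent vector directly from $c$ by componentwise division by the weights, which avoids the bookkeeping with repeated indices.

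However, as written your argument contains a step that fails: the claim $w(a)_i=d_i\lceil c_i/d_i\rceil\le c_i$ is false in general, since the ceiling rounds up --- for $c_i=3$, $d_i=2$ one gets $d_i\lceil c_i/d_i\rceil=4>3$, so $t^{w(a)}$ need not divide $t^c$ and the conclusion $t^c\in I_w$ does not follow. The fix is to use the floor instead: set $a_i=\lfloor c_i/d_i\rfloor$. Then $d_i\lfloor c_i/d_i\rfloor\le c_i$ gives the required divisibility $t^{w(a)}\mid t^c$, and the membership check survives, because $c_k\ge d_kb_k$ gives $c_k/d_k\ge b_k$ with $b_k$ an integer, hence $\lfloor c_k/d_k\rfloor\ge b_k$ and $t^a\in I_j$ for every $j$. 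With that one substitution your proof is complete and correct.
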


\begin{proof} To show the inclusion ``$\subset$'' take 
$t^{w(a)}\in I_w$ with $t^a\in I$. As $t^a\in I_i$ for all $i$, 
$t^{w(a)}\in(I_i)_w$ for all $i$, that is,
$t^{w(a)}\in\bigcap_{i=1}^m(I_i)_w$. 
To show the second part assume that $w$ is a standard weighting, that
is, there are
positive integers $n_1,\ldots,n_s$ such that
$w(a)=(n_1\pi_1(a),\ldots,n_s\pi_s(a))$ for all $a$. Take $t^a\in(I_1)_w\cap\cdots\cap (I_m)_w$. 
Then
\begin{equation}\label{oct25-23}
t^a=t^{\delta_1}t_{j_1}^{\ell_1n_{j_1}}=\cdots=t^{\delta_m}t_{j_m}^{\ell_mn_{j_m}}
\ \mbox{ and }\ t^a=t^\delta{\rm
lcm}\{t_{j_1}^{\ell_1n_{j_1}},\ldots,t_{j_m}^{\ell_rn_{j_m}}\},
\end{equation}
where $t^{\delta_i},t^\delta\in S$ and
$t_{j_i}^{\ell_i}\in\mathcal{G}(I_i)$ for all $i$. We may assume that
$j_1,\ldots,j_k$ are distinct and $j_i\in\{j_1,\ldots,j_k\}$ for
$i>k$. We may also assume that 
$$
\ell_p=\max\{\ell_i\mid 1\leq i\leq
m,\, j_i=j_p\}
$$
for $1\leq p\leq k$. Then, setting 
$c=\ell_1e_{j_1}+\cdots+\ell_ke_{j_k}$, one has 
\begin{equation} 
t^c=t_{j_1}^{\ell_1}\cdots t_{j_k}^{\ell_k}={\rm
lcm}\{t_{j_1}^{\ell_1},\ldots,t_{j_m}^{\ell_m}\}\in I\ \mbox{ and }\ 
t^{w(c)}=t_{j_1}^{\ell_1n_{j_1}}\cdots
t_{j_k}^{\ell_kn_{j_k}}\in I_w.
\end{equation}
\quad Therefore, from the equalities
$$
t_{j_1}^{\max\{\ell_in_{j_i}\mid 1\leq i\leq m,\,
j_i=j_1\}}=t^{\epsilon_1}t_{j_1}^{\ell_1n_{j_1}},\ldots,
t_{j_k}^{\max\{\ell_in_{j_i}\mid 1\leq i\leq m,\,
j_i=j_k\}}=t^{\epsilon_k}t_{j_k}^{\ell_kn_{j_k}},
$$
we get that ${\rm
lcm}\{t_{j_1}^{\ell_1n_{j_1}},\ldots,t_{j_m}^{\ell_mn_{j_m}}\}=
(t^{\epsilon_1}t_{j_1}^{\ell_1n_{j_1}})\cdots(t^{\epsilon_k}t_{j_k}^{\ell_kn_{j_k}})=
t^\epsilon t^{w(c)}$. Then, by Eq.~\eqref{oct25-23}, we obtain that
$t^a$ is a multiple of $t^{w(c)}$, and consequently $t^a\in I_w$.
\end{proof}

\begin{proposition}\cite{nasernejad-w}\label{i-iw-lem} If $w$ is a standard weighting and 
$J_1,\ldots,J_r$ are  monomial ideal, then 
$$
(J_1\cap\cdots\cap J_r)_w=({J}_1)_w\cap\cdots\cap({J}_r)_w.
$$
\end{proposition}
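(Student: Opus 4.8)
The plan is to reduce the statement about arbitrary monomial ideals to the case of irreducible monomial ideals already handled in Lemma~\ref{i-iw}. First I would recall that every monomial ideal $J_i$ admits an irreducible decomposition $J_i = \bigcap_{k} I_{i,k}$, where each $I_{i,k}$ is an irreducible monomial ideal of the form $(\{t_j^{b_j}\mid b_j\geq 1\})$, as stated in Eq.~\eqref{jun4-21}. Then the hypothesis ideal $J := J_1\cap\cdots\cap J_r$ is the intersection of the whole collection $\{I_{i,k}\}_{i,k}$ of irreducible monomial ideals, so Lemma~\ref{i-iw} applies directly to $J$ and gives $J_w = \bigcap_{i,k} (I_{i,k})_w$ (equality, since $w$ is standard).

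Next I would apply Lemma~\ref{i-iw} a second time, to each individual $J_i = \bigcap_k I_{i,k}$, to get $(J_i)_w = \bigcap_k (I_{i,k})_w$ for each $i$. Intersecting over $i$ yields $\bigcap_{i=1}^r (J_i)_w = \bigcap_{i}\bigcap_k (I_{i,k})_w = \bigcap_{i,k}(I_{i,k})_w$, which is exactly the expression we obtained for $J_w$. Comparing the two displays gives $(J_1\cap\cdots\cap J_r)_w = (J_1)_w\cap\cdots\cap(J_r)_w$, as desired. Note that the irreducible decompositions need not be minimal and the $I_{i,k}$ need not be distinct across different $i$'s, but this is harmless: Lemma~\ref{i-iw} is stated for any finite list of irreducible monomial ideals without a distinctness hypothesis, and repeating ideals in an intersection does not change it.

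The only genuine content here is Lemma~\ref{i-iw}; everything else is a bookkeeping argument that intersections of irreducible components can be regrouped. So I do not expect a real obstacle, but the one point that needs a line of care is making sure that the standard-weighting equality in Lemma~\ref{i-iw} is invoked with the correct total family of irreducible ideals in both directions—that is, that the family used to decompose $J$ as a whole is literally the union of the families used to decompose the individual $J_i$. Since irreducible decomposition commutes with finite intersection in the obvious way (the irreducible components of $\bigcap_i J_i$ are obtained by listing the components of all the $J_i$ together), this matching is immediate.

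For completeness I would also remark that the inclusion ``$\subset$'' in the proposition holds for any linear weighting $w$, not just standard ones, by the first half of Lemma~\ref{i-iw} applied to the total family; only the reverse inclusion requires $w$ standard. This parallels the structure of Lemma~\ref{i-iw} itself and explains why the hypothesis ``$w$ is a standard weighting'' is exactly what is needed.
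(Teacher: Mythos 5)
Your proposal is correct and follows essentially the same route as the paper: decompose each $J_i$ into irreducible components, apply Lemma~\ref{i-iw} once to the total family to compute $(J_1\cap\cdots\cap J_r)_w$ and once to each $J_i$ individually, then regroup the intersections. Your added remarks (that minimality and distinctness of components are irrelevant since Lemma~\ref{i-iw} is stated for an arbitrary finite list, and that the inclusion ``$\subset$'' needs only a linear weighting) are accurate and consistent with how the lemma is stated and used in the paper.
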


\begin{proof} Let $J_j=\bigcap_{i=1}^{\ell_j} J_{i,j}$ be the
irreducible decomposition of $J_j$ for $j=1,\ldots,r$. Then, by
applying Lemma~\ref{i-iw} twice, one has
\begin{align*}
(J_j)_w=&\bigcap_{i=1}^{\ell_j}(J_{i,j})_w\ \mbox{ for }\
j=1,\ldots,r,\ \mbox{ and }\\
\bigg(\bigcap_{j=1}^r J_j
\bigg)_w&=\bigg(\bigcap_{j=1}^r\bigg(\bigcap_{i=1}^{\ell_j}J_{i,j}\bigg)\bigg)_w
=\bigg(\bigcap_{i,j}J_{i,j}\bigg)_w=\bigcap_{i,j}(J_{i,j})_w=
\bigcap_{j=1}^r\bigg(\bigcap_{i=1}^{\ell_j}(J_{i,j})_w\bigg)
=\bigcap_{j=1}^r (J_j)_w,
\end{align*}
and the proof is complete.
\end{proof}

\begin{lemma}\cite{nasernejad-w}\label{comm-lw}
If $w$ is a linear weighting and $I$ is a monomial ideal, then 
$(I_w)^n=(I^n)_w$.
\end{lemma}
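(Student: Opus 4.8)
The plan is to compare the two monomial ideals $(I_w)^n$ and $(I^n)_w$ by showing they admit a common generating set, the only real input being the additivity of $w$ together with the description of powers of a monomial ideal via sums of generators. Write $\mathcal{G}(I)=\{t^{v_1},\ldots,t^{v_q}\}$. First I would record the ``take generators'' reduction already implicit in the definition: if $t^a\in I$, then $a=b+v_i$ for some $b\in\mathbb{N}^s$ and some $i$, so by linearity $w(a)=w(b)+w(v_i)$ and hence $t^{w(a)}=t^{w(b)}t^{w(v_i)}$ lies in $(\{t^{w(v_i)}\mid i=1,\ldots,q\})$; the reverse containment being obvious, this gives $I_w=(\{t^{w(v_i)}\mid i=1,\ldots,q\})$. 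The identical argument applies to \emph{any} monomial ideal $J$ in place of $I$: if $t^{u_1},\ldots,t^{u_p}$ generate $J$, then $J_w=(\{t^{w(u_j)}\mid j=1,\ldots,p\})$. I will use this for $J=I^n$.

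Next comes the combinatorial heart. A monomial $t^a$ lies in $I^n$ if and only if $a=b+v_{i_1}+\cdots+v_{i_n}$ for some $b\in\mathbb{N}^s$ and some indices $i_1,\ldots,i_n\in\{1,\ldots,q\}$; in particular the monomials $t^{v_{i_1}+\cdots+v_{i_n}}$, ranging over all index tuples, generate $I^n$. Applying the reduction of the previous paragraph to $J=I^n$, the ideal $(I^n)_w$ is generated by the monomials $t^{w(v_{i_1}+\cdots+v_{i_n})}$. By linearity of $w$ this equals $t^{w(v_{i_1})+\cdots+w(v_{i_n})}=t^{w(v_{i_1})}\cdots t^{w(v_{i_n})}$, which is precisely a generating product for $(I_w)^n$ (using that $\mathcal{G}(I_w)=\{t^{w(v_i)}\}$ from the first step). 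Conversely every generator $t^{w(v_{i_1})}\cdots t^{w(v_{i_n})}$ of $(I_w)^n$ equals $t^{w(v_{i_1}+\cdots+v_{i_n})}$ and so lies in $(I^n)_w$. Thus the two ideals have the same generating set and coincide.

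For this bookkeeping to make sense one needs that every $t^{w(a)}$ with $a\in\mathbb{N}^s$ is a genuine monomial, i.e.\ $w(\mathbb{N}^s)\subseteq\mathbb{N}^s$; I would note that this follows from the standing hypothesis $w(\mathbb{N}_+^s)\subseteq\mathbb{N}_+^s$ and linearity. Indeed, writing $\mathbf{1}=(1,\ldots,1)$, for $a\in\mathbb{N}^s$ both $a+\mathbf{1}$ and $\mathbf{1}$ lie in $\mathbb{N}_+^s$, so $w(a)=w(a+\mathbf{1})-w(\mathbf{1})\in\mathbb{Z}^s$; and since $a+k\mathbf{1}\in\mathbb{N}_+^s$ forces $kw(\mathbf{1})+w(a)$ to have all coordinates positive for every $k\geq 1$, letting $k\to\infty$ shows $w(a)$ has nonnegative coordinates, hence $w(a)\in\mathbb{N}^s$. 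There is no genuine obstacle in this lemma — the entire content is the identity $w(x+y)=w(x)+w(y)$ fed into the monomial-power description — so the only point requiring care is keeping the two ``pass to generators'' reductions clean and not conflating the minimal set $\mathcal{G}(I^n)$ with the (possibly non-minimal, but harmless) full list of $n$-fold products of elements of $\mathcal{G}(I)$.
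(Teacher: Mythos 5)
Your argument is correct and rests on exactly the same observation as the paper's proof, namely the additivity $w(\alpha_1+\cdots+\alpha_n)=w(\alpha_1)+\cdots+w(\alpha_n)$; the only difference is presentational. The paper chases an arbitrary monomial of $(I_w)^n$ (resp.\ $(I^n)_w$), writing it as $t^\delta t^{w(\alpha_1)}\cdots t^{w(\alpha_n)}$ with $t^{\alpha_i}\in I$ and regrouping, whereas you match generating sets. One consequence of your route is that you genuinely need $w(\mathbb{N}^s)\subseteq\mathbb{N}^s$ in order to pass from $w(a)=w(b)+w(v_i)$ to divisibility of $t^{w(a)}$ by $t^{w(v_i)}$; the paper's element-wise version only ever \emph{adds} exponent vectors of monomials already in $I$, so it never has to address this. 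The containment $w(\mathbb{N}^s)\subseteq\mathbb{N}^s$ is true, but your justification of it is off: from $a+k\mathbf{1}\in\mathbb{N}_+^s$ you get $w(a)+kw(\mathbf{1})\in\mathbb{N}_+^s$ for all $k\geq 1$, and since $w(\mathbf{1})$ has positive coordinates this says nothing about the sign of $w(a)$ as $k\to\infty$ (e.g.\ $w(a)=(-1,3)$ and $w(\mathbf{1})=(5,5)$ satisfy all of these constraints). The scaling must go on $a$: from $\mathbf{1}+ka\in\mathbb{N}_+^s$ you get $w(\mathbf{1})+kw(a)\in\mathbb{N}_+^s$, and dividing by $k$ and letting $k\to\infty$ yields $w(a)\geq 0$ coordinatewise; combined with $w(a)=w(a+\mathbf{1})-w(\mathbf{1})\in\mathbb{Z}^s$ this gives $w(a)\in\mathbb{N}^s$. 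With that repair (or by switching to the paper's two-inclusion element chase, which avoids the issue entirely) your proof is complete.
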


\begin{proof} To show the inclusion ``$\subset$'', take
any monomial $t^a\in(I_w)^n$, that is, 
$t^a=t^\delta t^{w(\alpha_1)}\cdots t^{w(\alpha_n)}$, where
$t^\delta\in S$ and $t^{\alpha_i}\in I$ for all $i$. Setting
$\beta=\alpha_1+\cdots+\alpha_n$, one has $t^\beta\in I^n$,
$t^{w(\beta)}\in(I^n)_w$, and $t^a=t^\delta t^{w(\beta)}$. 
Thus, $t^a\in(I^n)_w$. 
To show the reverse inclusion ``$\supset$'' take any monomial $t^a\in(I^n)_w$, that is, 
$t^a=t^\epsilon t^{w(\gamma)}$, where $t^\epsilon\in S$ and $t^\gamma\in I^n$. Then, we can
write $t^\gamma=t^{b_1}\cdots t^{b_n}$, where $t^{b_i}\in I$ for all
$i$, and consequently $t^{w(\gamma)}\in(I_w)^n$. Thus,
$t^a=t^\epsilon t^{w(\gamma)}\in(I_w)^n$.
\end{proof}

\begin{lemma}\label{iw=jw}
Let $w$ be a standard weighting of $S$. If $I$ and $J$ are monomial
ideals of $S$ and $I_w=J_w$, then $I=J$.
\end{lemma}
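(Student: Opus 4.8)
The plan is to exploit the standardness of $w$ to recover the exponents of generators of $I$ and $J$ from those of $I_w$ and $J_w$. Write $w(a) = (d_1 a_1, \ldots, d_s a_s)$ with each $d_i \in \mathbb{N}_+$. Since $w$ is a bijective linear map on $\mathbb{R}^s$ (the matrix is diagonal with nonzero entries), it restricts to a bijection $\mathbb{N}^s \to w(\mathbb{N}^s)$, and it is order-preserving for the componentwise partial order: $a \leq b$ in $\mathbb{N}^s$ if and only if $w(a) \leq w(b)$. The key observation is that the divisibility order on monomials of $S$ corresponds exactly to the componentwise order on exponent vectors, so $w$ being order-preserving and injective on $\mathbb{N}^s$ means $t^a \mid t^b \iff t^{w(a)} \mid t^{w(b)}$.

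First I would show that $\mathcal{G}(I_w) = \{ t^{w(v)} \mid t^v \in \mathcal{G}(I) \}$. By definition $I_w$ is generated by $\{t^{w(v)} : t^v \in \mathcal{G}(I)\}$, so it suffices to check this generating set is minimal, i.e., no $t^{w(v_i)}$ divides another $t^{w(v_j)}$ with $i \neq j$; but by the order-correspondence above this would force $t^{v_i} \mid t^{v_j}$, contradicting minimality of $\mathcal{G}(I)$. (One also needs that every generator $t^a \in I$ of the form $t^{w(b)}$ with $t^b \in I$ is a multiple of some $t^{w(v_i)}$: since $t^b \in I$, $t^{v_i} \mid t^b$ for some $i$, hence $t^{w(v_i)} \mid t^{w(b)} = t^a$.)

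Next, from $I_w = J_w$ we get $\mathcal{G}(I_w) = \mathcal{G}(J_w)$, so $\{ t^{w(v)} : t^v \in \mathcal{G}(I)\} = \{ t^{w(v)} : t^v \in \mathcal{G}(J)\}$. Applying the inverse of $w$ on exponent vectors (which is well-defined since $w$ is injective on $\mathbb{N}^s$: $t^{w(a)} = t^{w(b)}$ forces $w(a) = w(b)$ forces $a = b$), we conclude $\mathcal{G}(I) = \mathcal{G}(J)$, hence $I = J$. The only mild subtlety — and the step I would be most careful about — is the claim that $\mathcal{G}(I_w)$ is exactly the image of $\mathcal{G}(I)$ rather than a proper subset; this is where standardness of $w$ is genuinely used (a non-standard linear weighting can collapse distinct monomials or destroy the divisibility correspondence, e.g. by mixing variables). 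Everything else is a routine transfer of the monomial-order structure through the diagonal linear map $w$.
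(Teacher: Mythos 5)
Your proof is correct and rests on the same key fact as the paper's: for a standard (diagonal) weighting, $t^{w(a)}$ divides $t^{w(b)}$ if and only if $t^a$ divides $t^b$, since $d_ia_i\geq d_ib_i$ is equivalent to $a_i\geq b_i$. The paper runs a direct element-wise double inclusion with this fact (from $w(a)=\delta+w(b)$ it deduces $a=\epsilon+b$), while you package the same observation through the uniqueness of the minimal monomial generating set; the difference is purely organizational.
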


\begin{proof} To show the inclusion ``$\subset$'' take $t^a\in I$.
Then, $t^{w(a)}\in I_w$ and we can write $t^{w(a)}=t^\delta t^{w(b)}$
for some $t^\delta\in S$ and $t^b\in J$. Thus, $w(a)=\delta+w(b)$. We
set $a=(a_1,\ldots,a_s)$, $b=(b_1,\ldots,b_s)$. Since $w$ is standard,
$w(a)=(\pi_1(a)n_1,\ldots,\pi_s(a)n_s)$, $n_i\in\mathbb{N}_+$ for all
$i$, and we have $a_in_i\geq b_in_i$ for all $i$. Then, we can write
$a=\epsilon+b$ for some $\epsilon\in\mathbb{N}^s$, and consequently
$t^a=t^\epsilon t^b\in J$. The inclusion ``$\supset$'' follows using
similar arguments.
\end{proof}


\begin{theorem}\label{i-iw-thm} Let $I$ be a monomial ideal and let 
$I=\mathfrak{q}_1\cap\cdots\cap\mathfrak{q}_r$ be
the minimal primary decomposition of $I$ in Eq.~\eqref{oct28-23}. If
$w$ is a standard weighting of $S$ and $n\in\mathbb{N}_+$ is a fixed
positive integer, 
then $I^n=\bigcap_{i=1}^r\mathfrak{q}_i^n$ 
if and only if
$(I_w)^n=\bigcap_{i=1}^r((\mathfrak{q}_i)_w)^n$.
\end{theorem}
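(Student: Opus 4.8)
The plan is to reduce everything to the two lemmas already available: Lemma~\ref{comm-lw}, which says $(I_w)^n=(I^n)_w$ (and hence $((\mathfrak{q}_i)_w)^n=(\mathfrak{q}_i^n)_w$ for each $i$), and Proposition~\ref{i-iw-lem}, which says that for a standard weighting $w$ the operation $J\mapsto J_w$ commutes with finite intersections of monomial ideals. Using these, the right-hand side of the desired equivalence can be rewritten as
$$
(I_w)^n=\bigcap_{i=1}^r((\mathfrak{q}_i)_w)^n
\iff
(I^n)_w=\bigcap_{i=1}^r(\mathfrak{q}_i^n)_w
\iff
(I^n)_w=\Bigl(\bigcap_{i=1}^r\mathfrak{q}_i^n\Bigr)_w.
$$
So the statement $I^n=\bigcap_{i=1}^r\mathfrak{q}_i^n$ is equivalent to $(I^n)_w=(\bigcap_{i=1}^r\mathfrak{q}_i^n)_w$, and it remains only to show that $J\mapsto J_w$ is injective on monomial ideals when $w$ is standard.

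First I would record the reduction above carefully, being explicit that $\mathfrak{q}_i^n$ is a monomial ideal so Proposition~\ref{i-iw-lem} applies to the family $\mathfrak{q}_1^n,\ldots,\mathfrak{q}_r^n$. Next I would invoke Lemma~\ref{iw=jw}: since $w$ is a standard weighting, $I_w=J_w$ forces $I=J$ for monomial ideals $I,J$. Applying this with the two monomial ideals $I^n$ and $\bigcap_{i=1}^r\mathfrak{q}_i^n$ (both are intersections/powers of monomial ideals, hence monomial) gives
$$
(I^n)_w=\Bigl(\bigcap_{i=1}^r\mathfrak{q}_i^n\Bigr)_w
\iff
I^n=\bigcap_{i=1}^r\mathfrak{q}_i^n,
$$
which closes the chain of equivalences. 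For the forward direction one does not even need injectivity — applying the weighting functor directly to $I^n=\bigcap\mathfrak{q}_i^n$ and using the two lemmas gives $(I_w)^n=\bigcap((\mathfrak{q}_i)_w)^n$ immediately — so the real content is the converse, which is precisely where Lemma~\ref{iw=jw} is needed.

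I do not expect a serious obstacle here: the proof is essentially a bookkeeping argument assembling Lemmas~\ref{comm-lw} and \ref{iw=jw} together with Proposition~\ref{i-iw-lem}. The one point that deserves care is that Proposition~\ref{i-iw-lem} and Lemma~\ref{iw=jw} both genuinely require $w$ to be \emph{standard} (the analogous statements fail for general linear weightings, where Lemma~\ref{i-iw} only gives an inclusion), so I would make sure the hypothesis is used visibly in both places and not silently. A secondary small check is that each of $I^n$, $\mathfrak{q}_i^n$, and their finite intersections is again a monomial ideal, so that all the cited results apply — this is routine but worth a sentence.
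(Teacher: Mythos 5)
Your proposal is correct and follows essentially the same route as the paper's own proof: both directions are handled by combining Lemma~\ref{comm-lw} and Proposition~\ref{i-iw-lem} to rewrite each side in the form $(-)_w$, and the converse is closed exactly as you say by the injectivity statement of Lemma~\ref{iw=jw}. Your remarks about where the standardness of $w$ is genuinely needed match the paper's use of these lemmas.
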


\begin{proof} $\Rightarrow$) Assume the equality 
$I^n=\bigcap_{i=1}^r\mathfrak{q}_i^n$ for some $n\in\mathbb{N}_+$.
Then, using Lemma~\ref{comm-lw} and
Proposition~\ref{i-iw-lem}, we get
$$
(I_w)^n=(I^n)_w=\bigg(\bigcap_{i=1}^r\mathfrak{q}_i^n\bigg)_w=
\bigcap_{i=1}^r(\mathfrak{q}_i^n)_w=\bigcap_{i=1}^r((\mathfrak{q}_i)_w)^n.
$$
\quad $\Leftarrow$) Assume that
$(I_w)^n=\bigcap_{i=1}^r((\mathfrak{q}_i)_w)^n$. 
By Lemma~\ref{comm-lw} and Proposition~\ref{i-iw-lem}, we get
$$
(I^n)_w=\bigg(\bigcap_{i=1}^r\mathfrak{q}_i^n\bigg)_w.
$$
\quad Thus, by Lemma~\ref{iw=jw}, we get
$I^n=\bigcap_{i=1}^r\mathfrak{q}_i^n$.
\end{proof}

\begin{corollary}\label{i-ip-coro} If $w$ is a standard weighting 
and $I=\bigcap_{i=1}^r\mathfrak{q}_i$ is a minimal
primary decomposition of a monomial ideal $I$, then
$I_w=\bigcap_{i=1}^r(\mathfrak{q}_i)_w$ is a minimal
primary decomposition of $I_w$.
\end{corollary}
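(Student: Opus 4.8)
The plan is to split the claim into an ideal equality and the verification of minimality. The equality $I_w=\bigcap_{i=1}^r(\mathfrak{q}_i)_w$ is immediate from Proposition~\ref{i-iw-lem}: applying it to the monomial ideals $\mathfrak{q}_1,\ldots,\mathfrak{q}_r$ and using that $w$ is standard gives $I_w=(\mathfrak{q}_1\cap\cdots\cap\mathfrak{q}_r)_w=(\mathfrak{q}_1)_w\cap\cdots\cap(\mathfrak{q}_r)_w$. It then remains to check the three conditions making this intersection a \emph{minimal} primary decomposition of $I_w$: each $(\mathfrak{q}_i)_w$ is primary, the radicals ${\rm rad}((\mathfrak{q}_i)_w)$ are pairwise distinct, and no $(\mathfrak{q}_j)_w$ can be omitted.

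For primariness I would invoke Proposition~\ref{stanprim-iff}: after permuting the variables, $\mathfrak{q}_i=(t_1^{a_1},\ldots,t_k^{a_k},t^{b_1},\ldots,t^{b_\ell})$ with $a_j\geq 1$ and $\bigcup_j{\rm supp}(t^{b_j})\subset\{t_1,\ldots,t_k\}$. Writing $w(t_j)=d_j$, the definition of the weighted ideal applied to this generating set gives $(\mathfrak{q}_i)_w=(t_1^{d_1a_1},\ldots,t_k^{d_ka_k},t^{w(b_1)},\ldots,t^{w(b_\ell)})$. Since $d_ja_j\geq 1$, and since a standard weighting only rescales exponents so that ${\rm supp}(t^{w(b_j)})={\rm supp}(t^{b_j})\subset\{t_1,\ldots,t_k\}$, Proposition~\ref{stanprim-iff} shows $(\mathfrak{q}_i)_w$ is primary with ${\rm rad}((\mathfrak{q}_i)_w)=(t_1,\ldots,t_k)={\rm rad}(\mathfrak{q}_i)=\mathfrak{p}_i$. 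In particular the radicals of $(\mathfrak{q}_1)_w,\ldots,(\mathfrak{q}_r)_w$ are exactly $\mathfrak{p}_1,\ldots,\mathfrak{p}_r$, which are pairwise distinct because the decomposition $I=\bigcap_{i=1}^r\mathfrak{q}_i$ is minimal.

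Finally, for irredundancy, suppose $I_w=\bigcap_{i\neq j}(\mathfrak{q}_i)_w$ for some $j$. By Proposition~\ref{i-iw-lem} again, $\bigcap_{i\neq j}(\mathfrak{q}_i)_w=\big(\bigcap_{i\neq j}\mathfrak{q}_i\big)_w$, so $I_w=\big(\bigcap_{i\neq j}\mathfrak{q}_i\big)_w$, and then Lemma~\ref{iw=jw} (injectivity of $I\mapsto I_w$ for standard weightings) forces $I=\bigcap_{i\neq j}\mathfrak{q}_i$, contradicting the minimality of $I=\bigcap_{i=1}^r\mathfrak{q}_i$. The real content — that weighting commutes with finite intersections and that $I\mapsto I_w$ is injective — has already been established in Proposition~\ref{i-iw-lem} and Lemma~\ref{iw=jw}, so the only step that needs genuine care is the middle one: transporting the combinatorial shape of $\mathfrak{q}_i$ furnished by Proposition~\ref{stanprim-iff} through the weighting $w$ to conclude that $(\mathfrak{q}_i)_w$ is again primary with the same radical.
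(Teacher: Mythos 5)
Your proposal is correct and follows essentially the same route as the paper, whose proof is just the citation ``This follows from Propositions~\ref{stanprim-iff} and \ref{i-iw-lem}''; you have filled in exactly the details that citation gestures at (transporting the shape of $\mathfrak{q}_i$ from Proposition~\ref{stanprim-iff} through $w$ to get primariness and the unchanged radical $\mathfrak{p}_i$, and using Proposition~\ref{i-iw-lem} for the intersection). Your additional irredundancy argument via Lemma~\ref{iw=jw} is a natural completion within the paper's own toolkit.
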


\begin{proof} This follows from Propositions~\ref{stanprim-iff} and
\ref{i-iw-lem}, or from the proof of
\cite[Lemma~3.9]{nasernejad-w}. 
\end{proof}

\begin{proposition}\cite[Theorem~2.6]{Al-Ayyoub-Nasernejad-cover-ideals}
\label{jan24-24} Let $I$ be a monomial ideal of $S$. If $I_w$ is normal and $w$ is
a standard linear weighting, then $I$ is normal. 
\end{proposition}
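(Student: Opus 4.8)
The plan is to use the integral-closure criterion of Lemma~\ref{icd} together with the fact that weighting commutes with taking powers (Lemma~\ref{comm-lw}). Assume $I_w$ is normal, so $\overline{(I_w)^n}=(I_w)^n$ for all $n\geq 1$; I want to show $\overline{I^n}=I^n$ for all $n$. The inclusion $I^n\subset\overline{I^n}$ is automatic, so fix a monomial $t^a\in\overline{I^n}$ and aim to prove $t^a\in I^n$.

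First I would apply Lemma~\ref{icd}: there is an integer $p\geq 1$ with $(t^a)^p\in I^{pn}$, i.e. $t^{pa}\in I^{pn}$. Applying the weighting to this membership and using Lemma~\ref{comm-lw} twice, I get $t^{w(pa)}\in (I^{pn})_w=((I^n)_w)^p=((I_w)^n)^p$. Since $w$ is linear, $w(pa)=p\,w(a)$, so $(t^{w(a)})^p\in((I_w)^n)^p$; again by Lemma~\ref{icd} this says $t^{w(a)}\in\overline{(I_w)^n}$. Now the normality hypothesis on $I_w$ gives $t^{w(a)}\in (I_w)^n=(I^n)_w$ (using Lemma~\ref{comm-lw} once more).

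It remains to descend from $t^{w(a)}\in(I^n)_w$ to $t^a\in I^n$. This is exactly the kind of descent carried out in the proof of Lemma~\ref{iw=jw}: since $w$ is standard, $w(a)=(n_1\pi_1(a),\ldots,n_s\pi_s(a))$ with each $n_i\in\mathbb{N}_+$, and $t^{w(a)}\in(I^n)_w$ means $w(a)=\delta+w(b)$ for some $\delta\in\mathbb{N}^s$ and some monomial $t^b\in I^n$; comparing coordinates gives $n_i\pi_i(a)\geq n_i\pi_i(b)$, hence $\pi_i(a)\geq\pi_i(b)$ for all $i$, so $a=\epsilon+b$ with $\epsilon\in\mathbb{N}^s$ and $t^a=t^\epsilon t^b\in I^n$. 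Thus $\overline{I^n}=I^n$ for all $n\geq 1$, i.e. $I$ is normal.

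The only genuinely delicate point is the standardness hypothesis: the coordinatewise comparison in the last step, and equally the identities $(I^n)_w=((I_w)^n)$ chained with irreducible-decomposition compatibility, rely on $w$ being standard (a general linear weighting need not be injective or coordinatewise monotone, cf. Lemma~\ref{iw=jw}). Everything else is a routine transfer through Lemmas~\ref{icd} and \ref{comm-lw}. One could alternatively phrase the argument entirely at the level of ``$I$ is normal iff $\overline{I^n}=I^n$,'' but the power-by-power version above keeps each step elementary. Note this only proves the stated direction; the converse genuinely fails, as remarked in the text.
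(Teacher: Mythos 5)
Your argument is correct and follows essentially the same route as the paper's proof: lift $t^a\in\overline{I^n}$ through $w$ using Lemma~\ref{icd} and Lemma~\ref{comm-lw}, invoke normality of $I_w$ to land in $(I_w)^n=(I^n)_w$, and then descend using standardness of $w$. The only cosmetic difference is in the final descent, where the paper uses bijectivity of $w$ to write the cofactor as $w(\epsilon)$ while you compare coordinates as in Lemma~\ref{iw=jw}; both are valid.
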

\begin{proof} To show the equality $I^n=\overline{I^n}$ for all 
$n\geq 1$ we need only show the inclusion $\overline{I^n}\subset I^n$
because the reverse inclusion is clear. Take $t^a\in\overline{I^n}$.
Then, by Lemma~\ref{icd}, $(t^a)^k\in 
(I^{n})^k$ for some $k\geq 1$, and we can write $t^{ka}=t^\delta
t^{c_1}\cdots t^{c_{nk}}$, where $t^\delta\in S$ and $t^{c_i}\in I$
for all $i$. Applying $w$ to $ka$ gives
$$
(t^{w(a)})^k=t^{kw(a)}=t^{w(\delta)}t^{w(c_1)}\cdots t^{w(c_{nk})}\in
((I_w)^{n})^k.
$$
\quad Thus, $t^{w(a)}\in\overline{(I_w)^{n}}=(I_w)^{n}$, and we can
write $t^{w(a)}=t^\gamma t^{w(b_1)}\cdots t^{w(b_n)}$, where $t^\gamma\in
S$ and $t^{b_i}\in I$ for all $i$.  As $w$ is a standard weighting, it is
bijective, and we can write $\gamma=w(\epsilon)$ for some
$\epsilon\in\mathbb{Q}_+^s$. Hence, $a=\epsilon+b_1+\cdots+b_n$, 
$\epsilon\in\mathbb{N}^s$, $t^a\in I^n$, and the proof is complete.  
\end{proof}

\section{Symbolic powers of monomial
ideals}\label{symbolic-monomial-section}
In this section, we study Simis and
normally torsion-free monomial
ideals, and relate some of the properties of a monomial ideal and its
weighted monomial ideal. 

\begin{proposition}\label{jan20-24} Let $I\subset S$ be a monomial ideal with 
$\mathcal{G}(I)\subset K[V]$ for some $V\subset\{t_1,\ldots,t_s\}$ and
let $\mathfrak{N}=(V)$ be the ideal of $S$ generated by $V$. The
following hold.
\begin{enumerate} 
\item[(a)] For each $n\geq 1$, all the
 associated primes of $I^n$ are contained in
$\mathfrak{N}$.
\item[(b)] $I^nS_\mathfrak{N}\cap S=(IS_\mathfrak{N})^n\cap S=I^n$
for all $n\geq 1$. 
\end{enumerate} 
\end{proposition}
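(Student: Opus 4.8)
The plan is to reduce everything to the two auxiliary results already available: Proposition~\ref{jan21-24-1}, which says that if every associated prime of an ideal generated by polynomials in $K[V]$ is a monomial prime, then all those associated primes lie in $\mathfrak{N}=(V)$ and $I$ is saturated with respect to $\mathfrak{N}$; and Proposition~\ref{jan16-24}, which gives $IS_\mathfrak{p}\cap S=I$ whenever $\mathfrak{p}$ contains all associated primes. The only genuine work is to check the hypotheses of Proposition~\ref{jan21-24-1} apply not just to $I$ but to every power $I^n$.

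For part~(a): fix $n\geq 1$. Since $I$ is a monomial ideal with $\mathcal{G}(I)\subset K[V]$, the power $I^n$ is again a monomial ideal and its minimal generators are products of $n$ elements of $\mathcal{G}(I)$, hence still lie in $K[V]$. Because $I^n$ is a monomial ideal, its minimal primary decomposition is the canonical one of Eq.~\eqref{oct28-23}, so all its associated primes $\mathfrak{p}_1,\ldots,\mathfrak{p}_r$ are monomial primes, i.e.\ generated by subsets of $\{t_1,\ldots,t_s\}$. Thus $I^n$ satisfies the hypothesis of Proposition~\ref{jan21-24-1} with the same $V$, and we conclude that every associated prime of $I^n$ is contained in $\mathfrak{N}$.

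For part~(b): first note $(IS_\mathfrak{N})^n=I^nS_\mathfrak{N}$, since localization commutes with taking powers of ideals; this gives the first equality. For the second, apply Proposition~\ref{jan21-24-1} to the ideal $I^n$ — whose generators lie in $K[V]$ and whose associated primes are monomial, as just observed — to get $I^nS_\mathfrak{N}\cap S=I^n$. Alternatively one invokes part~(a) together with Proposition~\ref{jan16-24}: since $\mathfrak{N}$ contains all associated primes of $I^n$, we get $I^nS_\mathfrak{N}\cap S=I^n$ directly.

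The argument is essentially a bookkeeping exercise; the only point that needs a moment's care is the stability of the hypothesis under taking powers — namely that $\mathcal{G}(I^n)\subset K[V]$ and that $I^n$, being monomial, has only monomial associated primes. Once that is in hand, both parts are immediate consequences of the two propositions from Section~\ref{symbolic-section}. So I do not anticipate any real obstacle; if anything, the subtlety is purely notational, making sure one is quoting Proposition~\ref{jan21-24-1} with $I^n$ in place of $I$ rather than re-deriving the saturation statement.
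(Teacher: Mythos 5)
Your proof is correct; the only substantive point it needs --- that $\mathcal{G}(I^n)\subset K[V]$ and that $I^n$, being a monomial ideal, has only monomial associated primes (via the canonical decomposition in Eq.~\eqref{oct28-23}) --- is present, so the appeal to Proposition~\ref{jan21-24-1} is legitimate. The route differs mildly from the paper's. The paper reduces to $n=1$ and then proves (a) by a direct, monomial-specific computation: writing an associated prime as $\mathfrak{p}=(I\colon t^b)$ for a monomial $t^b$ and showing each variable $t_\ell\in\mathfrak{p}$ must lie in $V$ (otherwise $t_\ell$ would divide the cofactor and force $t^b\in I$); part (b) then follows from (a) and Proposition~\ref{jan16-24}, exactly as in your alternative. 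You instead outsource (a) entirely to the more general Proposition~\ref{jan21-24-1}, applied to $I^n$ rather than $I$. What your version buys is modularity --- no argument is repeated, and the monomial case is exhibited as a special case of the polynomial statement; what the paper's version buys is a self-contained elementary proof that does not depend on first knowing that associated primes of monomial ideals are generated by variables (though in the end both proofs invoke that fact to pass from ``every variable of $\mathfrak{p}$ lies in $V$'' to ``$\mathfrak{p}\subset\mathfrak{N}$''). Either way the content is the same and there is no gap.
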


\begin{proof}  Since $\mathcal{G}(I^n)\subset K[V]$ for all $n\geq 1$
and recalling that powers of ideals
commute with localizations \cite[p.~76]{monalg-rev}, to show (a) and (b) 
we need only show the case $n=1$.  

(a) Let $\mathfrak{p}$ be an associated prime of $I$, that is, 
$\mathfrak{p}=(I\colon t^b)$ for some $t^b\in S$. 
Take $t_\ell\in\mathfrak{p}$. Then, $t_\ell t^b\in I$ and 
consequently $t_\ell t^b=t^c t^a$, $t^c\in S$, $t^a\in\mathcal{G}(I)$.
We claim that $t_\ell\in V$. If $t_\ell\notin V$, then $t_\ell$ divides $t^c$ because
$t^a\in\mathcal{G}(I)\subset K[V]$. Thus, $t^b\in I$ and
$1\in\mathfrak{p}$, a contradiction.
Hence, as $\mathfrak{p}$ is generated by a subset of
$\{t_1,\ldots,t_s\}$, we get $\mathfrak{p}\subset(V)=\mathfrak{N}$.

(b) This follows from part (a) and Proposition~\ref{jan16-24}. 
\end{proof}

We recover the following two results: 

\begin{corollary}\cite[Proposition~3.6]{cooper-etal}\label{dec17-23-1}
Let $I\subset S$ be a monomial ideal and let $I=\bigcap_{i=1}^r\mathfrak{q}_i$
be a minimal primary decomposition of $I$. If $\mathfrak{p}\in {\rm
Ass}(I)$, then 
$$
I^nS_\mathfrak{p}\cap S=(IS_\mathfrak{p}\cap
S)^n=(\mathfrak{q}_{\subset\mathfrak{p}})^n=(\mathfrak{q}_{\subset\mathfrak{p}})^{\langle
n\rangle}\ \mbox{ for all }\ 
n\geq 1.
$$
\end{corollary}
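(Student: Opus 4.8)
The plan is to reduce the corollary to results already established in the excerpt, namely Propositions~\ref{dec17-23-coro}, \ref{jan16-24}, and \ref{jan20-24}, together with the localization identities for monomial primary ideals. Fix $\mathfrak{p}\in{\rm Ass}(I)$ and set $V:=\mathcal{G}(\mathfrak{p})\subset\{t_1,\ldots,t_s\}$, so that $\mathfrak{p}=(V)$; note $V$ is a legitimate choice because $\mathfrak{p}$ is a monomial prime. The first step is to pass from $I$ to the ``localized'' ideal $J:=IS_\mathfrak{p}\cap S$. By Proposition~\ref{dec17-23-coro}, $J=\bigcap_{\mathfrak{p}_i\subset\mathfrak{p}}\mathfrak{q}_i=\mathfrak{q}_{\subset\mathfrak{p}}$, which is a monomial ideal whose associated primes are exactly the $\mathfrak{p}_i$ contained in $\mathfrak{p}$; in particular every generator of $J$ lies in $K[V]$, since each $\mathfrak{q}_i$ with $\mathfrak{p}_i\subset\mathfrak{p}$ is a $\mathfrak{p}_i$-primary monomial ideal and hence (by Proposition~\ref{stanprim-iff}) is generated by monomials supported on $V$.

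The second step is to apply Proposition~\ref{jan20-24} to $J=\mathfrak{q}_{\subset\mathfrak{p}}$ with the same $V$ and $\mathfrak{N}=(V)=\mathfrak{p}$: part~(b) gives $J^nS_\mathfrak{p}\cap S=(JS_\mathfrak{p})^n\cap S=J^n$ for all $n\ge 1$, i.e.\ $(\mathfrak{q}_{\subset\mathfrak{p}})^n$ is saturated with respect to $\mathfrak{p}$. The third step is to identify $I^nS_\mathfrak{p}\cap S$ with $J^n$. Since localization commutes with powers and with finite intersections, and $\mathfrak{q}_iS_\mathfrak{p}=S_\mathfrak{p}$ whenever $\mathfrak{p}_i\not\subset\mathfrak{p}$ (Proposition~\ref{dec17-23}(i)), we get $IS_\mathfrak{p}=\bigcap_{\mathfrak{p}_i\subset\mathfrak{p}}\mathfrak{q}_iS_\mathfrak{p}=JS_\mathfrak{p}$, hence $I^nS_\mathfrak{p}=(IS_\mathfrak{p})^n=(JS_\mathfrak{p})^n=J^nS_\mathfrak{p}$, and contracting back, $I^nS_\mathfrak{p}\cap S=J^nS_\mathfrak{p}\cap S=J^n=(\mathfrak{q}_{\subset\mathfrak{p}})^n$ by the second step. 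This simultaneously yields $I^nS_\mathfrak{p}\cap S=(IS_\mathfrak{p}\cap S)^n=(IS_\mathfrak{p})^n\cap S$, the first two claimed equalities.

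For the last equality, $(\mathfrak{q}_{\subset\mathfrak{p}})^n=(\mathfrak{q}_{\subset\mathfrak{p}})^{\langle n\rangle}$, the plan is to observe that $J=\mathfrak{q}_{\subset\mathfrak{p}}$ has the property that $\mathfrak{p}$ is its unique maximal associated prime: indeed $\mathrm{Ass}(J)=\{\mathfrak{p}_i:\mathfrak{p}_i\subset\mathfrak{p}\}$, all of which are contained in $\mathfrak{p}$, and $\mathfrak{p}\in\mathrm{Ass}(J)$ since $\mathfrak{p}\in\mathrm{Ass}(I)$ forces $\mathfrak{p}$ to appear among the $\mathfrak{p}_i$ with $\mathfrak{p}_i\subset\mathfrak{p}$ (it is its own radical). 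Therefore $\mathfrak{p}$ contains all associated primes of $J$, so by Proposition~\ref{jan16-24} applied to $J^n$ (whose associated primes are still contained in $\mathfrak{p}$ by Proposition~\ref{jan20-24}(a)), one has $J^nS_\mathfrak{p}\cap S=J^n$; and by definition $J^{\langle n\rangle}=\bigcap_{\mathfrak{q}\in\mathrm{maxAss}(J)}(J^nS_\mathfrak{q}\cap S)=J^nS_\mathfrak{p}\cap S=J^n$, which is the desired identity.

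The only mildly delicate point—the ``main obstacle''—is making sure $\mathfrak{p}$ really is an associated prime of $J=\mathfrak{q}_{\subset\mathfrak{p}}$ (and not merely an over-prime of its components), so that $\mathrm{maxAss}(J)=\{\mathfrak{p}\}$ and the $\langle n\rangle$-symbolic power collapses to the genuine saturation; this follows because $\mathfrak{p}={\rm rad}(\mathfrak{q}_j)$ for the index $j$ with $\mathfrak{p}_j=\mathfrak{p}$, which index does satisfy $\mathfrak{p}_j\subset\mathfrak{p}$. Everything else is a routine combination of the cited propositions, and no new computation is needed beyond invoking that localization commutes with powers and finite intersections of monomial ideals.
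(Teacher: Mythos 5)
Your proposal is correct and follows essentially the same route as the paper's proof: identify $\mathfrak{q}_{\subset\mathfrak{p}}=IS_\mathfrak{p}\cap S$ via Proposition~\ref{dec17-23-coro}, observe via Proposition~\ref{stanprim-iff} that its generators live in $K[\mathcal{G}(\mathfrak{p})]$, apply Proposition~\ref{jan20-24} with $\mathfrak{N}=\mathfrak{p}$ to get saturation of the powers, and use ${\rm maxAss}(\mathfrak{q}_{\subset\mathfrak{p}})=\{\mathfrak{p}\}$ together with the fact that localization commutes with powers to collapse all four expressions. The only cosmetic difference is that the paper explicitly invokes Lemma~\ref{intofmonoismono} to justify that the generators of the intersection remain supported on $\mathcal{G}(\mathfrak{p})$, a point you assert slightly more informally but with the same underlying reasoning.
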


\begin{proof} Note that, by Proposition~\ref{dec17-23-coro},
$\bigcap_{\mathfrak{p}_i\subset\mathfrak{p}} 
\mathfrak{q}_i$ is a minimal primary decomposition of 
$\mathfrak{q}_{\subset\mathfrak{p}}$, where $\mathfrak{p}_i$ is the
radical of $\mathfrak{q}_i$ for all $i$, and ${\rm
maxAss}(\mathfrak{q}_{\subset\mathfrak{p}})=\{\mathfrak{p}\}$. By
Proposition~\ref{stanprim-iff},
$\mathcal{G}(\mathfrak{q}_i)\subset K[\mathcal{G}(\mathfrak{p}_i)]$
for $i=1,\ldots,r$. 
Then, by Lemma~\ref{intofmonoismono}, one has
$$\mathcal{G}(\mathfrak{q}_{\subset\mathfrak{p}})\subset
K[\cup_{\mathfrak{p}_i\subset\mathfrak{p}}\mathcal{G}(\mathfrak{p}_i)]=
K[\mathcal{G}(\mathfrak{p})].
$$
\quad Making $V=\mathcal{G}(\mathfrak{p})$ and
$\mathfrak{N}=(V)=\mathfrak{p}$, by Proposition~\ref{jan20-24}(a), all
associated primes of $(\mathfrak{q}_{\subset\mathfrak{p}})^n$ are
contained in $\mathfrak{p}$ and, by the equality ${\rm
maxAss}(\mathfrak{q}_{\subset\mathfrak{p}})=\{\mathfrak{p}\}$ 
and Proposition~\ref{jan20-24}(b), one has
$$(\mathfrak{q}_{\subset\mathfrak{p}})^{\langle
n\rangle}=((\mathfrak{q}_{\subset\mathfrak{p}})^n)S_\mathfrak{p}\cap S=
((\mathfrak{q}_{\subset\mathfrak{p}})S_\mathfrak{p})^n\cap S=
(\mathfrak{q}_{\subset\mathfrak{p}})^n\mbox{ for all }n\geq 1.
$$
\quad Hence, since powers of ideals commute with localizations, 
by Proposition~\ref{dec17-23-coro}, we get 
\begin{align*}
I^nS_\mathfrak{p}\cap S=&(IS_\mathfrak{p})^n\cap
S=((\mathfrak{q}_{\subset\mathfrak{p}})S_\mathfrak{p})^n\cap S
=((\mathfrak{q}_{\subset\mathfrak{p}})^n)S_\mathfrak{p}\cap S=
(\mathfrak{q}_{\subset\mathfrak{p}})^n
\end{align*}
and $\mathfrak{q}_{\subset\mathfrak{p}}=IS_\mathfrak{p}\cap S$. 
\end{proof}

\begin{corollary}\cite[Theorem~3.7]{cooper-etal}\label{dec17-23-2}
Let $I\subset S$ be a monomial ideal and let $I=\bigcap_{i=1}^r\mathfrak{q}_i$
be a minimal primary decomposition of $I$ with ${\rm
rad}(\mathfrak{q}_i)=\mathfrak{p}_i$. Then 
$$
I^{\langle n\rangle}:=\bigcap_{i=1}^r(I^nS_{\mathfrak{p}_i}\cap S)=
\bigcap_{\mathfrak{p}_i\in {\rm maxAss}(I)}\hspace{-4mm}(I^nS_{\mathfrak{p}_i}\cap
S)=\bigcap_{\mathfrak{p}_i\in {\rm
maxAss}(I)}\hspace{-4mm}(\mathfrak{q}_{\subset\mathfrak{p}_i})^n\ \mbox{ for all }\ 
n\geq 1.
$$
\end{corollary}

\begin{proposition}\label{ntf-char-coro} 
Let $I$ be a monomial ideal of $S$. Then, $I^n=I^{(n)}$ for all $n\geq 1$ if and only 
if $I$ has no embedded primes and ${\rm Ass}(I^n)\subset{\rm Ass}(I)$ for all $n\geq 1$.
\end{proposition}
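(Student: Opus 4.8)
\textbf{Proof proposal for Proposition~\ref{ntf-char-coro}.}

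The plan is to prove both implications using the canonical decomposition of Eq.~\eqref{oct28-23} together with Lemma~\ref{anoth-one-char-spow-general}, which expresses $I^{(n)}$ as the intersection of the $n$-th powers of the primary components attached to the minimal primes of $I$. Write $I=\mathfrak{q}_1\cap\cdots\cap\mathfrak{q}_r$ for this decomposition, with ${\rm rad}(\mathfrak{q}_i)=\mathfrak{p}_i$, and let $\mathfrak{q}_1,\ldots,\mathfrak{q}_p$ be the components corresponding to the minimal primes, so that $I^{(n)}=\mathfrak{q}_1^n\cap\cdots\cap\mathfrak{q}_p^n$ for all $n\geq 1$ by Lemma~\ref{anoth-one-char-spow-general}. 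The key auxiliary observation I would establish first is that, for a monomial ideal, one always has ${\rm Ass}(I^n)\supset{\rm Min}(I)={\rm minAss}(I)$ (the minimal primes of $I$ persist in every power, since localizing $I^n$ at a minimal prime $\mathfrak{p}_i$ gives $\mathfrak{q}_i^n$, a proper $\mathfrak{p}_i$-primary ideal, so $\mathfrak{p}_i\in{\rm Ass}(I^n)$).

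For the forward direction, assume $I^n=I^{(n)}$ for all $n\geq 1$. Taking $n=1$ gives $I=I^{(1)}$, and since $I^{(1)}$ is by definition the intersection of the non-embedded primary components of $I$, the canonical decomposition forces $r=p$, i.e.\ $I$ has no embedded primes. Now fix $n$. From $I^n=I^{(n)}=\mathfrak{q}_1^n\cap\cdots\cap\mathfrak{q}_p^n$, and using that each $\mathfrak{q}_i^n$ is $\mathfrak{p}_i$-primary (Proposition~\ref{stanprim-iff}), I obtain a primary decomposition of $I^n$ whose radicals are exactly $\mathfrak{p}_1,\ldots,\mathfrak{p}_p\in{\rm Ass}(I)$; after passing to a minimal such decomposition one only removes components, so ${\rm Ass}(I^n)\subset\{\mathfrak{p}_1,\ldots,\mathfrak{p}_p\}={\rm Ass}(I)$. (Here I use that ${\rm rad}(\mathfrak{q}_i^n)=\mathfrak{p}_i$ are pairwise distinct, so the decomposition is already radical-minimal; the standard fact that an irredundant refinement has associated primes among the $\mathfrak{p}_i$ handles any redundancy in the intersection itself.)

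For the converse, assume $I$ has no embedded primes and ${\rm Ass}(I^n)\subset{\rm Ass}(I)$ for all $n\geq 1$. Since $I$ has no embedded primes, $r=p$ and ${\rm Ass}(I)={\rm minAss}(I)=\{\mathfrak{p}_1,\ldots,\mathfrak{p}_p\}$, so Lemma~\ref{anoth-one-char-spow-general} gives $I^{(n)}=\mathfrak{q}_1^n\cap\cdots\cap\mathfrak{q}_p^n$ where now \emph{all} primary components appear. The inclusion $I^n\subset I^{(n)}$ is automatic, so it remains to show $I^{(n)}\subset I^n$. Take $t^a\in I^{(n)}$ and consider a minimal primary decomposition $I^n=\bigcap_j Q_j$ with ${\rm rad}(Q_j)\in{\rm Ass}(I^n)\subset{\rm Ass}(I)=\{\mathfrak{p}_1,\ldots,\mathfrak{p}_p\}$, all of which are minimal primes of $I^n$ (as they are minimal over $I^n$ since they are minimal over $I$); hence $I^n$ itself has no embedded primes, and $Q_j=I^nS_{{\rm rad}(Q_j)}\cap S$ by Proposition~\ref{dec17-23-coro}. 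For the index $j$ with ${\rm rad}(Q_j)=\mathfrak{p}_i$, Corollary~\ref{dec17-23-1} (with $\mathfrak{p}=\mathfrak{p}_i$, noting $\mathfrak{q}_{\subset\mathfrak{p}_i}=\mathfrak{q}_i$ because $\mathfrak{p}_i$ is minimal) gives $Q_j=I^nS_{\mathfrak{p}_i}\cap S=\mathfrak{q}_i^n$. Therefore $I^n=\bigcap_{i=1}^p\mathfrak{q}_i^n=I^{(n)}$, so $t^a\in I^n$, completing the proof.

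The main obstacle I anticipate is the bookkeeping in the forward direction: passing from the (possibly redundant) intersection $\mathfrak{q}_1^n\cap\cdots\cap\mathfrak{q}_p^n$ to an honest \emph{minimal} primary decomposition of $I^n$ and arguing that no new associated prime is created in the process. This is where one must invoke carefully that discarding superfluous components from a primary decomposition, or merging those with the same radical, can only shrink the set of radicals — so ${\rm Ass}(I^n)$ stays inside $\{\mathfrak{p}_1,\ldots,\mathfrak{p}_p\}$ — combined with the persistence fact ${\rm Min}(I)\subset{\rm Ass}(I^n)$ to pin down exactly which primes survive. Everything else is a direct application of Lemma~\ref{anoth-one-char-spow-general}, Proposition~\ref{dec17-23-coro}, and Corollary~\ref{dec17-23-1}.
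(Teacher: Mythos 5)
Your proposal is correct and follows essentially the same route as the paper: the forward direction extracts ``no embedded primes'' from $I=I^{(1)}$ and reads off ${\rm Ass}(I^n)\subset{\rm Ass}(I)$ from the primary decomposition $I^n=I^{(n)}=\mathfrak{q}_1^n\cap\cdots\cap\mathfrak{q}_p^n$ supplied by Lemma~\ref{anoth-one-char-spow-general}, while the converse uses the chain ${\rm Min}(I)={\rm Min}(I^n)\subset{\rm Ass}(I^n)\subset{\rm Ass}(I)={\rm minAss}(I)$ and Proposition~\ref{dec17-23-coro} to identify the primary components of $I^n$ with those defining $I^{(n)}$. Your extra care about irredundancy in the forward direction and your detour through Corollary~\ref{dec17-23-1} in the converse are harmless refinements of the same argument.
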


\begin{proof} Let $I=\bigcap_{i=1}^r\mathfrak{q}_i$ be
a minimal primary decomposition of $I$ with ${\rm
rad}(\mathfrak{q}_i)=\mathfrak{p}_i$ for all $i$. 

$\Rightarrow$) Since
$I=I^{(1)}$, 
$I$ has no
embedded primes because ${\rm Ass}(I)$ is independent
of the minimal primary decomposition of $I$ that we choose \cite{AM}. By
Proposition~\ref{stanprim-iff}, $\mathfrak{q}_i^n$ is
$\mathfrak{p}_i$-primary for all $i$, and by
Lemma~\ref{anoth-one-char-spow-general}, we get the following primary
decomposition of $I^n$:
$$  
I^n=I^{(n)}=\mathfrak{q}_1^n\cap\cdots\cap\mathfrak{q}_r^n.
$$
\quad Hence, ${\rm Ass}(I^n)\subset{\rm Ass}(I)$ for all $n\geq 1$. 

$\Leftarrow$) As $I$ has no embedded primes, one has 
$$
\{\mathfrak{p}_1,\ldots,\mathfrak{p}_r\}={\rm Ass}(I)=
{\rm minAss}(I)={\rm Min}(I)={\rm Min}(I^n)\subset{\rm
Ass}(I^n)\subset{\rm Ass}(I)
$$
for all $n\geq 1$. Hence, $I^n$ has a unique minimal primary decomposition
$I^n=\bigcap_{i=1}^rQ_i$ with ${\rm rad}(Q_i)=\mathfrak{p}_i$ for
$i=1,\ldots,r$. Therefore, by Proposition \ref{dec17-23-coro}, we get
$$
I^{(n)}=\bigcap_{i=1}^r(I^nS_{\mathfrak{p}_i}\cap
S)=\bigcap_{i=1}^r(Q_iS_{\mathfrak{p}_i}\cap
S)=\bigcap_{i=1}^rQ_i=I^n,
$$
and the proof is complete.
\end{proof}

\begin{corollary}\label{simis-w} Let $w$ be a standard linear
weighting, let $I$
be a monomial ideal of $S$, and let $I_w$ be its weighted monomial
ideal. The following hold.
\begin{enumerate}
\item[(a)] If $I$ has no embedded primes, 
then $I$ is a Simis ideal in degree $n$ if and only if $I_w$ is a
Simis ideal in degree $n$.
\item[(b)] {\rm(cf. \cite[Lemma~1]{jtt1})} $I$ is a Simis ideal if and only if $I_w$ is a Simis ideal.
\item[(c)] ${\rm Ass}(I^n)={\rm Ass}((I_w)^n)$ for all $n\geq 1$. 
\item[(d)] \cite[Theorem~3.10]{nasernejad-w} $I$ is normally torsion-free if and only if $I_w$ is
normally torsion-free.
\item[(e)] \cite[Lemma~4]{cm-oriented-trees} 
$I$ is Cohen--Macaulay if and only if $I_w$ is
Cohen--Macaulay.
\item[(f)] {\rm(\cite[p.~536]{oriented-graphs}, 
\cite[Lemma~3.9]{nasernejad-w})} $I$ is unmixed if and only if $I_w$ is
unmixed.
\end{enumerate}
\end{corollary}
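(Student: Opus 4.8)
The plan is to reduce all six statements to a single fact about standard weightings: passing from a monomial ideal to its weighted ideal preserves the minimal primary decomposition \emph{together with the radicals of the primary components}, and hence preserves the set of associated primes. Concretely, for any monomial ideal $J$ with minimal primary decomposition $J=\mathfrak{q}_1\cap\cdots\cap\mathfrak{q}_r$, Corollary~\ref{i-ip-coro} gives that $J_w=(\mathfrak{q}_1)_w\cap\cdots\cap(\mathfrak{q}_r)_w$ is the minimal primary decomposition of $J_w$; writing $\mathfrak{q}_i$ in the normal form of Proposition~\ref{stanprim-iff} as $(t_1^{a_1},\ldots,t_k^{a_k},t^{b_1},\ldots,t^{b_\ell})$ with ${\rm supp}(t^{b_j})\subset\{t_1,\ldots,t_k\}$, a standard weighting $w$ with $w(t_i)=d_i\geq 1$ sends it to $(\mathfrak{q}_i)_w=(t_1^{d_1a_1},\ldots,t_k^{d_ka_k},t^{w(b_1)},\ldots,t^{w(b_\ell)})$, and since each $w(b_j)$ has the same support as $b_j$, this ideal has radical $(t_1,\ldots,t_k)={\rm rad}(\mathfrak{q}_i)$. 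Therefore ${\rm Ass}(J_w)={\rm Ass}(J)$ as sets of primes, and $J$ has an embedded prime if and only if $J_w$ does. Applying this with $J=I^n$ and combining with $(I_w)^n=(I^n)_w$ from Lemma~\ref{comm-lw} yields part (c): ${\rm Ass}((I_w)^n)={\rm Ass}((I^n)_w)={\rm Ass}(I^n)$ for all $n\geq 1$.

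Given (c), parts (d) and (f) are immediate: normal torsion-freeness is the statement ${\rm Ass}(I^n)\subset{\rm Ass}(I)$ for all $n$, and both sides are unchanged upon replacing $I$ by $I_w$; unmixedness depends only on the set of associated primes (heights being intrinsic), so it too is preserved. For part (a), I would assume $I$ has no embedded primes and fix $n$; then $I_w$ has no embedded primes by the first paragraph, so Lemma~\ref{anoth-one-char-spow-general} gives $I^{(n)}=\mathfrak{q}_1^n\cap\cdots\cap\mathfrak{q}_r^n$ and $(I_w)^{(n)}=((\mathfrak{q}_1)_w)^n\cap\cdots\cap((\mathfrak{q}_r)_w)^n$. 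Thus ``$I$ Simis in degree $n$'' is exactly the equality $I^n=\bigcap_{i=1}^r\mathfrak{q}_i^n$ and ``$I_w$ Simis in degree $n$'' is exactly $(I_w)^n=\bigcap_{i=1}^r((\mathfrak{q}_i)_w)^n$, and Theorem~\ref{i-iw-thm} asserts precisely that these two equalities are equivalent. For part (b), observe that if $I$ (resp.\ $I_w$) is Simis then in particular $I=I^{(1)}$ (resp.\ $I_w=(I_w)^{(1)}$), which forces the ideal in question to have no embedded primes since a minimal primary decomposition is irredundant; by the first paragraph this property passes between $I$ and $I_w$, and then (b) follows by applying (a) for every $n\geq 1$.

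Part (e) I would handle separately by base change, since it does not fit the primary-decomposition pattern. Writing $d_i=w(t_i)$ and letting $\phi\colon S\to S$ be the ring map $t_i\mapsto t_i^{d_i}$, the subring $S^{(w)}:=\phi(S)=K[t_1^{d_1},\ldots,t_s^{d_s}]$ is one over which $S$ is a module-finite free extension (basis: the monomials $t^a$ with $0\leq a_i<d_i$), $\phi$ is an isomorphism onto $S^{(w)}$, and $I_w=\phi(I)S$ with $\phi(I)S\cap S^{(w)}=\phi(I)$. Hence $S/I_w$ is a module-finite free extension of $S^{(w)}/\phi(I)\cong S/I$, and such extensions preserve both Krull dimension and depth (a homogeneous regular system of parameters for $S/I$ remains one for $S/I_w$), so $S/I$ is Cohen--Macaulay if and only if $S/I_w$ is; alternatively one may simply invoke \cite[Lemma~4]{cm-oriented-trees}.

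The only genuine subtlety I anticipate is the bookkeeping in the first paragraph — specifically, verifying that a standard weighting leaves the radical of a primary monomial ideal literally unchanged, so that ${\rm Ass}(I)$ and ${\rm Ass}(I_w)$ coincide as sets of primes and not merely in bijection. Once that point is settled, everything else is either a direct appeal to Theorem~\ref{i-iw-thm} and Lemma~\ref{anoth-one-char-spow-general} or a soft argument about associated primes and depth.
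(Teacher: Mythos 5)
Your proposal is correct and follows essentially the same route as the paper: establish that $I_w=(\mathfrak{q}_1)_w\cap\cdots\cap(\mathfrak{q}_r)_w$ (and likewise $(I_w)^n=(I^n)_w$) is a minimal primary decomposition with unchanged radicals, deduce (c) and hence (d) and (f), and obtain (a) and (b) from Lemma~\ref{anoth-one-char-spow-general} together with Theorem~\ref{i-iw-thm}. The only difference is cosmetic: you spell out via Proposition~\ref{stanprim-iff} why $(\mathfrak{q}_i)_w$ stays $\mathfrak{p}_i$-primary, and you supply a free-base-change argument for (e), which the paper simply cites from \cite[Lemma~4]{cm-oriented-trees}.
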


\begin{proof} Let $I=\bigcap_{i=1}^r\mathfrak{q}_i$ be the minimal primary
decomposition of $I$ in Eq.~\eqref{oct28-23} with ${\rm
rad}(\mathfrak{q}_i)=\mathfrak{p}_i$ for $i=1,\ldots,r$. Noticing that
$(\mathfrak{q}_i)_w$ is $\mathfrak{p}_i$-primary, by
Proposition~\ref{i-iw-lem} and Lemma~\ref{iw=jw}, we get
\begin{equation}\label{feb4-24}
I_w=(\mathfrak{q}_1)_w\cap\cdots\cap(\mathfrak{q}_r)_w
\end{equation}
and this is a minimal primary decomposition of $I_w$. 

(a) $\Rightarrow$) Assume that $I$ is Simis in degree $n$, that is, 
$I^n=I^{(n)}$. As $I$ has no embedded primes, 
by Eq.~\eqref{feb4-24}, Lemma~\ref{anoth-one-char-spow-general} and Theorem~\ref{i-iw-thm},
we obtain
$$
(I_{w})^{(n)}=((\mathfrak{q}_1)_w)^n\cap\cdots\cap((\mathfrak{q}_r)_w)^n=(I_w)^n,
$$
that is, $I_w$ is Simis in degree $n$ and the proof is complete. 

$\Leftarrow$) Assume that $I_w$ is Simis in degree $n$, that is, 
$(I_w)^{(n)}=(I_w)^n$. 
As $I$ has no embedded primes, by Eq.~\eqref{feb4-24} and 
Lemma~\ref{anoth-one-char-spow-general}, we get 
$$
(I_w)^n=(I_w)^{(n)}=((\mathfrak{q}_1)_w)^n\cap\cdots\cap((\mathfrak{q}_r)_w)^n,
$$
and by Lemma~\ref{anoth-one-char-spow-general} and 
Theorem~\ref{i-iw-thm}, we get $I^n=I^{(n)}$, that is, $I$ is Simis in
degree $n$.

(b) $\Rightarrow$) Assume that $I^n=I^{(n)}$ for all $n\geq 1$. Since
$I=I^{(1)}$, $I$ has no embedded primes. Then, by part (a), 
we obtain $(I_{w})^{(n)}=(I_w)^n$
for all $n\geq 1$ and the proof is complete. 

$\Leftarrow$) Assume that $(I_w)^{(n)}=(I_w)^n$ for all $n\geq 1$. 
As $(I_w)^{(1)}=I_w$, $I_w$ and $I$ have no embedded primes. Then, by
part (a), $I^n=I^{(n)}$ for all $n\geq 1$. 

(c) Fix $n\in\mathbb{N}_+$. Let $I^n=\bigcap_{i=1}^p Q_i$ be a minimal primary
decomposition of the monomial ideal $I^n$ with ${\rm
rad}(Q_i)=P_i$ for $i=1,\ldots,p$. Then, by
Proposition~\ref{i-iw-lem} and Lemmas~\ref{comm-lw}--\ref{iw=jw}, we get
\begin{equation}
(I_w)^n=(I^n)_w=(Q_1)_w\cap\cdots\cap(Q_p)_w
\end{equation}
and this is a minimal primary decomposition of $(I_w)^n$. 
Hence, noticing that
$(Q_i)_w$ is $P_i$-primary for all $i$, we get ${\rm Ass}(I^n)=
\{P_1,\ldots,P_p\}={\rm Ass}((I_w)^n)$.

(d) and (f) follow from part (c). 
\end{proof}

The following result relates the notions of normally torsion-free ideals
and Simis ideals using the definition of symbolic powers in terms of 
the set of all associated primes.

\begin{proposition}\label{ntf-implies-simis-all-ass} Let $I\subset S$ be an ideal. If ${\rm
Ass}(I^n)\subset{\rm Ass}(I)$ for some $n\geq 1$, then 
$I^{\langle n\rangle}=I^n$. In particular, if $I$ is normally
torsion-free, then $I^{\langle n\rangle}=I^n$ for all $n\geq 1$.
\end{proposition}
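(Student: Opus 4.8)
The plan is to establish the nontrivial inclusion $I^{\langle n\rangle}\subset I^n$, since the reverse inclusion $I^n\subset I^{\langle n\rangle}$ always holds, as recorded in Section~\ref{intro-section}. Fix an $n$ for which ${\rm Ass}(I^n)\subset{\rm Ass}(I)$ and choose a minimal primary decomposition $I^n=\bigcap_{i=1}^pQ_i$ with ${\rm rad}(Q_i)=P_i$, so that ${\rm Ass}(I^n)=\{P_1,\dots,P_p\}$. The key observation is that each $P_i$ lies in ${\rm Ass}(I)$ by hypothesis, hence each $P_i$ occurs among the primes indexing the intersection that defines $I^{\langle n\rangle}$.

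First I would record, for every $i$, the inclusion $I^nS_{P_i}\cap S\subset Q_i$. This is immediate from Proposition~\ref{dec17-23}(ii): since $Q_i$ is $P_i$-primary and $(S\setminus P_i)\cap P_i=\emptyset$, one has $Q_iS_{P_i}\cap S=Q_i$; combining this with $I^n\subset Q_i$, so that $I^nS_{P_i}\subset Q_iS_{P_i}$, gives $I^nS_{P_i}\cap S\subset Q_iS_{P_i}\cap S=Q_i$. Alternatively, one may invoke Proposition~\ref{dec17-23-coro} applied to the ideal $I^n$ and the prime $P_i\in{\rm Ass}(I^n)$, which yields the sharper $I^nS_{P_i}\cap S=\bigcap_{P_j\subset P_i}Q_j\subset Q_i$.

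Then the conclusion follows quickly. Since $P_i\in{\rm Ass}(I)$, the term $I^nS_{P_i}\cap S$ appears in the intersection $I^{\langle n\rangle}=\bigcap_{\mathfrak{p}\in{\rm Ass}(I)}(I^nS_\mathfrak{p}\cap S)$, so $I^{\langle n\rangle}\subset I^nS_{P_i}\cap S\subset Q_i$. As this holds for every $i=1,\dots,p$, we obtain $I^{\langle n\rangle}\subset\bigcap_{i=1}^pQ_i=I^n$, and therefore $I^{\langle n\rangle}=I^n$. The final assertion is then immediate: if $I$ is normally torsion-free, then ${\rm Ass}(I^n)\subset{\rm Ass}(I)$ holds for every $n\geq 1$, so $I^{\langle n\rangle}=I^n$ for all $n$.

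There is no serious obstacle here; the only point requiring care is the bookkeeping with localization and contraction, in particular making sure that the extra terms $I^nS_\mathfrak{p}\cap S$ with $\mathfrak{p}\in{\rm Ass}(I)\setminus{\rm Ass}(I^n)$ are harmless — and indeed they can only shrink (or leave unchanged) the intersection, which is exactly what is needed for the inclusion $I^{\langle n\rangle}\subset I^n$. Note that no use of the monomial structure of $I$ is made; the statement and argument are valid for an arbitrary ideal of $S$.
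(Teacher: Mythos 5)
Your proposal is correct and follows essentially the same route as the paper: both arguments rest on the observation that each $P_i\in{\rm Ass}(I^n)$ lies in ${\rm Ass}(I)$ by hypothesis, so the term $I^nS_{P_i}\cap S$ occurs in the intersection defining $I^{\langle n\rangle}$ and is contained in the $P_i$-primary component $Q_i$ of $I^n$. The only difference is cosmetic: you invoke Proposition~\ref{dec17-23}(ii) to get $I^nS_{P_i}\cap S\subset Q_iS_{P_i}\cap S=Q_i$, whereas the paper unfolds that same fact by hand with an element-wise argument ($g_jf\in\mathfrak{q}_j'$ and $f\notin\mathfrak{q}_j'$ force $g_j\in\mathfrak{p}_j$, a contradiction).
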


\begin{proof} We may assume ${\rm
Ass}(I^n)=\{\mathfrak{p}_1,\ldots,\mathfrak{p}_{r_1}\}$, 
${\rm Ass}(I)=\{\mathfrak{p}_1,\ldots,\mathfrak{p}_{r}\}$ and $r_1\leq r$. Let
$$I^n=\mathfrak{q}_1'\cap\cdots\cap\mathfrak{q}_{r_1}'$$
be a minimal
primary decomposition of $I^n$ with $\mathfrak{q}_i'$ a
$\mathfrak{p}_i$-primary ideal for $i=1,\ldots,r_1$. 
 Since $I^{\langle n\rangle}\supset I^n$, it suffices to
show the inclusion $I^{\langle n\rangle}\subset I^n$. Take $f\in
I^{\langle n\rangle}$, that is, $f\in I^nS_{\mathfrak{p}_i}\textstyle\cap
S$ for $i=1,\ldots,r$. For each $1\leq i\leq r$, we can write
$f=f_i/g_i$, $f_i\in I^n$, $g_i\notin\mathfrak{p}_i$. Thus, 
$fg_i=f_i\in I^n$. If $f\notin\mathfrak{q}_j'$ for some $1\leq j\leq
r_1\leq r$, then $g_jf=f_j\in I^n$, $g_jf\in\mathfrak{q}_j'$, 
and $g_j^m\in\mathfrak{q}_j'$ for some $m\geq 1$. Thus, 
$g_j\in{\rm rad}(\mathfrak{q}_j')=\mathfrak{p}_j$, a contradiction.
Then, $f\in\mathfrak{q}_j'$ for all $1\leq j\leq r_1$, and $f\in I^n$.
\end{proof}

\begin{conjecture}\label{conjecture-simis} Let $I$ be a monomial ideal of $S$ without embedded
primes. If the irreducible decomposition of $I$ is minimal and $I$ is
a Simis ideal, then there is a Simis squarefree monomial ideal $J$
of $S$ and a standard linear weighting $w$ such that $I=J_w$.
\end{conjecture}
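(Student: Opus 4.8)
The plan is to reduce the conjecture to a structural statement about the exponents appearing in the irreducible components of $I$ and then to construct $J$ and $w$ by hand. Since $I$ has no embedded primes and its irreducible decomposition $I=I_1\cap\cdots\cap I_r$ is minimal, the canonical decomposition of Eq.~\eqref{oct28-23} is $I=\mathfrak{q}_1\cap\cdots\cap\mathfrak{q}_r$ with each $\mathfrak{q}_i=I_i$ irreducible, hence of pure-power form $\mathfrak{q}_i=(t_k^{b_k^{(i)}}\mid k\in C_i)$ where $\mathfrak{p}_i={\rm rad}(\mathfrak{q}_i)=(t_k\mid k\in C_i)$, and $\{\mathfrak{p}_1,\dots,\mathfrak{p}_r\}$ is an antichain. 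I would derive the conjecture from the following claim: \emph{if $I$ is Simis, then for every variable $t_k$ the exponent $b_k^{(i)}$ does not depend on the component $I_i$ with $k\in C_i$.} Granting the claim, put $d_k:=b_k^{(i)}$ for any $i$ with $k\in C_i$ (and $d_k:=1$ if $t_k$ lies in no $C_i$), let $w$ be the standard weighting with $w(t_k)=d_k$, and set $J:=\mathfrak{p}_1\cap\cdots\cap\mathfrak{p}_r$, a squarefree monomial ideal. Then $(\mathfrak{p}_i)_w=(t_k^{d_k}\mid k\in C_i)=\mathfrak{q}_i$, so by Proposition~\ref{i-iw-lem} we get $J_w=\bigcap_{i=1}^r(\mathfrak{p}_i)_w=\bigcap_{i=1}^r\mathfrak{q}_i=I$, which is its minimal primary decomposition by Corollary~\ref{i-ip-coro}. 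Since $J_w=I$ is Simis and $w$ is standard, Corollary~\ref{simis-w}(b) gives that $J$ is Simis, which proves the conjecture.

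So the entire content is the claim, which I would attack by contradiction. Suppose a variable, say $t_1$, satisfies $b_1^{(i)}=a<b=b_1^{(j)}$ for some $i\neq j$; since $\mathfrak{p}_j\not\subset\mathfrak{p}_i$ there is $t_3\in C_j\setminus C_i$, say with exponent $d=b_3^{(j)}$. Consider first the case $r=2$, modelled on $\mathfrak{q}_i=(t_1^a,t_2^c)$ and $\mathfrak{q}_j=(t_1^b,t_3^d)$. Using Lemma~\ref{intofmonoismono} one computes $\mathcal{G}(I)=\{t_1^b,\,t_1^at_3^d,\,t_2^ct_3^d\}$, and using the membership test $t^\gamma\in\mathfrak{q}_k^n\iff\sum_{\ell\in C_k}\lfloor\gamma_\ell/b_\ell^{(k)}\rfloor\ge n$ one checks that the monomial $t_1^{\max(2a,b)}t_3^{d}$ lies in $\mathfrak{q}_i^{2}\cap\mathfrak{q}_j^{2}$, which equals $I^{(2)}$ by Lemma~\ref{anoth-one-char-spow-general}. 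On the other hand it is not in $I^{2}$, because the only products of two elements of $\mathcal{G}(I)$ involving no variable besides $t_1$ and $t_3$ are $t_1^{2b}$, $t_1^{a+b}t_3^{d}$ and $t_1^{2a}t_3^{2d}$, and $t_1^{\max(2a,b)}t_3^{d}$ is divisible by none of them (note $\max(2a,b)<a+b<2b$). Hence $I$ is not Simis in degree $2$, which settles the claim for $r=2$.

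For general $r$ I would try the same scheme: choose $n$ and a monomial $t^\gamma$, supported on $t_1$ together with a cheap variable from each of the remaining components, so that $\sum_{\ell\in C_k}\lfloor\gamma_\ell/b_\ell^{(k)}\rfloor\ge n$ for every $k$ while $t^\gamma$ is not a product of $n$ elements of $\mathcal{G}(I)$ — equivalently, by Lemma~\ref{icd}, while $\gamma$ lies just outside $n$ times the Newton polyhedron of $I$ in the $t_1$-direction. The hard part, which I expect to be the main obstacle, is making this choice robust: when $I$ has several components with overlapping supports, keeping $t^\gamma$ inside every $\mathfrak{q}_k^n$ without inadvertently pushing it into $I^n$ requires controlling all the covering inequalities at once, a packing/integrality phenomenon of the kind studied in \cite{clutters,reesclu}. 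The route I would pursue is to prove that Simis-ness forces the rational polyhedron $\{x\ge 0\mid\sum_{\ell\in C_i}x_\ell/b_\ell^{(i)}\ge 1\text{ for all }i\}$ to be integral and that integrality of this polyhedron makes the exponents on each variable agree across the components containing it, with Corollary~\ref{char-ntf-w} presumably recording what can currently be proved in this direction. An alternative is to localize at monomial primes $\mathfrak{p}$ with $\mathfrak{p}_i,\mathfrak{p}_j\subset\mathfrak{p}$ — Simis-ness passes to $IS_\mathfrak{p}$ — in order to cut down the number of relevant components, though such a $\mathfrak{p}$ need not isolate $\mathfrak{p}_i$ and $\mathfrak{p}_j$ from the others.
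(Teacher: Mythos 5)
This statement is a \emph{conjecture} in the paper: the authors offer no proof, only supporting evidence (Corollary~\ref{char-ntf-w}) and the observation in Example~\ref{generalized-edge-ideal-example} that minimality of the irreducible decomposition cannot be dropped. So there is nothing in the paper to compare your argument against, and the question is simply whether your attempt closes an open problem. It does not. Your reduction is correct and worth stating: since the irreducible decomposition is minimal and there are no embedded primes, the components are $\mathfrak{q}_i=(t_k^{b_k^{(i)}}\mid k\in C_i)$ with $\{\mathfrak{p}_i\}$ an antichain, and the conjecture is \emph{equivalent} to your claim that $b_k^{(i)}$ is independent of $i$ (one direction is your construction of $J$ and $w$ via Proposition~\ref{i-iw-lem} and Corollary~\ref{simis-w}(b); the other is immediate because the irreducible components of $J_w$ are $(\mathfrak{p}_i)_w=(t_k^{d_k}\mid k\in C_i)$). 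Your verification of the claim for $r=2$ is also correct: the test monomial $t_1^{\max(2a,b)}t_u^{d}$, with $t_u\in C_j\setminus C_i$, lies in $\mathfrak{q}_i^2\cap\mathfrak{q}_j^2=I^{(2)}$ but in no product of two minimal generators of $I$, since the only such products supported on $\{t_1,t_u\}$ are $t_1^{2b}$, $t_1^{a+b}t_u^{d}$ and $t_1^{2a}t_u^{2d}$, and $\max(2a,b)<a+b$.

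The gap is exactly where you say it is: for $r\geq 3$ the witness monomial must be placed in $\mathfrak{q}_k^n$ for \emph{every} component, which forces you to load it with variables from components not containing $t_1$, and each added variable brings new minimal generators of $I$ into play for the non-membership in $I^n$. You do not resolve this; you only sketch two strategies (integrality of the covering polyhedron, localization at a monomial prime containing $\mathfrak{p}_i$ and $\mathfrak{p}_j$) and explicitly flag that neither is carried out and that the localization need not isolate the two offending components. Since the constancy claim for general $r$ \emph{is} the conjecture, the proposal is a correct reformulation plus a base case, not a proof. If you want to push the localization idea, note that Simis-ness does pass to $IS_\mathfrak{p}\cap S=\mathfrak{q}_{\subset\mathfrak{p}}$ by Corollary~\ref{dec17-23-1} and Lemma~\ref{anoth-one-char-spow-general}, so you may assume every $\mathfrak{p}_k$ lies under a fixed monomial prime containing $\mathfrak{p}_i+\mathfrak{p}_j$; but, as you observe, this does not reduce $r$ to $2$, and that is where the problem currently stands.
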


The following results give some support for this conjecture and shows
how to construct non-squarefree Simis monomial ideals. 

\begin{corollary}{\rm(\cite[Corollary~4.8]{Mandal-Pradhan1},
\cite{ITG})}\label{char-ntf-w}
Let $w$ be a standard linear weighting of $S$, let $G$ be a graph with
vertex set $\{t_1,\ldots,t_s\}$, and let $I=I(G)$ be the edge ideal
of $G$. The following conditions are equivalent.
\begin{enumerate}
\item[(a)] $I^{(n)}=I^n$ for all $n\geq 1$;
\item[(b)] $(I_w)^{(n)}=(I_w)^n$ for all $n\geq 1$;
\item[(c)] $G$ is a bipartite graph.
\end{enumerate}
\end{corollary}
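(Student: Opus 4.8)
\textbf{Proof proposal for Corollary~\ref{char-ntf-w}.}

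The plan is to establish the cycle of implications (c) $\Rightarrow$ (a) $\Rightarrow$ (b) $\Rightarrow$ (c), leaning on known results about edge ideals of bipartite graphs together with the machinery already developed in the excerpt. The implication (c) $\Rightarrow$ (a) is the classical statement that the edge ideal of a bipartite graph $G$ satisfies $I(G)^{(n)}=I(G)^n$ for all $n$; this is quoted in the introduction via \cite[Theorem~5.9]{ITG} (equivalently, it follows from the fact that bipartite graphs have the max-flow min-cut property, \cite[Corollary~3.14]{clutters}). Alternatively one can cite that $I(G)$ is normally torsion-free iff $G$ is bipartite \cite[Theorem~5.9]{ITG}, and combine with Proposition~\ref{ntf-char-coro} after noting that $I(G)$, being squarefree, has no embedded primes.

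For (a) $\Rightarrow$ (b): since $I=I(G)$ is a squarefree monomial ideal it has no embedded primes, so Corollary~\ref{simis-w}(a) applies directly with the fixed standard weighting $w$, giving that $I$ is Simis in degree $n$ if and only if $I_w$ is Simis in degree $n$; quantifying over all $n\geq 1$ yields the equivalence of (a) and (b). (One could equally invoke Corollary~\ref{simis-w}(b) for the full statement in one stroke.) For (b) $\Rightarrow$ (c): running the same equivalence backwards, $(I_w)^{(n)}=(I_w)^n$ for all $n$ forces $I^{(n)}=I^n$ for all $n$, i.e.\ (a); and then (a) $\Rightarrow$ (c) is again the converse direction of the bipartite characterization of Simis edge ideals \cite[Theorem~5.9]{ITG} (if $G$ has an odd cycle, that cycle furnishes an associated prime of some $I(G)^n$ not among the primes of $I(G)$, so $I(G)$ fails to be normally torsion-free, hence fails to be Simis by Proposition~\ref{ntf-char-coro}).

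The only genuinely new content is the bridge (a) $\Leftrightarrow$ (b), and this is essentially immediate from Corollary~\ref{simis-w} once we observe the hypothesis that $I(G)$ has no embedded primes; the combinatorial heavy lifting—(a) $\Leftrightarrow$ (c)—is entirely imported from \cite{ITG} (and \cite{clutters,hhtz}). Thus there is no real obstacle here: the corollary is a packaging of Corollary~\ref{simis-w}(a) with the known classification of bipartite graphs via their edge ideals. The one point to state carefully is that a standard linear weighting $w$ is bijective on $\mathbb{Q}_+^s$ (used implicitly through Lemma~\ref{iw=jw} inside Corollary~\ref{simis-w}), so the equivalence between $I$ and $I_w$ being Simis is exact and not merely one-directional.
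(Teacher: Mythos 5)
Your proposal is correct and matches the paper's own proof: both obtain (a) $\Leftrightarrow$ (c) from the classical result of \cite[Theorem~5.9]{ITG} and (a) $\Leftrightarrow$ (b) from Corollary~\ref{simis-w}(a) after noting that $I(G)$ has no embedded primes. The extra remarks on normal torsion-freeness are fine but not needed.
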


\begin{proof} The equivalence of (a) and (c) is a classical
result \cite[Theorem~5.9]{ITG}. As $I(G)$ has no embedded primes, the
equivalence of (a) and (b) follows 
at once from Corollary~\ref{simis-w}(a).
\end{proof}

\begin{proposition}\label{jan25-24}
If $I\subset S$ is a Simis monomial ideal and $k\in\mathbb{N}_+$, then
$I^k$ is a Simis ideal.
\end{proposition}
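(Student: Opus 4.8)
The plan is to reduce the statement to the formula for symbolic powers of monomial ideals in terms of the primary components at the minimal primes (Lemma~\ref{anoth-one-char-spow-general}). Since $I$ is Simis, $I=I^{(1)}$, so $I$ has no embedded primes; write its minimal primary decomposition $I=\mathfrak{q}_1\cap\cdots\cap\mathfrak{q}_p$ as in Eq.~\eqref{oct28-23}, with ${\rm rad}(\mathfrak{q}_i)=\mathfrak{p}_i$, every $\mathfrak{p}_i$ being a minimal prime of $I$.

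The first step is to identify the primary components of $I^k$ at its minimal primes. Since ${\rm rad}(I^k)={\rm rad}(I)$, one has ${\rm Min}(I^k)={\rm Min}(I)=\{\mathfrak{p}_1,\ldots,\mathfrak{p}_p\}$, and by Proposition~\ref{stanprim-iff} each $\mathfrak{q}_i^k$ is a $\mathfrak{p}_i$-primary monomial ideal. Because $\mathfrak{p}_i$ is a minimal prime, Proposition~\ref{dec17-23-coro} gives $IS_{\mathfrak{p}_i}=\mathfrak{q}_iS_{\mathfrak{p}_i}$; taking $k$-th powers (powers commute with localization, \cite[p.~76]{monalg-rev}) and applying Proposition~\ref{dec17-23}(ii) to the $\mathfrak{p}_i$-primary ideal $\mathfrak{q}_i^k$ then yields
$$
I^kS_{\mathfrak{p}_i}\cap S=(IS_{\mathfrak{p}_i})^k\cap S=\mathfrak{q}_i^kS_{\mathfrak{p}_i}\cap S=\mathfrak{q}_i^k,\qquad i=1,\ldots,p.
$$
Hence $\mathfrak{q}_1^k,\ldots,\mathfrak{q}_p^k$ are precisely the primary components of $I^k$ corresponding to its minimal primes.

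The second step is to apply Lemma~\ref{anoth-one-char-spow-general} twice. Applied to the monomial ideal $I^k$ with the components just computed, it gives, for all $n\geq 1$,
$$
(I^k)^{(n)}=(\mathfrak{q}_1^k)^n\cap\cdots\cap(\mathfrak{q}_p^k)^n=\mathfrak{q}_1^{kn}\cap\cdots\cap\mathfrak{q}_p^{kn}.
$$
Applied to $I$ in degree $kn$ (using that $I$ has no embedded primes, so all of $\mathfrak{q}_1,\ldots,\mathfrak{q}_p$ are its minimal-prime components), it gives $I^{(kn)}=\mathfrak{q}_1^{kn}\cap\cdots\cap\mathfrak{q}_p^{kn}$, which equals $I^{kn}$ because $I$ is Simis. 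Comparing the two displays, $(I^k)^{(n)}=I^{kn}=(I^k)^n$ for every $n\geq 1$, hence $I^k$ is a Simis ideal.

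I do not expect a serious obstacle here; the only point demanding care is the verification that $\mathfrak{q}_i^k$ is genuinely the $\mathfrak{p}_i$-primary component of $I^k$ — not merely some $\mathfrak{p}_i$-primary ideal containing $I^k$ — which is exactly what the localization identity above certifies, together with the observation that $I$ being Simis forces $I$ to have no embedded primes, so that Lemma~\ref{anoth-one-char-spow-general} applies cleanly to both $I$ and $I^k$.
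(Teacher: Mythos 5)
Your proof is correct and follows essentially the same route as the paper: identify $\mathfrak{q}_1^k,\ldots,\mathfrak{q}_p^k$ as the minimal-prime primary components of $I^k$ and then apply Lemma~\ref{anoth-one-char-spow-general} to both $I^k$ and $I$ to get $(I^k)^{(n)}=\mathfrak{q}_1^{kn}\cap\cdots\cap\mathfrak{q}_p^{kn}=I^{(kn)}=I^{kn}=(I^k)^n$. The only cosmetic difference is that you certify the component identification by the localization identity $I^kS_{\mathfrak{p}_i}\cap S=\mathfrak{q}_i^k$, whereas the paper reads it off from the primary decomposition $I^k=I^{(k)}=\mathfrak{q}_1^k\cap\cdots\cap\mathfrak{q}_p^k$ furnished by the Simis hypothesis in degree $k$; both justifications are valid.
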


\begin{proof} Let 
$\mathfrak{q}_1,\ldots,\mathfrak{q}_p$ be the primary components
of $I$ corresponding to the  minimal primes 
of $I$ and let $\mathfrak{p}_i$ be the radical of $\mathfrak{q}_i$. Setting $J=I^k$, by
Lemma~\ref{anoth-one-char-spow-general}, one has
$$
J=I^k=I^{(k)}=\mathfrak{q}_1^k\cap\cdots\cap\mathfrak{q}_p^k.
$$
\quad As $\mathfrak{q}_i^k$ is again a $\mathfrak{p}_i$-primary ideal for
$i=1,\ldots,p$ (this follows from Proposition~\ref{stanprim-iff}), 
$\mathfrak{q}_1^k,\ldots,\mathfrak{q}_p^k$ are the primary components
of $J$ that correspond to the minimal primes of $J$. Hence, again by 
Lemma~\ref{anoth-one-char-spow-general}, we have
\begin{align*}
J^{(n)}=&(\mathfrak{q}_1^k)^n\cap\cdots\cap(\mathfrak{q}_p^k)^n=
\mathfrak{q}_1^{kn}\cap\cdots\cap \mathfrak{q}_p^{kn}=I^{(kn)}=I^{kn}=
(I^k)^n=J^n,
\end{align*} 
for all $n\geq 1$, and the proof is complete.
\end{proof}

\section{Symbolic powers of squarefree monomial
ideals}\label{symbolic-squarefree}
In this section, we give a structure theorem for
edge ideals of $d$-uniform clutters whose ideal of covers is Simis in
degree $d$ and give another
algebraic classification of bipartite graphs using the 2nd symbolic power of
ideals of covers of graphs.  To avoid
repetitions, we continue to employ 
the notations and definitions used in Section~\ref{intro-section}.

A vertex $t_i$ of a clutter $\mathcal{C}$ is called \textit{isolated}
if $t_i$ is not in any edge $e$ of $\mathcal{C}$.

\begin{theorem}\label{dual-d-simis}
Let $\mathcal{C}$ be a $d$-uniform clutter with vertex set
$V(\mathcal{C})=\{t_1,\ldots,t_s\}$ and let $I_c(\mathcal{C})$ be
its ideal of covers. If
$I_c(\mathcal{C})^{(d)}=I_c(\mathcal{C})^{d}$ and $\mathcal{C}$ has no
 isolated vertices, then there are 
mutually disjoint minimal vertex covers $C_1,\ldots,C_d$ of
$\mathcal{C}$ such that $V( \mathcal{C})=\bigcup_{i=1}^d C_i$ and
every edge of $\mathcal{C}$ has the form 
$e=\{t_{i_1},\ldots,t_{i_d}\}$, where $t_{i_j}\in C_j$ for all $j$.
\end{theorem}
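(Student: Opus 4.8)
The plan is to test membership in the symbolic power with the single squarefree monomial $u:=t_1\cdots t_s$. First I would recall that, being a squarefree monomial ideal, $I_c(\mathcal{C})$ has no embedded primes and admits the Alexander-dual minimal primary decomposition $I_c(\mathcal{C})=\bigcap_{e\in E(\mathcal{C})}\mathfrak{p}_e$, where $\mathfrak{p}_e:=(t_i\mid t_i\in e)$ (see \cite[p.~221]{monalg-rev}); here $E(\mathcal{C})\neq\emptyset$, since otherwise $V(\mathcal{C})=\emptyset$ and the statement is vacuous. By Lemma~\ref{anoth-one-char-spow-general}, this yields $I_c(\mathcal{C})^{(d)}=\bigcap_{e\in E(\mathcal{C})}\mathfrak{p}_e^{\,d}$.

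Next I would check that $u\in I_c(\mathcal{C})^{(d)}$. Fix $e\in E(\mathcal{C})$. Since $|e|=d$, the monomial $\prod_{t_i\in e}t_i$ is a squarefree monomial of degree $d$ supported on $e$, hence a minimal generator of $\mathfrak{p}_e^{\,d}$; as it divides $u$, we get $u\in\mathfrak{p}_e^{\,d}$. Therefore $u\in\bigcap_{e\in E(\mathcal{C})}\mathfrak{p}_e^{\,d}=I_c(\mathcal{C})^{(d)}$, and by hypothesis $u\in I_c(\mathcal{C})^{d}$.

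Then comes the combinatorial extraction. Writing $m_C:=\prod_{t_i\in C}t_i$ for a minimal vertex cover $C$, the ideal $I_c(\mathcal{C})^{d}$ is generated by the products $m_{C_1}\cdots m_{C_d}$ of $d$ generators of $I_c(\mathcal{C})$, so $u$ is a multiple of some such product with $C_1,\ldots,C_d$ minimal vertex covers. Squarefreeness of $u$ forces $m_{C_1}\cdots m_{C_d}$ to be squarefree, so the $C_j$ are pairwise disjoint and $\bigcup_{j=1}^d C_j\subseteq V(\mathcal{C})$. Given any edge $e$, each $C_j$ meets $e$, being a vertex cover; choosing $t_{i_j}\in C_j\cap e$ produces, by disjointness of the $C_j$, exactly $d$ distinct vertices of $e$, whence $e=\{t_{i_1},\ldots,t_{i_d}\}$ with $t_{i_j}\in C_j$ for all $j$. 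Finally, if some vertex $t_k$ did not lie in $\bigcup_{j}C_j$, then $t_k$ would belong to some edge $e$ (it is non-isolated), and the form just established for $e$ would give $t_k\in\bigcup_j C_j$, a contradiction; hence $V(\mathcal{C})=\bigcup_{j=1}^{d}C_j$, finishing the proof.

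There is no serious obstacle once the test element $u=t_1\cdots t_s$ is in hand; the argument is short. The only points requiring a little care are citing the Alexander-dual primary decomposition of $I_c(\mathcal{C})$ so that Lemma~\ref{anoth-one-char-spow-general} applies and gives $I_c(\mathcal{C})^{(d)}=\bigcap_e\mathfrak{p}_e^{\,d}$, and the observation that the $d$ vertices selected from an edge are forced to be distinct, which is exactly where $d$-uniformity and the disjointness of the covers are used.
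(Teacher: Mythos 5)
Your proof is correct and follows essentially the same route as the paper's: test the squarefree monomial $t_1\cdots t_s$ against the Alexander-dual decomposition $I_c(\mathcal{C})=\bigcap_{e}\mathfrak{p}_e$ together with Lemma~\ref{anoth-one-char-spow-general}, extract disjoint covers from a monomial factorization in $I_c(\mathcal{C})^d$, and use $d$-uniformity plus the absence of isolated vertices to finish. The only cosmetic difference is that you derive $V(\mathcal{C})=\bigcup_j C_j$ from the established form of the edges, whereas the paper first proves the covering of the vertex set via the inequality $|e|\geq d+1$; the two orderings are equivalent.
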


\begin{proof} We set $f=t_1t_2\cdots t_s$. Let
$e=\{t_{i_1},\ldots,t_{i_d}\}$ be any edge of $\mathcal{C}$. Then,
$t_{i_1}\cdots t_{i_d}\in(e)^d$, and consequently 
$f\in\bigcap_{e\in E(\mathcal{C})}(e)^d$. Since 
$I_c(\mathcal{C})=\bigcap_{e\in E(\mathcal{C})}(e)$, by 
Lemma~\ref{anoth-one-char-spow-general}, one has
$$
\bigcap_{e\in
E(\mathcal{C})}(e)^d=I_c(\mathcal{C})^{(d)}=I_c(\mathcal{C})^{d}.
$$ 
\quad Thus, $f\in I_c(\mathcal{C})^{d}$. Hence, we can write 
$f=t^\delta t_{C_1}\cdots t_{C_d}$, 
where $C_1,\ldots,C_d$ are minimal vertex covers of $\mathcal{C}$ and 
$t_{C_i}=\prod_{t_j\in C_i}t_j$ for all $i$. Since $f$ is squarefree, 
$C_1,\ldots,C_d$ are mutually disjoint. To show the equality $V(
\mathcal{C})=\bigcup_{i=1}^d C_i$ we argue by contradiction assuming 
$\bigcup_{i=1}^d C_i
\subsetneq V(\mathcal{C})$. Take $t_i\in V(\mathcal{C})\setminus
\bigcup_{i=1}^d C_i$. Since $\mathcal{C}$ has no isolated vertices, we
can pick an edge $e$ containing $t_i$. Since $e\cap C_i\neq\emptyset$
for $i=1,\ldots,d$ and $C_k \cap C_j=\emptyset$ for $k\neq j$, we
obtain $|e|\geq d+1$, a contradiction because all edges of
$\mathcal{C}$ contain exactly $d$-elements. Then, 
$V(\mathcal{C})=\bigcup_{i=1}^d C_i$. Now take any edge $e$ of
$\mathcal{C}$, then $|e|=d$ and $e\cap C_i\neq\emptyset$. Thus, 
$e=\{t_{i_1},\ldots,t_{i_d}\}$, where $t_{i_j}\in C_j$ for all $j$.
\end{proof}

\begin{proposition}\label{dec11-23}
Let $G$ be a graph without isolated vertices and 
let $I_c(G)$ be the ideal of covers of $G$. The following 
conditions are equivalent:
\begin{enumerate}
\item[(i)] $I_c(G)^{(2)}=I_c(G)^2$.
\item[(ii)] $G$ is a bipartite graph.
\end{enumerate}
\end{proposition}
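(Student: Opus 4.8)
The plan is to prove the equivalence in two directions, exploiting that $I_c(G) = J(G)$ is the Alexander dual of the edge ideal $I(G)$, so that the $2$nd symbolic power can be analyzed via the combinatorics of vertex covers. For the direction (i) $\Rightarrow$ (ii), I would apply Theorem~\ref{dual-d-simis} with $d = 2$: the hypothesis $I_c(G)^{(2)} = I_c(G)^2$ together with the absence of isolated vertices yields two disjoint minimal vertex covers $C_1, C_2$ with $V(G) = C_1 \cup C_2$ and every edge $e = \{t_{i_1}, t_{i_2}\}$ with $t_{i_j} \in C_j$. This is exactly a bipartition of $G$ into the parts $C_1$ and $C_2$, so $G$ is bipartite. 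This direction is essentially immediate from the structure theorem already proved.

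For the converse (ii) $\Rightarrow$ (i), I would use the known characterization of bipartite graphs in terms of edge ideals: by the classical result cited in the excerpt (\cite[Theorem~5.9]{ITG}, also Corollary~\ref{char-ntf-w}), $G$ is bipartite if and only if $I(G)$ is a Simis ideal, equivalently normally torsion-free, equivalently $I(G)$ has the König property on every induced subgraph / the max-flow min-cut property. The cleanest route is via the duality between $I(G)$ and $J(G) = I_c(G)$: by König's theorem for bipartite graphs, the clutter of minimal vertex covers of a bipartite graph has the packing property, and packing in degree $2$ translates into $I_c(G)^{(2)} = I_c(G)^2$. Concretely, I would compute $I_c(G)^{(2)} = \bigcap_{e \in E(G)} (e)^2$ via Lemma~\ref{anoth-one-char-spow-general} (since the minimal primes of $I_c(G)$ are exactly the $(e)$ for $e$ an edge, as $I_c(G)$ is the Alexander dual of $I(G)$), and then show a monomial $t^a \in \bigcap_e (e)^2$ lies in $I_c(G)^2$. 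Writing $a = a' + a''$ by splitting each coordinate, one reduces to covering an integer weighting with two minimal vertex covers; here I would invoke the LP-duality / total dual integrality that holds for bipartite graphs (equivalently, that the vertex cover polytope of a bipartite graph is integral by König's theorem) to decompose the "fractional cover" $a$ into an integral combination of minimal vertex covers realizing degree $2$.

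The main obstacle is the converse direction: turning "bipartite" into the equality $I_c(G)^{(2)} = I_c(G)^2$ requires an argument of max-flow min-cut / TDI type rather than a one-line combinatorial reduction. One must show that if $t^a$ lies in $(e)^2$ for every edge $e$ — i.e., $a$ restricted to the two endpoints of each edge has total degree $\geq 2$ — then $a$ is dominated by a sum $t_{C_1} t_{C_2}$ with $C_1, C_2$ minimal vertex covers. For a bipartite graph this follows from König's edge-cover/matching duality applied carefully (the relevant "covers of value $2$" correspond to feasible integral points, and integrality is guaranteed by bipartiteness); for non-bipartite $G$ an odd cycle provides a counterexample (assigning each vertex of the cycle value $1$ gives a point in $\bigcap_e (e)^2$ not in $I_c(G)^2$), which also explains why the hypothesis is needed. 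Alternatively, and perhaps more efficiently, I would cite the equivalence $I_c(G)$ Simis $\iff$ $G$ bipartite from \cite{alexdual, reesclu} and note that the degree-$2$ case is what is extracted here; but since the statement is only in degree $2$, I expect the self-contained argument above via König's theorem to be the intended one, with the technical heart being the integral decomposition of $a$.
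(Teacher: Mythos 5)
Your proposal is correct and matches the paper's argument: the direction (i) $\Rightarrow$ (ii) is exactly the paper's proof (apply Theorem~\ref{dual-d-simis} with $d=2$ and read off the bipartition $(C_1,C_2)$), and for (ii) $\Rightarrow$ (i) the paper simply cites \cite[Corollary~4.6]{alexdual}, which is the citation route you mention as an alternative. Your sketched self-contained K\H{o}nig/TDI argument for the converse is plausible (and your odd-cycle remark correctly locates where bipartiteness is used), but it is not carried out in the paper and would need the integer decomposition property of the vertex cover polytope to be made precise.
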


\begin{proof} (i) $\Rightarrow$ (ii)  The graph $G$ is a $2$-uniform
clutter and $I_c(G)^{(2)}$ is equal to $I_c(G)^2$. Then, by
Theorem~\ref{dual-d-simis}, there are minimal vertex covers 
$V_1,V_2$ of $G$ such that $V_1\cap V_2=\emptyset$, 
$V(G)=\bigcup_{i=1}^2 V_i$, and
every edge of $G$ has the form 
$e=\{t_{i_1},t_{i_2}\}$, where $t_{i_j}\in V_j$ for $j=1,2$. This
means that $(V_1,V_2)$ is a bipartition of the graph $G$, that is, 
$G$ is bipartite.

 (ii) $\Rightarrow$ (i) By \cite[Corollary~4.6]{alexdual}, one has
 $I_c(G)^{(n)}=I_c(G)^n$ for all $n\geq 1$. 
\end{proof}

\section{Symbolic powers of the dual of edge ideals of oriented
graphs}\label{symbolic-dual-section}

To avoid 
repetitions, we continue to employ 
the notations and definitions used in Sections~\ref{intro-section} and
\ref{prelim-section}.
In this section, we classify combinatorially 
when the dual of
the edge ideal of a weighted oriented graph is a Simis ideal in degree
$2$ and when it is a Simis ideal. 

\begin{theorem}\cite[Corollary~3.17, Theorem~4.6,
Proposition~4.27]{reesclu}\label{ntf-dual} If $G$ is a graph, then $J(G)$ is Simis if and
only if $G$ is bipartite. 
\end{theorem}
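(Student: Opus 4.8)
The plan is to prove the two implications separately: ``only if'' reduces to results already in the excerpt, while ``if'' gets a short self-contained argument. Recall first that $J(G)=I_c(G)=\bigcap_{\{t_i,t_j\}\in E(G)}(t_i,t_j)$ is a squarefree monomial ideal, so this is its minimal primary decomposition and it has no embedded primes; by Lemma~\ref{anoth-one-char-spow-general},
$$
J(G)^{(n)}=\bigcap_{\{t_i,t_j\}\in E(G)}(t_i,t_j)^n\qquad\text{for all }n\geq 1,
$$
so a monomial $t^a$ lies in $J(G)^{(n)}$ exactly when $a_i+a_j\geq n$ for every edge $\{t_i,t_j\}$ of $G$. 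We may also assume $G$ has no isolated vertices, since deleting them affects neither $J(G)$ and its (symbolic) powers nor the bipartiteness of $G$.

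For ``$J(G)$ Simis $\Rightarrow$ $G$ bipartite'' I would specialize to $n=2$: then $J(G)^{(2)}=J(G)^2$, and since a graph is a $2$-uniform clutter, Proposition~\ref{dec11-23} (equivalently Theorem~\ref{dual-d-simis} with $d=2$) yields two disjoint minimal vertex covers $V_1,V_2$ with $V(G)=V_1\cup V_2$ such that every edge meets both $V_1$ and $V_2$; this is a bipartition of $G$.

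The substantive direction is ``$G$ bipartite $\Rightarrow$ $J(G)$ Simis''. Fix a bipartition $V(G)=X\cup Y$ and an integer $n\geq 1$; since $J(G)^n\subseteq J(G)^{(n)}$ always holds, it suffices to put an arbitrary monomial $t^a\in J(G)^{(n)}$ into $J(G)^n$. For $0\leq k\leq n-1$ set
$$
C_k:=\{\,v\in X\mid a_v\geq k+1\,\}\ \cup\ \{\,v\in Y\mid a_v\geq n-k\,\}.
$$
The one place bipartiteness is used: each $C_k$ is a vertex cover of $G$, because for an edge $\{x,y\}$ with $x\in X$ and $y\in Y$, if $x\notin C_k$ then $a_x\leq k$, and $a_x+a_y\geq n$ forces $a_y\geq n-k$, so $y\in C_k$. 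Counting, for each vertex $v$, how many of $C_0,\dots,C_{n-1}$ contain it shows that the coordinatewise sum of indicator vectors satisfies $\sum_{k=0}^{n-1}\chi_{C_k}\leq a$ (its $v$th coordinate equals $\min(a_v,n)$). Picking a minimal vertex cover $C_k'\subseteq C_k$ for each $k$, the monomial $\prod_{k=0}^{n-1}t_{C_k'}$ divides $\prod_{k=0}^{n-1}t_{C_k}$, which divides $t^a$; since $\prod_{k=0}^{n-1}t_{C_k'}\in J(G)^n$, we conclude $t^a\in J(G)^n$. Hence $J(G)^{(n)}=J(G)^n$ for all $n$.

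The crux is this last factorization — turning a monomial that merely obeys $a_i+a_j\geq n$ on all edges into an honest product of $n$ vertex covers. For non-bipartite $G$ it fails, and the obstruction is exactly the mechanism of Theorem~\ref{dual-d-simis}: an odd cycle prevents two disjoint minimal vertex covers from exhausting its vertex set, so $t_1\cdots t_s\in J(G)^{(2)}\setminus J(G)^2$. For bipartite $G$ the threshold sets $C_k$ are forced to be covers, which is essentially König's theorem; the combinatorial argument above reproves \cite[Corollary~4.6]{alexdual}, which is the ingredient used in Proposition~\ref{dec11-23}.
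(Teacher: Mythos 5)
Your argument is correct, but it is worth noting that the paper does not actually prove this statement: Theorem~\ref{ntf-dual} is quoted from \cite{reesclu} (and \cite{alexdual}), where the ``if'' direction is obtained from the machinery of blowup/symbolic Rees algebras of squarefree monomial ideals and, ultimately, the max-flow min-cut property of bipartite graphs. What you supply instead is a short self-contained proof. Your ``only if'' direction is exactly the route the paper itself would take, namely specializing to $n=2$ and invoking Proposition~\ref{dec11-23} (whose relevant implication rests on Theorem~\ref{dual-d-simis}, proved in the paper, so there is no circularity). Your ``if'' direction is the genuinely new contribution: the threshold sets $C_k=\{v\in X\mid a_v\geq k+1\}\cup\{v\in Y\mid a_v\geq n-k\}$ are vertex covers precisely because of the bipartition, each vertex $v$ lies in exactly $\min(a_v,n)$ of them, so $\prod_{k=0}^{n-1}t_{C_k}$ divides $t^a$ and exhibits $t^a\in J(G)^n$; I checked the multiplicity count for $v\in X$ and $v\in Y$ and both give $\min(a_v,n)$, and each $C_k$ is nonempty and contains a minimal cover, so the factorization lands in $J(G)^n$. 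This level-set decomposition is an elementary stand-in for K\"onig's theorem and reproves \cite[Corollary~4.6]{alexdual} directly, which is arguably more transparent than the Rees-algebra route; what the cited approach buys in exchange is the stronger structural information about the symbolic Rees algebra (normality, generators) that a bare containment argument does not give. The only cosmetic caveat is the degenerate case $E(G)=\emptyset$, which you dispatch correctly by discarding isolated vertices.
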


\begin{theorem}\cite[Theorem~3.2]{weighted-symbolic}\label{I2=I(2)-char}  
Let $D$ be a weighted oriented graph and let $G$ be its underlying 
graph. Then, $I(D)^2=I(D)^{(2)}$ if and only if the following
two conditions hold:
\begin{enumerate}
\item[{\rm (i)}] Every vertex in $V^+(D)$ is a sink;
\item[{\rm (ii)}] $G$ has no triangles.
\end{enumerate}
\end{theorem}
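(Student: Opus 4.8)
The plan is to reduce the weighted oriented problem to the classical unweighted one via the weighting machinery already in place, treating conditions (i) and (ii) almost separately. The pivot is the remark recorded in the introduction: as soon as condition (i) holds, the tail $t_i$ of any arc $(t_i,t_j)$ carries an outgoing edge, hence lies outside $V^+(D)$ and has weight $1$, so the generator $t_it_j^{w_j}$ equals $t_i^{w_i}t_j^{w_j}$ and therefore $I(D)=I(G)_w$ for the standard weighting $w$ with $w(t_i)=w_i$. Since $I(G)$ is squarefree it has no embedded primes, so Corollary~\ref{simis-w}(a) with $n=2$ gives, under (i), the equivalence
\[
I(D)^{(2)}=I(D)^2\iff I(G)^{(2)}=I(G)^2 .
\]
Thus the whole question collapses, under (i), to the unweighted statement $(\star)$: for a simple graph, $I(G)^{(2)}=I(G)^2$ if and only if $G$ is triangle-free. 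I would isolate $(\star)$ as a lemma.

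To prove $(\star)$, use Lemma~\ref{anoth-one-char-spow-general} to write $I(G)^{(2)}=\bigcap_C\mathfrak{p}_C^2$ over minimal vertex covers $C$, with $\mathfrak{p}_C=(t_v\mid v\in C)$. If $\{t_a,t_b,t_c\}$ is a triangle then every cover contains at least two of these vertices, so $t_at_bt_c\in\mathfrak{p}_C^2$ for all $C$, i.e. $t_at_bt_c\in I(G)^{(2)}$, while $\deg(t_at_bt_c)=3<4$ forces $t_at_bt_c\notin I(G)^2$. For the converse, take $M=t^a$ of least degree in $I(G)^{(2)}\setminus I(G)^2$. For every edge $\{u,v\}$ dividing $M$, the quotient $M/(t_ut_v)$ cannot lie in $I(G)$ (else $M\in I(G)^2$), so its support is independent; this forces $a_u=a_v=1$ and shows that the graph induced on $W=\mathrm{supp}(M)$ has no two disjoint edges. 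A graph with matching number $1$ is a star or a triangle, and triangle-freeness leaves a star with center $t_z$. Extending the set of leaves together with the $G[W]$-isolated vertices to a maximal independent set $T$, one checks $W\setminus T=\{t_z\}$, so the cover $V\setminus T$ witnesses $\sum_{v\in V\setminus T}a_v=a_z=1<2$, contradicting $M\in I(G)^{(2)}$.

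Granting $(\star)$, sufficiency is immediate: assuming (i) and (ii), the reduction of the first paragraph and $(\star)$ give $I(D)^{(2)}=I(D)^2$. For necessity, assume $I(D)^{(2)}=I(D)^2$ and first establish (i) by contradiction. If some $t_j\in V^+(D)$ (so $w_j\ge 2$) is not a sink, then $t_j$ is not a source, hence has an incoming arc $(t_i,t_j)$ and an outgoing arc $(t_j,t_k)$, with $t_i\ne t_k$ because $D$ has no $2$-cycles. I claim $M=t_it_j^{w_j}t_k^{w_k}\in I(D)^{(2)}\setminus I(D)^2$. That $M\notin I(D)^2$ is a support/degree count: any generator dividing $M$ is supported on $\{t_i,t_j,t_k\}$, the variable $t_i$ occurs to the first power, and the $t_j$-exponent $w_j$ cannot be assembled from a product of two such generators without overshooting the $t_i$- or $t_k$-exponent.

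The membership $M\in I(D)^{(2)}=\bigcap_C\mathfrak{q}_C^2$ is the heart of the argument, and I expect it to be the main obstacle. Here $\mathfrak{q}_C=I(D)S_{\mathfrak{p}_C}\cap S$ is the $\mathfrak{p}_C$-primary component (cf.~Proposition~\ref{dec17-23-coro}), generated by $t_p$ when $t_p\in C,\ t_q\notin C$, by $t_q^{w_q}$ when $t_p\notin C,\ t_q\in C$, and by $t_pt_q^{w_q}$ when both endpoints of $(t_p,t_q)$ lie in $C$. Fixing a minimal cover $C$ and running a case analysis on whether $t_j\in C$ (and, when $t_j\in C$, on $t_k$ and $t_i$, invoking the minimality of $C$ to locate an edge privately covered by $t_k$), one produces in every case a product of two generators of $\mathfrak{q}_C$ dividing $M$; the case $t_j\in C,\ t_k\notin C$ is precisely where $w_j\ge 2$ enters, through $t_j^2\mid M$. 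This contradicts $I(D)^{(2)}=I(D)^2$ and yields (i). Finally, with (i) in hand the reduction of the first paragraph applies and $(\star)$ forces $G$ to be triangle-free, which is (ii), completing the proof.
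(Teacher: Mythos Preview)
The paper does not supply a proof of this theorem; it is merely quoted from \cite[Theorem~3.2]{weighted-symbolic}, so there is no in-paper argument to compare against. I can therefore only assess your proposal on its own merits.

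Your overall strategy is sound and fits neatly within the paper's framework. The reduction step---observing that under~(i) one has $I(D)=I(G)_w$ and then invoking Corollary~\ref{simis-w}(a)---is clean and correct. The necessity argument for~(i) via the monomial $M=t_it_j^{w_j}t_k^{w_k}$ works: I checked the case analysis on a minimal cover~$C$ (splitting on $t_j\in C$ or not, and using minimality of~$C$ at $t_k$ in the subcases where $t_j,t_k\in C$) and in every branch one finds two generators of $\mathfrak q_C$ whose product divides $M$; the non-membership $M\notin I(D)^2$ follows by your exponent count. Your proof of $(\star)$ is also correct in substance, but one line is overclaimed: the assertion ``this forces $a_u=a_v=1$'' does not follow just from the independence of $\mathrm{supp}\bigl(M/(t_ut_v)\bigr)$---a leaf of the star may well carry exponent $\geq 2$. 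What your argument actually needs, and what \emph{is} true, is that the center $t_z$ satisfies $a_z=1$: when the star has at least two leaves this follows because a second leaf~$y'$ remains in the support of $M/(t_zt_y)$ and would be adjacent to~$z$; when the star is a single edge, at least one endpoint has exponent~$1$ and you may take that one as the center. With this adjustment the final cover $V\setminus T$ indeed gives $\sum_{v\in V\setminus T}a_v=a_z=1<2$, yielding the desired contradiction.

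Since the paper's own proof lives in the cited reference, I cannot say whether your route matches it; but your use of the weighting transfer (Corollary~\ref{simis-w}) to isolate the unweighted lemma $(\star)$ is arguably more in the spirit of the present paper than a direct attack would be.
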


The following lemma is the main auxiliary result of this section.

\begin{lemma}\label{jan4-24}
Let $D$ be a weighted oriented graph, let $I(D)$ be its edge ideal, 
and let $J(D)$ be the dual of
$I(D)$. If there exists
a vertex $v\in V^+(D)$ that is neither a source nor a sink, then
$J(D)^{(2)}\not\subset J(D)^2$. 
\end{lemma}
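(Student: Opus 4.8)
The strategy is to exhibit an explicit monomial that lies in $J(D)^{(2)}$ but not in $J(D)^2$, built from the offending vertex $v$ together with two of its neighbors witnessing that $v$ is neither a source nor a sink. By hypothesis $v=t_k\in V^+(D)$, so $w_k\ge 2$, and there exist vertices $t_i,t_j$ with $(t_i,t_k)\in E(D)$ (an incoming edge, so $v$ is not a source) and $(t_k,t_j)\in E(D)$ (an outgoing edge, so $v$ is not a sink). Recall that $J(D)=\bigcap_{(t_a,t_b)\in E(D)}(t_a,t_b^{w_b})$ is the irreducible decomposition of $J(D)$, and since this is the canonical minimal primary decomposition of a monomial ideal, the primary components $\mathfrak{q}$ attached to the minimal primes of $J(D)$ are intersections of these $(t_a,t_b^{w_b})$ over edges whose underlying prime $(t_a,t_b)$ is minimal. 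By Lemma~\ref{anoth-one-char-spow-general}, $J(D)^{(2)}=\mathfrak{q}_1^2\cap\cdots\cap\mathfrak{q}_p^2$, so a monomial lies in $J(D)^{(2)}$ iff it lies in $(t_a,t_b^{w_b})^2$ for every edge $(t_a,t_b)$ whose prime $(t_a,t_b)$ is a minimal prime of $J(D)$.

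First I would pick the candidate monomial $g = t_i\, t_k^{w_k}\, t_j^{w_j}$ (or a slight variant; one may need to multiply by $t_k$ to an appropriate power, so the precise exponent on $t_k$ should be fixed after checking the membership computation). The point is that $t_i t_k^{w_k}\in J(D)$ via the edge $(t_i,t_k)$ and $t_k\, t_j^{w_j}\in J(D)$ via the edge $(t_k,t_j)$ — wait, $t_k\,t_j^{w_j}$ need not be in $J(D)$ unless $w_k=1$; what \emph{is} in $J(D)$ for the edge $(t_k,t_j)$ is $t_k$ itself together with $t_j^{w_j}$, i.e.\ the generator of $I(D)$ is $t_k t_j^{w_j}$ but the generators of $J(D)$ for that edge are among lcm's containing $t_k$ or $t_j^{w_j}$. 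So the cleaner choice is: show $g=t_i\,t_k^{w_k}\,t_j^{w_j}\in J(D)^{(2)}$ by verifying, for each minimal prime $(t_a,t_b)$ of $J(D)$, that $g\in (t_a,t_b^{w_b})^2$. This splits into cases according to whether $\{t_a,t_b\}$ meets $\{t_i,t_k,t_j\}$; using that $g$ is divisible by $t_i$, by $t_k^{w_k}$, and by $t_j^{w_j}$, one checks in each case that $g$ has $(t_a,t_b^{w_b})$-order at least $2$ — for instance if the edge is $(t_i,t_k)$ then $g$ is divisible by $t_i\cdot t_k^{w_k}\in(t_i,t_k^{w_k})^2$, and if the edge avoids $\{t_i,t_j\}$ but is, say, $(t_k,t_b)$ with $b\notin\{i,j\}$, then $g$ is divisible by $t_k^{w_k}=t_k\cdot t_k^{w_k-1}$, and since $w_k\ge 2$ this lies in $(t_k,t_b^{w_b})^2$. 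The remaining edges are disjoint from $\{t_i,t_k,t_j\}$ and then one must argue they do not obstruct — here one uses the structure of minimal primes of $J(D)$, or simply enlarges $g$ by an extra factor supported on all of $V(D)$ if needed, since symbolic powers are only helped by such factors in the relevant components. This bookkeeping is the technical heart.

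Second, I would show $g\notin J(D)^2$. A monomial in $J(D)^2$ is a multiple of a product $h_1 h_2$ with $h_1,h_2\in\mathcal G(J(D))$, and every generator of $J(D)$ is (by Lemma~\ref{intofmonoismono} applied to the irreducible decomposition) an lcm of variables and pure powers $t_b^{w_b}$ picked from vertex covers; crucially each such generator, restricted to any single variable $t_k$, carries exponent $0$, $1$, or $w_k$. For $g=t_i t_k^{w_k} t_j^{w_j}$ to be a multiple of $h_1h_2$, the $t_k$-exponents of $h_1$ and $h_2$ must sum to at most $w_k$; since $v=t_k$ is a sink in neither orientation-sense, both the edge $(t_i,t_k)$ and the edge $(t_k,t_j)$ force, in any generator that ``uses'' the vertex $v$ to cover one of these two edges, a specific exponent on $t_k$ (namely $w_k$ when covering via $t_k^{w_k}$, or $1$ when covering via $t_k$). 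The combinatorial claim is that to cover \emph{both} edges $(t_i,t_k)$ and $(t_k,t_j)$ by a product of two minimal vertex covers while keeping total $t_k$-exponent $\le w_k$ and total $t_i$-exponent $\le 1$ and total $t_j$-exponent $\le w_j$ is impossible — one of the two covers must use $t_k^{w_k}$ (to handle the incoming edge $(t_i,t_k)$ without spending $t_i$), and then the other cover, having $t_k$-exponent $0$, must cover $(t_k,t_j)$ through $t_j^{w_j}$, and similarly must cover $(t_i,t_k)$ through $t_i$; but then, examining a third edge at $v$ or re-examining the two covers' behavior on $t_i$ versus on $t_j$, one reaches a divisibility contradiction (typically the $t_i$-exponent overshoots, or the $t_k$-exponent must be $2$). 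I expect \textbf{this last impossibility argument} to be the main obstacle: it requires carefully enumerating how a product of two generators of $J(D)$ can be divisible by $t_i t_k^{w_k} t_j^{w_j}$ and ruling every case out, and the correct choice of $g$ (possibly with extra innocuous variables, or with $t_k$ raised to $w_k$ versus $w_k-1$) must be tuned precisely so that the symbolic-power membership in Step~1 succeeds \emph{and} the ordinary-power non-membership in Step~2 succeeds simultaneously. Once the monomial is correctly calibrated, both halves reduce to finite case checks on exponents.
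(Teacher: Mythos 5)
Your overall strategy coincides with the paper's: use $J(D)^{(2)}=\bigcap_{(t_a,t_b)\in E(D)}(t_a,t_b^{w_b})^2$ (valid since every $(t_a,t_b)$ is a minimal prime of $J(D)$, as $D$ is oriented) and exhibit a monomial, built from the incoming edge $(t_i,t_k)$ and the outgoing edge $(t_k,t_j)$ at $v=t_k$, lying in the symbolic square but not the ordinary square. However, the two steps you yourself flag as ``the technical heart'' and ``the main obstacle'' are genuinely missing, and in one place your candidate would actually fail. Concretely: even after multiplying $g=t_it_k^{w_k}t_j^{w_j}$ by $\prod_{l\notin\{i,j,k\}}t_l^{2w_l}$ (which is exactly the paper's device for the edges away from $v$), the resulting monomial need not lie in $J(D)^{(2)}$. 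If $t_i$ and $t_j$ span a triangle with $t_k$ via the edge $(t_j,t_i)$, with $w_i\geq 2$ and $w_j=1$, then $(t_j,t_i^{w_i})^2=(t_j^2,\ t_jt_i^{w_i},\ t_i^{2w_i})$ contains no divisor of $g$, since $g$ carries $t_i$ to the first power and $t_j$ to the first power. This is why the paper is forced into a case analysis on whether $\{t_i,t_k,t_j\}$ spans a triangle, on the orientation of the third edge, and on the weights $w_i,w_j$, ending up with four distinct candidate monomials; a single ``tuned'' monomial does not exist uniformly across these cases.

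The second gap is in the non-membership step. Your analysis of how $h_1h_2$ (a product of two generators of $J(D)$) could divide the candidate is confined to the three variables $t_i,t_k,t_j$, but once the factor $\prod_l t_l^{2w_l}$ is present -- and it must be, or symbolic membership fails for edges away from $v$ -- the generators $h_1,h_2$ can discharge their cover obligations on the extra variables. Indeed, your dichotomy leads to $h_1$ restricting to $t_it_j^{w_j}$ and $h_2$ restricting to $t_k^{w_k}$ on these three variables, which is not by itself a contradiction: one must still rule out every way of completing $h_1,h_2$ on $t_4,\ldots,t_s$ within the available exponent budget, and your sketch does not address this. The device that closes this step in the paper is to bound $J(D)$ from above by the intersection of only the two or three irreducible components supported on $\{t_i,t_k,t_j\}$, compute that intersection explicitly as a monomial ideal via Lemma~\ref{intofmonoismono} (e.g.\ $(t_1,t_2^{w_2})\cap(t_2,t_3^{w_3})=(t_1t_2,\,t_1t_3^{w_3},\,t_2^{w_2})$ in the triangle-free case), square it, and verify that none of its finitely many generators divides the candidate. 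Without this (or an equivalent reduction), Step 2 of your plan is not a proof.
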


\begin{proof} There are $u,x$ in $V(D)$ such that $(u,v)$, $(v,x)$
are in $E(D)$. Let $V(D)=\{t_1,\ldots,t_s\}$ be the vertex 
set of $D$ and let $w_i$ be the weight of $t_i$. By
Lemma~\ref{anoth-one-char-spow-general}, one has
\begin{equation}\label{dec28-23}
J(D)^{(2)}=\bigcap_{(t_i,t_j)\in E(D)}
(t_i,t_j^{w_j})^2.
\end{equation}
\quad We may assume
$u=t_1$, $v=t_2$, $x=t_3$, and $w_2\geq 2$. Then, the monomial ideals 
$J_1:=(t_1,t_2^{w_2})$ and
$J_2:=(t_2,t_3^{w_3})$ are irreducible components of $J(D)$. 
Take $(t_i,t_j)\in E(D)$, $i\neq j$, and consider the ideal
$I_{i,j}=(t_i,t_j^{w_j})^2=(t_i^2,t_it_j^{w_j},t_j^{2w_j})$. There
are two 
cases to consider.

\begin{enumerate}
\item[(A)] Assume that $t_1,t_2,t_3$ do not form a
triangle of the underlying graph $G$ of $D$, that is, $(t_1,t_3)$ and
$(t_3,t_1)$ are not edges of $D$. Setting
$f=t_1t_2^{w_2}t_4^{2w_4}\cdots t_s^{2w_s}$, it suffices to show 
that $f\in J(D)^{(2)}\setminus J(D)^2$. Using Eq.~\eqref{dec28-23}, 
first we show that $f\in J(D)^{(2)}$, that is, we show that $f\in
I_{i,j}^2$ for all $(t_i,t_j)\in E(D)$.  
We consider three 
subcases. 
\begin{enumerate}
\item[(A.1)] Assume that $i=1$. If $j=2$, then $t_1t_2^{w_2}\in
I_{1,2}^2$, $t_1t_2^{w_2}$ divides $f$, and
$f\in I_{1,2}^2$. The case $j=3$ cannot occur because $t_1,t_2,t_3$ do not form a
triangle of $G$. If $j\geq 4$, then $t_j^{2w_j}\in I_{1,j}^2$,
$t_j^{2w_j}$ divides $f$, and
$f\in I_{1,j}^2$.

\item[(A.2)] Assume that $i\geq 2$ and $j=1$. Note that
$(t_2,t_1)\notin E(D)$ and $(t_3,t_1)\notin E(D)$ because $D$ is an
oriented graph and $t_1,t_2,t_3$ do not form a triangle of $G$. Then,
$i\geq 4$, $t_i^2\in I_{i,1}^2$, $t_i^2$ divides $t_i^{2w_i}$, and $t_i^{2w_i}$ divides $f$.
Thus, $t_i^2$ divides $f$ and $f\in I_{i,1}^2$.

\item[(A.3)] Assume that $i\geq 2$ and $j\geq 2$. If $i=2$, then
$j\geq 3$, $t_2^2\in I_{2,j}^2$, $t_2^2$ divides $f$ because $w_2\geq 2$, and $f\in
I_{2,j}^2$. If $i=3$, then $j\geq 4$ because $(t_3,t_2)\notin E(D)$, 
$t_j^{2w_j}\in I_{3,j}^2$, and  $t_j^{2w_j}$ divides $f$. Thus,
$f\in I_{3,j}^2$.  If $i\geq 4$, then $t_i^2\in I_{i,j}^2$, $t_i^2$
divides $t_i^{2w_i}$,  
and $t_i^{2w_i}$ divides $f$. Thus, one has $f\in I_{i,j}^2$. 
\end{enumerate}
\quad Hence $f\in J(D)^{(2)}$. Now we show that $f\notin J(D)^2$. By 
Lemma~\ref{intofmonoismono}, one has 
\begin{align*}
J(D)\subset&(t_1,t_2^{w_2})\cap(t_2,t_3^{w_3})=(t_1t_2,t_1t_3^{w_3},t_2^{w_2}),\\
J(D)^2\subset&(t_1t_2,t_1t_3^{w_3},t_2^{w_2})^2=
(t_1^2t_2^2,t_1^2t_2t_3^{w_3},t_1t_2^{w_2+1},t_1^2t_3^{2w_3},
t_1t_2^{w_2}t_3^{w_3},t_2^{2w_2}).
\end{align*}
Therefore, $f\notin(t_1t_2,t_1t_3^{w_3},t_2^{w_2})^2$, and
consequently $f\notin J(D)^2$. 

\item[(B)] Assume that $t_1,t_2,t_3$ form a triangle of the underlying
graph $G$ of $D$, that is, either $(t_1,t_3)\in E(D)$ or $(t_3,t_1)\in
E(D)$. There are two cases to consider.

\begin{enumerate}
\item[(B.1)] Assume that $(t_1,t_3)\in E(D)$. Setting 
$g=t_1t_2^{w_2}t_3^{w_3}t_4^{2w_4}\cdots t_s^{2w_s}$, it suffices to show 
that $g\in J(D)^{(2)}\setminus J(D)^2$. Using Eq.~\eqref{dec28-23}, 
first we show that $g\in J(D)^{(2)}$, that is, we show that $g\in
I_{i,j}^2$ for all $(t_i,t_j)\in E(D)$.
\begin{enumerate}
\item[(B.1.1)] Assume that $i=1$. Then, $j\geq 2$, $t_1t_j^{w_j}$
divides $g$ for $j=2,3$, and $t_j^{2w_j}$ divides $g$ for $j\geq 4$. 
Thus, $g\in  I_{1,j}^2$ for $j\geq 2$. 
\item[(B.1.2)] Assume that $j=1$. Then, $i\geq 4$ because
$(t_1,t_2)\in E(D)$, $(t_1,t_3)\in E(D)$ and $D$ is oriented. Hence,
$t_i^2$ divides $g$ and $g\in I_{i,1}^2$.
\item[(B.1.3)] Assume that $i\geq 2$, $j\geq 2$. Then, for $j\geq 4$, 
$t_j^{2w_j}$ divides $g$, and for $i\geq 4$, $t_i^2$ divides $g$. If
$2\leq i\leq 3$ and $2\leq j\leq 3$, then $i=2$ and $j=3$ because 
$(t_3,t_2)\notin E(D)$. Since $w_2\geq 2$, $t_2^2$ divides $g$. Thus,
in each case $g\in I_{i,j}^2$.
\end{enumerate}
\quad Hence $g\in J(D)^{(2)}$. Now we show that $g\notin J(D)^2$. By 
Lemma~\ref{intofmonoismono}, one has 
\begin{align*}
J(D)\subset&(t_1,t_2^{w_2})\cap(t_2,t_3^{w_3})\cap(t_1,t_3^{w_3})=
(t_1t_2,t_1t_3^{w_3},t_2^{w_2}t_3^{w_3}),\\
J(D)^2\subset&(t_1^2t_2^2,t_1^2t_2t_3^{w_3},t_1t_2^{w_2+1}t_3^{w_3},t_1^2t_3^{2w_3},
t_1t_2^{w_2}t_3^{2w_3},t_2^{2w_2}t_3^{2w_3}).
\end{align*}
Therefore, $g\notin(t_1t_2,t_1t_3^{w_3},t_2^{w_2}t_3^{w_3})^2$, and
consequently $g\notin J(D)^2$. 
\item[(B.2)] Assume that $(t_3,t_1)\in E(D)$. There are three cases to
consider.
\begin{enumerate}
\item[(B.2.1)] Assume that $w_1=1$, $w_3=1$. Note that this 
case follows from case (B.1) since we can reverse the direction of
arrow $(t_3,t_1)$ without changing $J(D)$. 
\item[(B.2.2)] Assume that $w_1=1$, $w_3\geq 2$. Setting 
$g=t_1t_2^{w_2}t_3^{w_3}t_4^{2w_4}\cdots t_s^{2w_s}$, it suffices to show 
that $g\in J(D)^{(2)}\setminus J(D)^2$. Using Eq.~\eqref{dec28-23}, 
first we show that $g\in J(D)^{(2)}$, that is, we show that $g\in
I_{i,j}^2$ for all $(t_i,t_j)\in E(D)$.  
We consider three 
subcases. 
\begin{enumerate}
\item[(B.2.2.1)] Assume that $i=1$. Then, $j\geq 2$. 
If $j=2,3$, then $t_1t_j^{w_j}$ divides $g$ and
$g\in I_{1,j}^2$. If $j\geq 4$, then $t_j^{2w_j}$ divides $g$ and
$g\in I_{1,j}^2$.

\item[(B.2.2.2)] Assume that $j=1$. Then, $i\geq 3$ because $D$ is
oriented. If $i=3$, then $t_3^2$ divides $g$ because $w_3\geq 2$ and
$g\in I_{3,1}^2$. If 
$i\geq 4$, then $t_i^2$ divides $t_i^{2w_i}$, $t_i^{2w_i}$ divides
$g$, and $g\in I_{i,1}^2$.

\item[(B.2.2.3)] Assume that $i\geq 2$ and $j\geq 2$. If $i\geq 4$,
then $t_i^2$ divides $g$ and $g\in I_{i,j}^2$. If $j\geq 4$, then
$t_j^{2w_j}$ divides $g$ and $g\in I_{i,j}^2$. If $2\leq i\leq 3$ and
$2\leq j\leq 3$, then $i=2$, $j=3$ because $D$ is oriented. Then
$t_2^2$ divides $g$ because $w_2\geq 2$ and $g\in I_{2,3}^2$. 
\end{enumerate}
\quad Hence $g\in J(D)^{(2)}$. Now we show that $g\notin J(D)^2$. By 
Lemma~\ref{intofmonoismono}, one has 
\begin{align*}
J(D)\subset&(t_1,t_2^{w_2})\cap(t_2,t_3^{w_3})\cap(t_3,t_1^{w_1})=
(t_1t_2t_3,t_1^{w_1}t_2,t_1t_3^{w_3},t_2^{w_2}t_3),\\
J(D)^2\subset&(t_1^2t_2^2t_3^2,t_1^{w_1+1}t_2^2t_3,t_1^2t_2t_3^{w_3+1},
t_1t_2^{w_2+1}t_3^{2},
t_1^{2w_1}t_2^{2},t_1^{w_1+1}t_2t_3^{w_3},\\
\quad &t_1^{w_1}t_2^{w_2+1}t_3,t_1^2t_3^{2w_3},
t_1t_2^{w_2}t_3^{w_3+1},t_2^{2w_2}t_3^2).
\end{align*}
Therefore, $g\notin(t_1t_2t_3,t_1^{w_1}t_2,t_1t_3^{w_3},t_2^{w_2}t_3)^2$, and
consequently $g\notin J(D)^2$. 
\item[(B.2.3)] Assume that $w_1\geq 2$, $w_3\geq 1$. Setting 
$h=t_1^{w_1}t_2t_3^{w_3}t_4^{2w_4}\cdots t_s^{2w_s}$, it suffices to show 
that $h\in J(D)^{(2)}\setminus J(D)^2$. Using Eq.~\eqref{dec28-23}, 
first we show that $h\in J(D)^{(2)}$, that is, we show that $h\in
I_{i,j}^2$ for all $(t_i,t_j)\in E(D)$. There are three subcases to consider. 
\begin{enumerate}
\item[(B.2.3.1)] Assume that $i=1$. Then, $j\geq 2$, $t_1^2$ divides
$h$ because $w_1\geq 2$, and $h\in I_{1,j}^2$. 
\item[(B.2.3.2)] Assume that $j=1$. As $D$ is an oriented graph and
$(t_1,t_2)\in E(D)$, then $(t_2,t_1)\notin E(D)$. Thus, $i\geq 3$. If
$i=3$, then $t_1^{w_1}t_3$ divides $h$ and $h\in I_{3,1}^2$. If $i\geq
4$, then $t_i^2$ divides $h$ and $h\in I_{i,1}^2$.
\item[(B.2.3.3)] Assume that $i\geq 2$ and $j\geq 2$. If $i\geq 4$,
then $t_i^2$ divides $h$ and $h\in I_{i,j}^2$. If $j\geq 4$, then
$t_j^{2w_j}$ divides $h$ and $h\in I_{i,j}^2$. If $2\leq i\leq 3$ and
$2\leq j\leq 3$, then $i=2$, $j=3$ because $D$ is oriented. Then, 
$t_2t_3^{w_3}$ divides $h$ and $h\in I_{2,3}^2$.
\end{enumerate}
\quad Hence $h\in J(D)^{(2)}$. Now we show that $h\notin J(D)^2$. By 
Lemma~\ref{intofmonoismono}, one has 
\begin{align*}
J(D)\subset&(t_1,t_2^{w_2})\cap(t_2,t_3^{w_3})\cap(t_3,t_1^{w_1})=
(t_1t_2t_3,\,t_1^{w_1}t_2,\,t_1t_3^{w_3},\,t_2^{w_2}t_3),\\
J(D)^2\subset&(t_1^2t_2^2t_3^2,\,t_1^{w_1+1}t_2^2t_3,\,t_1^2t_2t_3^{w_3+1},\,
t_1t_2^{w_2+1}t_3^{2},\,
t_1^{2w_1}t_2^{2},\,t_1^{w_1+1}t_2t_3^{w_3},\\
\quad&t_1^{w_1}t_2^{w_2+1}t_3,\,t_1^2t_3^{2w_3},\,
t_1t_2^{w_2}t_3^{w_3+1},\,t_2^{2w_2}t_3^2).
\end{align*}
Thus, $h\notin(t_1t_2t_3,t_1^{w_1}t_2,t_1t_3^{w_3},t_2^{w_2}t_3)^2$, and
consequently $h\notin J(D)^2$. 
\end{enumerate}
\end{enumerate}
\end{enumerate}
\quad Therefore, $J(D)^{(2)}\setminus J(D)^2\neq\emptyset$
and the proof is complete.
\end{proof}

We come to one of our main results.

\begin{theorem}\label{J2=J(2)}  
Let $D$ be a weighted oriented graph and let $J(D)$ be the dual of 
the edge ideal $I(D)$ of $D$. Then, $J(D)^2=J(D)^{(2)}$ if and only if the following
two conditions hold:
\begin{enumerate}
\item[{\rm (i)}] Every vertex in $V^+(D)$ is a sink.
\item[{\rm (ii)}] The underlying 
graph $G$ of $D$ is bipartite.
\end{enumerate}
\end{theorem}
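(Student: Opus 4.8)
The plan is to reduce the statement to the squarefree case by means of the standard weighting $w$ of $S$ defined by $w(t_i):=w_i$, after first using Lemma~\ref{jan4-24} to extract condition (i). Since an isolated vertex of $D$ is the same thing as an isolated vertex of its underlying graph $G$ and affects none of $I(D)$, $J(D)$, or the conditions (i), (ii), I may assume throughout that $D$ (hence $G$) has no isolated vertices. The \emph{key identity} I would establish is that, whenever every vertex of $V^+(D)$ is a sink, one has $J(D)=(I_c(G))_w$. To see this, write $I_c(G)=\bigcap_{\{t_i,t_j\}\in E(G)}(t_i,t_j)$; by Proposition~\ref{i-iw-lem} this gives $(I_c(G))_w=\bigcap_{\{t_i,t_j\}\in E(G)}(t_i^{w_i},t_j^{w_j})$, whereas $J(D)=\bigcap_{(t_i,t_j)\in E(D)}(t_i,t_j^{w_j})$. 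For an edge oriented as $(t_i,t_j)\in E(D)$, the tail $t_i$ has an outgoing edge, so it is not a sink, so by hypothesis $t_i\notin V^+(D)$ and $w_i=1$; thus $(t_i^{w_i},t_j^{w_j})=(t_i,t_j^{w_j})$, and the two intersections agree component by component.

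For the forward implication, I would assume $J(D)^2=J(D)^{(2)}$. If some $v\in V^+(D)$ were not a sink, then $v$ is not a source either (sources have weight $1$ by Remark~\ref{nov30-21}, while $w_v>1$), so $v$ is neither a source nor a sink; Lemma~\ref{jan4-24} then yields $J(D)^{(2)}\not\subset J(D)^2$, contradicting the hypothesis (recall $J(D)^2\subset J(D)^{(2)}$ always). Hence (i) holds, and the key identity gives $J(D)=(I_c(G))_w$. Since $I_c(G)$ is squarefree it has no embedded primes, so Corollary~\ref{simis-w}(a) applies and shows that $I_c(G)$ is Simis in degree $2$, i.e.\ $I_c(G)^{(2)}=I_c(G)^2$. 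By Proposition~\ref{dec11-23} this forces $G$ to be bipartite, which is (ii).

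For the converse, I would assume (i) and (ii). By (i) and the key identity, $J(D)=(I_c(G))_w$. By (ii) and Proposition~\ref{dec11-23}, $I_c(G)^{(2)}=I_c(G)^2$, i.e.\ $I_c(G)$ is Simis in degree $2$; since $I_c(G)$ has no embedded primes, Corollary~\ref{simis-w}(a) transfers this to $(I_c(G))_w=J(D)$, giving $J(D)^{(2)}=J(D)^2$. The main obstacle is the key identity $J(D)=(I_c(G))_w$ under hypothesis (i): it hinges on the observation that once every positively weighted vertex is a sink no such vertex can be the tail of an oriented edge, so that the irreducible components of the dual $J(D)$ (weight on the head only) match those of $(I_c(G))_w$ (standard weighting raising every variable to its weight); the bookkeeping of isolated vertices also belongs here. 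Everything past that identity is a routine invocation of Lemma~\ref{jan4-24}, Corollary~\ref{simis-w}(a) and Proposition~\ref{dec11-23}.
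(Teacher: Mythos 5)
Your proposal is correct and follows essentially the same route as the paper: extract condition (i) via Lemma~\ref{jan4-24}, establish the identity $J(D)=(I_c(G))_w=J(G)_w$ under (i) (the paper does this by the same tail-has-weight-one observation, organized as a four-case check), and then transfer the degree-$2$ Simis property between $J(G)$ and $J(D)$ via Corollary~\ref{simis-w}(a) combined with Proposition~\ref{dec11-23}. Your explicit handling of isolated vertices (needed to invoke Proposition~\ref{dec11-23} as stated) is a small point of extra care that the paper leaves implicit.
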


\begin{proof} $\Rightarrow$) To prove (i), we argue by contradiction assuming there
is $v$ in $V^+(D)$ which is not a sink. Note that $v$ is not a
source because all sources of $D$ have weight $1$ (Remark~\ref{nov30-21}). Hence, by
Lemma~\ref{jan4-24}, $J(D)^2\neq J(D)^{(2)}$, a contradiction. 

To prove (ii), let $J(G)$ be the ideal of covers of the underlying graph
$G$ of $D$, that is,
$$
J(G)=\bigcap_{\{t_i,t_j\}\in E(G)}
(t_i,t_j).
$$
\quad Consider the standard linear weighting given by
$$
w\colon\mathbb{R}^s\rightarrow \mathbb{R}^s,\quad
a\mapsto(a_1w_1,\ldots,a_sw_s),\ a=(a_1,\ldots,a_s),
$$
\quad Recall that $w(t_i):=w_i$ for $i=1,\ldots,s$. 
We claim that $J(G)_w=J(D)$. Take $\{t_i,t_j\}\in E(G)$ and consider
the following cases:
\begin{enumerate}
\item $w_i=w_j=1$. Then, $t_i^{w_i}t_j^{w_j}=t_it_j$, and 
either $(t_i,t_j)\in E(D)$ or $(t_j,t_i)\in E(D)$. 
\item $w_i\geq 2$, $w_j=1$. Then, $t_i^{w_i}t_j^{w_j}=t_i^{w_i}t_j$, and 
$(t_j,t_i)\in E(D)$ by condition (i). 
\item $w_i=1$, $w_j\geq 2$. Then,
$t_i^{w_i}t_j^{w_j}=t_it_j^{w_j}$, and 
$(t_i,t_j)\in E(D)$ by condition (i). 
\item $w_i\geq 2$, $w_j\geq 2$. Then, $(t_i,t_j)\in E(D)$ and 
$(t_j,t_i)\in E(D)$ because $t_i$ and $t_j$ are sinks by condition
(i), contradicting that $D$ is oriented. Thus, this case cannot occur. 
\end{enumerate}
\quad Therefore, using (1)--(4) and Lemma~\ref{i-iw}, we get 
\begin{equation}\label{feb6-24}
J(G)_w=\bigg(\bigcap_{\{t_i,t_j\}\in E(G)}
(t_i,t_j)\bigg)_w=\bigcap_{\{t_i,t_j\}\in E(G)}
(t_i,t_j)_w=\bigcap_{\{t_i,t_j\}\in E(G)}
(t_i^{w_i},t_j^{w_j})=J(D),
\end{equation}
and $J(G)_w=J(D)$, as claimed. Then, $J(G)_w$ is Simis in degree $2$ and, by
Corollary~\ref{simis-w}(a), so is $J(G)$, that
is, $J(G)^2=J(G)^{(2)}$. Therefore, by
Proposition~\ref{dec11-23}, $G$ is a bipartite graph.  

$\Leftarrow$) As $G$ is bipartite, by Proposition~\ref{dec11-23}, one
has $J(G)^{2}=J(G)^{(2)}$. By condition (i) 
and Lemma~\ref{i-iw}, we get $J(G)_w=J(D)$ (see Eq.~\eqref{feb6-24}
above).
As $J(G)$ is Simis in degree $2$, by 
Corollary~\ref{simis-w}(a), so is $J(G)_w$. Thus, one has
$J(D)^2=J(D)^{(2)}$. 
\end{proof}

The following theorem characterizes the equality
of ordinary and symbolic powers of $I(D)$. 
Mandal and Pradhan showed that conditions (a) and (b) of
Theorem~\ref{In=I(n)} are sufficient conditions for the equality
of ordinary and symbolic powers of $I(D)$ \cite[Corollary 3.8]{Mandal-Pradhan}. 

\begin{theorem}\cite[Theorem~3.3]{weighted-symbolic}\label{In=I(n)}  
Let $D$ be a weighted oriented graph and let $G$ be its underlying 
graph. Then, $I(D)^n=I(D)^{(n)}$ for all $n\geq 1$ if and only if the following
two conditions hold:
\begin{enumerate}
\item[{\rm (a)}] Every vertex in $V^+(D)$ is a sink;
\item[{\rm (b)}] $G$ is a bipartite graph.
\end{enumerate}
\end{theorem}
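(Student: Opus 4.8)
The plan is to deduce the theorem from the classical squarefree case by means of the weighting machinery developed in Sections~\ref{weighted-section} and \ref{symbolic-monomial-section}. The key structural fact is the one recorded just before Remark~\ref{nov30-21}: when every vertex of $V^+(D)$ is a sink, $I(D)$ is obtained from the edge ideal $I(G)$ of the underlying graph by the substitution $t_i\mapsto t_i^{w_i}$, i.e.\ $I(D)=I(G)_w$ for the standard linear weighting $w$ with $w(t_i)=w_i$. Concretely, if $(t_i,t_j)\in E(D)$ then the tail $t_i$ has an outgoing edge, hence is not a sink, hence is not in $V^+(D)$, so $w_i=1$ and $t_it_j^{w_j}=t_i^{w_i}t_j^{w_j}$; thus the generators of $I(D)$ and of $I(G)_w$ coincide. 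So condition~(a) is precisely what allows $I(D)$ to be viewed as a weighting of a squarefree ideal.

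For the implication $(\Leftarrow)$, I would assume (a) and (b), use (a) to write $I(D)=I(G)_w$ with $w$ standard, invoke the classical equivalence (a)$\Leftrightarrow$(c) of Corollary~\ref{char-ntf-w} (that is, \cite[Theorem~5.9]{ITG}) to conclude from (b) that the squarefree ideal $I(G)$ is Simis, and then transfer this to $I(G)_w=I(D)$ by Corollary~\ref{simis-w}(b). This gives $I(D)^n=I(D)^{(n)}$ for all $n\geq 1$.

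For the implication $(\Rightarrow)$, I would start from $I(D)^n=I(D)^{(n)}$ for all $n$, specialize to $n=2$, and apply Theorem~\ref{I2=I(2)-char}: this immediately forces condition~(a) (every vertex of $V^+(D)$ is a sink). With (a) secured we again have $I(D)=I(G)_w$ for the standard weighting $w(t_i)=w_i$; since $I(D)$ is Simis, Corollary~\ref{simis-w}(b) says $I(G)$ is Simis, and then Corollary~\ref{char-ntf-w} gives that $G$ is bipartite, which is~(b).

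The main obstacle is the ordering of the argument in the forward direction: one cannot ``un-weight'' $I(D)$ — and therefore cannot appeal to the squarefree theory at all — until one knows that $V^+(D)$ consists of sinks, so Theorem~\ref{I2=I(2)-char} (equivalently the degree-$2$ obstruction isolated in Lemma~\ref{jan4-24}) must be used first to extract condition~(a). Once (a) is in hand the remainder is routine: it is essentially a one-line application of Corollary~\ref{simis-w}(b) together with the bipartite characterization of Simis edge ideals of graphs, and no explicit manipulation of the primary components $\mathfrak{q}_i^n$ is required. A minor point to verify along the way is that $w$, defined by $w(t_i)=w_i$, is a genuine standard linear weighting — it is, since all $w_i\in\mathbb{N}_+$ — and that the identity $I(D)=I(G)_w$ holds under hypothesis~(a), which is the content of the remark preceding Remark~\ref{nov30-21}.
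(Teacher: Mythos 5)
Your derivation is correct, but note that the paper does not actually prove this statement: it is imported verbatim as \cite[Theorem~3.3]{weighted-symbolic}, so there is no internal proof to compare against. What you have done is reassemble the theorem from the paper's own machinery, and the assembly is sound: under hypothesis (a) the tail of every edge $(t_i,t_j)\in E(D)$ is a non-sink, hence lies outside $V^+(D)$ and has weight $1$, so the generators $t_it_j^{w_j}$ of $I(D)$ coincide with the generators $t_i^{w_i}t_j^{w_j}$ of $I(G)_w$, giving $I(D)=I(G)_w$ for the standard weighting $w(t_i)=w_i$; then Corollary~\ref{simis-w}(b) transfers the Simis property between $I(G)$ and $I(D)$, and \cite[Theorem~5.9]{ITG} (via Corollary~\ref{char-ntf-w}) converts it into bipartiteness. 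Your ordering in the forward direction is also the right one: condition (a) must be extracted first, via the degree-$2$ characterization of Theorem~\ref{I2=I(2)-char}, before the identification $I(D)=I(G)_w$ is available. This mirrors exactly the strategy the authors use for the dual ideal in Theorem~\ref{J2=J(2)}, where Lemma~\ref{jan4-24} plays the role of the degree-$2$ obstruction and Eq.~\eqref{feb6-24} plays the role of $I(D)=I(G)_w$. The one caveat is that your argument is not independent of the source being cited: Theorem~\ref{I2=I(2)-char} is itself \cite[Theorem~3.2]{weighted-symbolic}, so you are deriving Theorem~3.3 of that paper from its Theorem~3.2 plus the weighting formalism. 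That is not circular, and it is a legitimate and arguably cleaner route than reproving the forward obstruction from scratch, but it means the combinatorial content of the ``only if'' direction (the existence of a witness monomial in $I(D)^{(2)}\setminus I(D)^2$ when some weighted vertex fails to be a sink) is still being taken on faith from the external reference rather than established here.
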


\begin{remark}\label{jan6-24} If $J(D)^2=J(D)^{(2)}$, then
$I(D)^2=I(D)^{(2)}$ (Theorems~\ref{J2=J(2)} and \ref{In=I(n)}) but the
converse does not hold (Example~\ref{converseI2}).
\end{remark}

\begin{corollary}\label{ntf-weighted}  
Let $D$ be a weighted oriented graph and let $G$ be its underlying 
graph. If $I(D)$ has no embedded primes, then the following
conditions are equivalent:
\begin{enumerate}
\item[{\rm (a)}] ${\rm Ass}(J(D)^n)\subset{\rm Ass}(J(D))$ for all $n\geq
1$;
\item[{\rm (b)}] $J(D)^{n}=J(D)^{(n)}$ for all $n\geq 1$;
\item[{\rm (c)}] $I(D)^{n}=I(D)^{(n)}$ for all $n\geq 1$;
\item[{\rm (d)}] Every vertex in $V^{+}(D)$ is a sink and $G$ is
bipartite.
\end{enumerate}
\end{corollary}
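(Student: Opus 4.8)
The plan is to prove the three pairwise equivalences $(c)\Leftrightarrow(d)$, $(b)\Leftrightarrow(d)$ and $(a)\Leftrightarrow(b)$, which together yield the statement. The first of these, $(c)\Leftrightarrow(d)$, is verbatim Theorem~\ref{In=I(n)}, so there is nothing to do there. The remaining work is to link the behaviour of the powers of $J(D)$ (condition $(b)$) and of its associated primes (condition $(a)$) with the combinatorics of $D$ (condition $(d)$), and the main inputs for this are Theorem~\ref{J2=J(2)} (the degree-$2$ classification for $J(D)$), Theorem~\ref{ntf-dual} (the classification of when the squarefree ideal $J(G)$ is Simis), and the standard-weighting machinery of Section~\ref{weighted-section}.

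For $(b)\Leftrightarrow(d)$: the implication $(b)\Rightarrow(d)$ is immediate, since specialising $(b)$ to $n=2$ gives $J(D)^2=J(D)^{(2)}$, and Theorem~\ref{J2=J(2)} then yields precisely conditions (i)--(ii), i.e.\ $(d)$. For $(d)\Rightarrow(b)$ I would re-use the weighting dictionary from the proof of Theorem~\ref{J2=J(2)}: let $w$ be the standard linear weighting of $S$ with $w(t_i)=w_i$. Condition (i) of $(d)$ (every vertex of $V^{+}(D)$ is a sink) forces, edge by edge, $(t_i,t_j)_w=(t_i^{w_i},t_j^{w_j})$ to coincide with the irreducible component of $J(D)$ attached to the edge $\{t_i,t_j\}$, so that $J(G)_w=J(D)$ exactly as in Eq.~\eqref{feb6-24}. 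Condition (ii) of $(d)$, that $G$ is bipartite, gives via Theorem~\ref{ntf-dual} that $J(G)$ is a Simis ideal, and Corollary~\ref{simis-w}(b) then transfers this to $J(G)_w=J(D)$, that is, $J(D)^n=J(D)^{(n)}$ for all $n\geq 1$, which is $(b)$.

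For $(a)\Leftrightarrow(b)$: the key observation is that $J(D)$ has no embedded primes. Indeed, the irreducible decomposition $J(D)=\bigcap_{(t_i,t_j)\in E(D)}(t_i,t_j^{w_j})$ has components whose radicals are the ideals $(t_i,t_j)$, one for each edge of the simple graph $G$; these are pairwise distinct primes, all of height $2$, hence pairwise incomparable, so grouping by radical is already the minimal primary decomposition of $J(D)$ and ${\rm Ass}(J(D))={\rm Min}(J(D))$. Applying Proposition~\ref{ntf-char-coro} to the monomial ideal $J(D)$, the equality $J(D)^n=J(D)^{(n)}$ for all $n\geq 1$ holds if and only if ${\rm Ass}(J(D)^n)\subset{\rm Ass}(J(D))$ for all $n\geq 1$; this is exactly $(a)\Leftrightarrow(b)$. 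Combining the three equivalences gives $(a)\Leftrightarrow(b)\Leftrightarrow(d)\Leftrightarrow(c)$.

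The argument is essentially an assembly of the cited theorems with the weighted-ideal lemmas, so I do not expect a genuine obstacle; the one point that needs care is the verification that $J(D)$ always has no embedded primes, so that Proposition~\ref{ntf-char-coro} can be applied cleanly to $J(D)$, together with the fact that the identity $J(G)_w=J(D)$ hinges precisely on condition (i) of $(d)$. The standing hypothesis that $I(D)$ has no embedded primes is kept in place for uniformity with the other results of this type; note that it is automatically satisfied once $(d)$ holds, since then $I(D)=I(G)_w$ with $I(G)$ squarefree, so it does not restrict the equivalence.
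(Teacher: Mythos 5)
Your proof is correct and follows essentially the same route as the paper: $(a)\Leftrightarrow(b)$ via Proposition~\ref{ntf-char-coro}, $(c)\Leftrightarrow(d)$ via Theorem~\ref{In=I(n)}, $(b)\Rightarrow(d)$ by specialising to $n=2$ and invoking Theorem~\ref{J2=J(2)}, and $(d)\Rightarrow(b)$ via the identification $J(G)_w=J(D)$ of Eq.~\eqref{feb6-24} together with Theorem~\ref{ntf-dual} and Corollary~\ref{simis-w}(b). Your explicit verification that $J(D)$ has no embedded primes (the radicals of its irreducible components are pairwise incomparable height-two primes) is a detail the paper leaves implicit but which is genuinely needed to run Proposition~\ref{ntf-char-coro} in the direction $(a)\Rightarrow(b)$, so it is a welcome addition rather than a deviation.
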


\begin{proof} (a) $\Leftrightarrow$ (b) This follows from
Proposition~\ref{ntf-char-coro}. 

(c) $\Leftrightarrow$ (d) This follows from Theorem~\ref{In=I(n)}.

(b) $\Rightarrow$ (d)  Since $J(D)^2=J(D)^{(2)}$, by Theorem~\ref{J2=J(2)}, we
get that (d) holds. 

(d) $\Rightarrow$ (b) By Theorem~\ref{ntf-dual}, $J(G)$ is Simis, and
since all vertices of $V^+(D)$ are sinks, by Eq.~\eqref{feb6-24},
$J(G)_w=J(D)$. Hence, by Corollary~\ref{simis-w}(b), $J(D)$ is Simis.
\end{proof}

\section{Example}\label{examples-section}

\begin{example}\label{jan21-24} The ideal
$\mathfrak{q}=(t_1^2,t_2^2,t_1t_2,t_1t_3+t_2t_4)$ is a 
$(t_1,t_2)$-primary ideal of the polynomial ring
$S=\mathbb{Q}[t_1,t_2,t_3,t_4]$ but
$t_1t_3+t_2t_4\notin\mathbb{Q}[t_1,t_2]$
(cf.~Proposition~\ref{stanprim-iff}). The ideal $\mathfrak{q}$ is 
the saturation of $J=(t_1^2,t_2^2,t_1t_3+t_2t_4)$ with respect to
$(t_1,t_2)$, that is, $\mathfrak{q}=JS_{(t_1,t_2)}\cap S$. The
saturation of $J$ with respect to $(t_1,t_2)$ was computed 
using Procedure~\ref{procedure2}.
\end{example}

\begin{example} Let $(t_1t_2)$ be the ideal of $K[t_1,t_2]$ generated
by $t_1t_2$ and let $w$ be the weighting function
$w(a_1,a_2)=(a_1+a_2,a_2)$. Then, $(t_1t_2)=(t_1)\cap(t_2)$, $t^{w(e_1)}=t_1$,
$t^{w(e_2)}=t_1t_2$, and 
$$ 
(t_1t_2)_w=(t_1^2t_2)\subsetneq(t_1)_w\cap(t_2)_w=(t_1)\cap(t_1t_2)=(t_1t_2).
$$
\end{example}

\begin{example}\label{ntf-simis-minass} Let $I$ be the ideal of $K[t_1,t_2,t_3]$ generated
by $\mathcal{G}(I)=\{t_1t_2^2,t_2t_3^2,t_3t_1^2\}$. The irreducible
decomposition of $I$ is given by 
$$
I=(t_1^2,\, t_2)\cap(t_1,\, t_3^2)\cap(t_2^2,\, t_3)\cap(t_1^2,\,
t_2^2,\, t_3^2).
$$
\quad Setting 
$t^a=(t_1^2t_2^2t_3^2)(t_2^{n-2}t_3^{2(n-2)})=t_1^2t_2^nt_3^{2(n-1)}$
for all $n\geq 2$ and $\mathfrak{m}=(t_1,t_2,t_3)$, one has $t^a\notin I^n$ and $(I^n\colon
t^a)=\mathfrak{m}$ for all $n\geq 2$, that is,
$$
{\rm Ass}(I^n)=\{(t_1,t_2),\, (t_1,t_3),\, (t_2,t_3),\,
\mathfrak{m}\}\ \mbox{ for all }\ n\geq 1.
$$
\quad By Proposition~\ref{ntf-char-coro}, Simis ideals are normally
torsion-free. The ideal $I$ is normally torsion-free but $I$ is not
a Simis ideal because the $n$-th symbolic power of $I$ is given by
$$
I^{(n)}=(t_1^2,\, t_2)^n\cap(t_1,\, t_3^2)^n\cap(t_2^2,\, t_3)^n
\ \mbox{ for all }\ n\geq 1.
$$ 
\quad The ideal $I$ is not normal and its integral closure is $\overline{I}=I+(t_1t_2t_3)$.
\end{example}

\begin{example}\label{converse-ntf-simis-ass} Let $I\subset
K[t_1,\ldots,t_5]$ be the monomial ideal 
given by 
\begin{align*}
I=&(t_1^2,t_2^2,t_3^2)\cap (t_3^2,t_4^2,t_5^2)\cap
(t_3,t_4^5)\cap(t_1^4,t_2^4,t_3^4,t_4^4,t_5^4)\\
=&(t_3^4, t_3^2t_5^4, t_3^2t_4^4, t_2^4t_3^2, t_1^4t_3^2, t_2^2t_3t_5^4,
t_1^2t_3t_5^4, t_2^4t_3t_5^2, t_1^4t_3t_5^2, t_2^2t_4^5, 
t_1^2t_4^5, t_2^2t_3t_4^4, t_1^2t_3t_4^4, t_2^4t_3t_4^2, t_1^4t_3t_4^2).
\end{align*}
\quad Since $\mathfrak{m}=(t_1,\ldots,t_5)\in{\rm Ass}(I)$, by
Corollary~\ref{feb11-24}, we have
$I^{\langle n\rangle}=I^n$ for all $n\geq 1$. The set of associated
primes of $I^2$ is given by 
$$
{\rm Ass}(I^2)=\{(t_3,t_4),(t_1,t_2,t_3),(t_3,t_4,t_5), (t_1,t_2,t_3,t_4), 
(t_1,t_2,t_3,t_4,t_5)\},
$$
and $I$ is not normally torsion-free because 
$(t_1,t_2,t_3,t_4)\in{\rm Ass}(I^2)\setminus{\rm Ass}(I)$.
\end{example}

\begin{example}\label{generalized-edge-ideal-example}
Let $W=(w_{i,j})$ be an $s\times s$ matrix with non-negative integer 
entries such that $w_{i,j}=0$ if and only if $w_{j,i}=0$. The
\textit{underlying graph} $G_W$ of $W$ has vertex set $t_1,\ldots,t_s$
and $\{t_i,t_j\}$ is an edge of $G_W$ if $w_{i,j}\neq 0$. 
The \textit{generalized edge ideal} $I(W)$ of $W$ is generated by the
set of all $t_i^{w_{i,j}}t_j^{w_{j,i}}$ such that $w_{i,j}\neq 0$. This
ideal was first introduced by Das \cite{Das-K}. 
The ideals $I=(t_1t_2,\, t_1^2t_3,\, t_2^2t_3)$ and 
$L=(t_1^2t_2^3,\, t_1^5t_3^4,\, t_2^7t_3^6)$ are the generalized
edge ideal associated to the matrices:
\begin{align*}
W_1=\left[
\begin{matrix}
0&1&2\cr
1&0&2\cr
1&1&0
\end{matrix}
\right], &\quad\quad\quad
W_2=\left[
\begin{matrix}
0&2&5\cr
3&0&7\cr
4&6&0
\end{matrix}
\right],
\end{align*}
respectively \cite{Das-K}. According to \cite[Theorem~3.10]{Das-K},
these ideals are Simis, that is, $I^n=I^{(n)}$ and $L^n=L^{(n)}$ for
all $n\geq 1$. Using Procedure~\ref{procedure1}, we obtain that 
the irreducible and primary decompositions of the ideals $I$ and $L$ are given by: 
\begin{align*}
I=&(t_1,t_3)\cap(t_1,t_2^2)\cap(t_1^2,t_2)\cap(t_2,t_3)=
(t_1,t_3)\cap(t_1,t_2)^2\cap(t_2,t_3),\\
J=&(t_1^2,t_3^6)\cap(t_1^2,t_2^7)\cap(t_1^5,t_2^3)\cap(t_2^3,t_3^4)=
(t_1^2,t_3^6)\cap(t_1^5,t_1^2t_2^3,t_2^7)\cap(t_2^3,t_3^4).
\end{align*}
\quad In particular $I$ and $L$ have no embedded primes. This example shows that the radical of a Simis ideal is not 
Simis in general and that Conjecture~\ref{conjecture-simis} fails if one drops the
condition that the irreducible decomposition is minimal.
\end{example}

\begin{example} Let $I$ be the ideal $(t_1^2t_2^2,\, t_1t_2t_3,\,
t_2t_3^2,\, t_3^2t_4^2)$. Using \textit{Macaulay}$2$
\cite{mac2}, we get
\begin{align*}
I=(t_1,t_3^2)\cap(t_2,t_3^2)\cap(t_1^2,t_3)\cap(t_2^2,t_3)\cap(t_2,t_4^2).
\end{align*}
\quad In particular ${\rm Ass}(I)=\{(t_1,t_3),(t_2,t_3),(t_2,t_4)\}$
and $I$ has no embedded primes. We verified that $I^{(n)}=I^n$ for
$i=1,\ldots,10$.
\end{example} 

\begin{example}\label{dual-d-simis-ce}
Let $S=K[t_1,\ldots,t_{12}]$ be a polynomial ring and let 
$\mathcal{C}$ be the clutter whose edge ideal is given by
$$
I=I(\mathcal{C})=(t_1t_2t_3t_4t_5,\, t_1t_6t_7t_8t_9,\,
t_2t_6t_{10}t_{11}t_{12}). 
$$
\quad The following are mutually disjoint minimal vertex covers of
$\mathcal{C}$ 
$$
C_1=\{t_1,t_{12}\},\, C_2=\{t_2,t_7\},\, C_3=\{t_3,t_9,t_{11}\},\,
C_4=\{t_4,t_6\},\,
C_5=\{t_5,t_8,t_{10}\},
$$
such that
$V(\mathcal{C})=\{t_i\}_{i=1}^{12}=\bigcup_{i=1}^5C_i$ and every
edge of $\mathcal{C}$ has the form $\{t_{i_1},\ldots,t_{i_5}\}$,
where $t_{i_j}\in C_j$ for all $j$. Using \textit{Macaulay}$2$
\cite{mac2}, we get 
${\rm Ass}(I_c(\mathcal{C})^{(5)})\subsetneq{\rm
Ass}(I_c(\mathcal{C})^{5})$. Thus, $I_c(\mathcal{C})^{(5)}\neq
I_c(\mathcal{C})^{5}$. Setting $d=5$, this proves that the converse 
Theorem~\ref{dual-d-simis} fails. One can also verify that  
$$
{\rm
Ass}(I_c(\mathcal{C}))^{n}=\{\mathfrak{m}\}\cup\{(t_1,t_2,t_3,t_4,t_5),\,
(t_1,t_6,t_7,t_8,t_9),\,
(t_2,t_6,t_{10},t_{11},t_{12})\}
$$
for $n=2,\ldots,5$, where $\mathfrak{m}=(t_1,\ldots,t_{12})$
\end{example}

\begin{example}\label{converseI2} 
Let $G$ be a $5$-cycle with vertices $t_1,\ldots,t_5$. As $G$ has no
triangles, one has $I(G)^2=I(G)^{(2)}$ 
\cite[Theorem~4.13]{symbolic-powers-survey} but $J(G)^2\neq
J(G)^{(2)}$ because $t_1\cdots t_5\in J(G)^{(2)}\setminus J(G)^2$. 
\end{example}

\begin{appendix}

\section{Procedures}\label{Appendix}

\begin{procedure}\label{procedure2}
Computing the saturation of an ideal with respect to a prime ideal, and
checking whether or not the ideal is saturated using \textit{Macaulay}$2$ 
\cite{mac2}. This procedure corresponds to
Example~\ref{jan21-24}. 
\begin{verbatim}
restart
load "SymbolicPowers.m2"
S=QQ[t1,t2,t3,t4]
J=ideal(t1^2,t2^2,t1*t3+t2*t4)
--checks whether or not J is primary 
isPrimary J
--computes the saturation of J^n with respect to 
--an associated prime p of J
h=(n,k)->localize(J^n,(ass(J))#k)
q=h(1,0)
isPrimary q
--computes the saturation of q^n with respect to 
--an associated prime p of q
f=(n,k)->localize(q^n,(ass(q))#k)
f(1,0)
--checks whether or not q is saturated
f(1,0)==q
\end{verbatim}
\end{procedure}

\begin{procedure}\label{procedure1}
Computing the $n$-th symbolic powers $I^{(n)}$ and $I^{\langle
n\rangle}$ of a monomial ideal $I$, its irreducible
decomposition, and a minimal primary decomposition using
\textit{Macaulay}$2$ 
\cite{mac2}. This procedure corresponds to
Example~\ref{generalized-edge-ideal-example}. One can compute other examples by changing the
polynomial ring $S$ and the generators of the ideal $I$.
\begin{verbatim}
restart
load "SymbolicPowers.m2"
S=QQ[t1,t2,t3]
I=monomialIdeal(t1*t2,t1^2*t3,t2^2*t3)
--computes the associated primes of I
ass I
--computes the irreducible decomposition of I
irreducibleDecomposition(I)
--computes a minimal primary decomposition of I
primaryDecomposition I
n=2
--computes I^{<n>} using Ass(I)
symbolicPower(I,n)
--computes I^{(n)} using minAss(I)
symbolicPower(I,n,UseMinimalPrimes=>true)
--checks whether or not equality holds
symbolicPower(I,n)==I^n
apply(1..3,n->symbolicPower(I,n)==I^n)
--checks whether or not equality holds
symbolicPower(I,n,UseMinimalPrimes=>true)==I^n
apply(1..3,n->symbolicPower(I,n,UseMinimalPrimes=>true)==I^n)
mingens(symbolicPower(I,n,UseMinimalPrimes=>true)/I^n)
\end{verbatim}
\end{procedure}

\end{appendix}

\section*{Acknowledgments} 
We thank Mehrdad Nasernejad for valuable comments and corrections 
on a preliminary version of the paper.
We used \textit{Normaliz} \cite{normaliz2} and \textit{Macaulay}$2$
\cite{mac2} to compute integral closures and symbolic powers of
monomial ideals.

\bibliographystyle{plain}

\end{document}